\DeclareFontFamily{U}{shuffle}{}
\DeclareFontShape{U}{shuffle}{m}{n}{ <-8>shuffle7 <8->shuffle10}{}
\newcommand{\ola}{\overleftarrow}
\newcommand{\ora}{\overrightarrow}
\newcommand{\yi}{{1}}
\newcommand\gD{{\Delta}}
\newcommand\eps{{\varepsilon}}
\newcommand{\bfp}{{\bf p}}
\newcommand{\bfq}{{\bf q}}
\newcommand{\bfr}{{\bf r}}
\newcommand{\bfu}{{\bf u}}
\newcommand{\bfv}{{\bf v}}
\newcommand{\bfw}{{\bf w}}
\newcommand{\bfy}{{\bf y}}
\newcommand{\bfe}{{\boldsymbol{\sl{e}}}}
\newcommand{\bfi}{{\boldsymbol{\sl{i}}}}
\newcommand{\bfj}{{\boldsymbol{\sl{j}}}}
\newcommand{\bfk}{{\boldsymbol{\sl{k}}}}
\newcommand{\bfl}{{\boldsymbol{\sl{l}}}}
\newcommand{\bfm}{{\boldsymbol{\sl{m}}}}
\newcommand{\bfx}{{\boldsymbol{\sl{x}}}}
\newcommand{\bfz}{{\boldsymbol{\sl{z}}}}
\newcommand\bfgl{{\boldsymbol \lambda}}
\newcommand\bfet{{\boldsymbol \eta}}
\newcommand\bfeps{{\boldsymbol \varepsilon}}
\newcommand\bfone{{\bf 1}}
\def\int{\displaystyle\!int}
\def\lim{\displaystyle\!lim}
\def\sum{\displaystyle\!sum}
\def\sup{\displaystyle\!sup}
\def\inf{\displaystyle\!inf}
\def\cap{\displaystyle\!cap}
\def\max{\displaystyle\!max}
\def\min{\displaystyle\!min}
\def\frac{\displaystyle\!frac}
\let\oldsection\section
\renewcommand\section{\setcounter{equation}{0}\oldsection}
\DeclareMathOperator*{\dep}{dep}
\DeclareMathOperator{\Li}{Li}
\DeclareMathOperator{\Mi}{Mi}
\DeclareMathOperator{\ti}{ti}
\def\R{\mathbb{R}}
\def\N{\mathbb{N}}
\def\Q{\mathbb{Q}}
\def\CC{\mathbb{C}}
\def\su{\sum\limits_{n=1}^\infty}
\def\ze{\zeta}
\theoremstyle{plain}
\newtheorem{thm}{Theorem}[section]
\newtheorem{lem}[thm]{Lemma}
\newtheorem{cor}[thm]{Corollary}
\newtheorem{pro}[thm]{Proposition}
\theoremstyle{definition}
\newtheorem{defn}{Definition}[section]
\newtheorem{re}[thm]{Remark}
\begin{document}
\title{\bf Explicit Relations of Some Variants of Convoluted Multiple Zeta Values}
\author{
{Ce Xu${}^{a,}$\thanks{Email: cexu2020@ahnu.edu.cn, ORCID 0000-0002-0059-7420.}\ and Jianqiang Zhao${}^{b,}$\thanks{Email: zhaoj@ihes.fr, corresponding author, ORCID 0000-0003-1407-4230.}}\\[1mm]
\small a. School of Mathematics and Statistics, Anhui Normal University, Wuhu 241000, PRC\\
\small b. Department of Mathematics, The Bishop's School, La Jolla, CA 92037, USA}

\date{}
\maketitle

\noindent{\bf Abstract.} Kaneko and Yamamoto introduced a convoluted variant of multiple zeta values (MVZs) around 2016. In this paper, we will first establish some explicit formulas involving these values and their alternating version by using iterated integrals, which enable us to derive some explicit relations of the multiple polylogarithm (MPL) functions. Next, we define convoluted multiple $t$-values and multiple mixed values (MMVs) as level two analogs of convoluted MZVs, and, similar to convoluted MZVs, use iterated integrals to find some relations of these level two analogs. We will then consider the parametric MPLs and the parametric multiple harmonic (star) sums, and extend the Kaneko-Yamamoto's ``integral-series'' identity of MZVs to MPLs and MMVs. Finally, we will study multiple integrals of MPLs and MMVs by generalizing Yamamoto's graphical representations to multiple-labeled posets.

\medskip
\noindent{\bf Keywords}: Multiple zeta values, multiple polylogarithm, multiple mixed values, iterated integrals, integral-series relation, posets.

\noindent{\bf AMS Subject Classifications (2020):} 11M32, 11M99, 11G55, 06A11.

\section{Introduction}

We begin with some basic notations. Let $\N$ be the set of positive integers and $\N_0:=\N\cup \{0\}$.
A finite sequence $\bfk:=(k_1,\ldots, k_r)\in\N^r$ is called a \emph{composition}. We put
\begin{equation*}
 |\bfk|:=k_1+\cdots+k_r,\quad \dep(\bfk):=r,
\end{equation*}
and call them the weight and the depth of $\bfk$, respectively. If $k_1>1$, $\bfk$ is called \emph{admissible}.

For a composition $\bfk=(k_1,\ldots,k_r)$ and positive integer $n$, the \emph{multiple harmonic sums} (MHSs) and \emph{multiple harmonic star sums} (MHSSs) are defined by
\begin{align}
\zeta_n(\bfk):=\sum\limits_{n\geq n_1>\cdots>n_r>0 } \frac{1}{n_1^{k_1}\cdots n_r^{k_r}}\quad
\text{and}\quad
\zeta^\star_n(\bfk):=\sum\limits_{n\geq n_1\geq\cdots\geq n_r>0} \frac{1}{n_1^{k_1}\cdots n_r^{k_r}}\label{MHSs+MHSSs},
\end{align}
respectively. If $n<k$ then ${\zeta_n}(\bfk):=0$ and ${\zeta _n}(\emptyset )={\zeta^\star _n}(\emptyset ):=1$. When taking the limit $n\rightarrow \infty$ in \eqref{MHSs+MHSSs} we get the so-called the \emph{multiple zeta values} (MZVs) and the \emph{multiple zeta star values} (MZSVs), respectively
\begin{align*}
{\zeta}( \bfk):=\lim_{n\rightarrow \infty}{\zeta _n}(\bfk), \quad
\text{and}\quad
{\zeta^\star}( \bfk):=\lim_{n\rightarrow \infty}{\zeta^\star_n}( \bfk),
\end{align*}
defined for an admissible composition  $\bfk$ to ensure convergence of the series.

In general, let $\bfk=(k_1,\ldots,k_r)\in\N^r$ and $\bfz=(z_1,\dotsc,z_r)$ where $z_1,\dotsc,z_r$ are $N$th roots of unity. We can define the colored MZVs of level $N$ as
\begin{equation}\label{equ:defnMPL}
\Li_{\bfk}(\bfz):=\sum_{n_1>\cdots>n_r>0}
\frac{z_1^{n_1}\dots z_r^{n_r}}{n_1^{k_1} \dots n_r^{k_r}},
\end{equation}
which converges if $(k_1,z_1)\ne (1,1)$ (see \cite{YuanZh2014a} and \cite[Ch. 15]{Zhao2016}), in which case we call $({\bfk};\bfz)$ \emph{admissible}. The level two colored MZVs are often called Euler sums or alternating MZVs. In this case, namely,
when $(z_1,\dotsc,z_r)\in\{\pm 1\}^r$ and $(k_1,z_1)\ne (1,1)$, we set
$\ze(\bfk;\bfz)= \Li_\bfk(\bfz)$. Further, we put a bar on top of
$k_{j}$ if $z_{j}=-1$. For example,
\begin{equation*}
\ze(\bar2,3,\bar1,4)=\ze(2,3,1,4;-1,1,-1,1).
\end{equation*}
More generally, let $\bfk=(k_1,\ldots,k_r)\in\N^r$ and $\bfx=(x_1,\dotsc,x_r)$ where $x_1,\dotsc,x_r$ are complex variables.
The classical \emph{multiple polylogarithm} (MPL) and \emph{multiple polylogarithm star function}
with $r$ variables are defined by
\begin{align}
\Li_{\bfk}(\bfx):=\sum_{n_1>n_2>\cdots>n_r>0} \frac{x_1^{n_1}\dotsm x_r^{n_r}}{n_1^{k_1}\dotsm n_r^{k_r}}\quad
\text{and}\quad
\Li^\star_{\bfk}(\bfx):=\sum_{n_1\geq n_2\geq \cdots\geq n_r>0} \frac{x_1^{n_1}\dotsm x_r^{n_r}}{n_1^{k_1}\dotsm n_r^{k_r}},
\end{align}
respectively,
which converge if $|x_1\cdots x_{j}|<1$ for all $j=1,\dotsc,r$.
They can be analytically continued to a multi-valued meromorphic function on $\mathbb{C}^r$ (see \cite{Zhao2007d}). In particular, if $x_1=x,x_2=\cdots=x_r=1$, then $\Li_{k_1,\ldots,k_r}(x,\yi_{r-1})$ is the classical MPL with single-variable. As a convention, we denote by $\yi_d$ the sequence of 1's with $d$ repetitions.

The systematic study of MZVs began in the early 1990s with the works of Hoffman \cite{H1992} and Zagier \cite{DZ1994}. For an admissible index  $\bfk$, Hoffman called ${\zeta}( \bfk)$ multiple harmonic series while Zagier called ${\zeta}( \bfk)$ multiple zeta values since when $r=1$ they become Riemann zeta values $\zeta(k)$. Since then this area of research has attracted a lot of attentions due to their close connections with many other branches of mathematics and theoretical physics (see, for example, the book of the second author \cite{Zhao2016}).

We now give some additional definitions.

\begin{defn}
Let $\gD=\{z\in\CC:|z|\le 1\}$ be the unit disk.
Let ${\bfk}=(k_1,\ldots,k_r)\in\N^r$ and $\bfx=(x_1,\ldots,x_r)\in \gD^r$. We define the \emph{multiple harmonic sum} and \emph{multiple harmonic star sum} with $r$-variable by
\begin{align*}
\zeta_n(\bfk;\bfx):=\sum\limits_{n\geq n_1>\dotsm> n_r\geq 1}\frac{x_1^{n_1}\cdots x_r^{n_r}}{n_1^{k_1}\cdots n_r^{k_r}},\quad
\text{and}\quad
\zeta_n^\star(\bfk;\bfx):=\sum\limits_{n\geq n_1\geq\dotsm \geq n_r\geq 1}\frac{x_1^{n_1}\cdots x_r^{n_r}}{n_1^{k_1}\cdots n_r^{k_r}},
\end{align*}
respectively, where if $n<k$ then ${\zeta_n}(\bfk;\bfx):=0$ and $\ze_n(\emptyset;\emptyset)=\ze^\star_n(\emptyset;\emptyset):=1$.
\end{defn}

\begin{defn} (cf. \cite{KanekoYa2018})
For any two compositions of positive integers $\bfk=(k_1,\dotsc,k_r)$ and $\bfl=(l_1,\dotsc,l_s)$, define the Kaneko--Yamamoto
\emph{convoluted multiple zeta values} by
\begin{align}\label{equ:KYMZVs}
\zeta(\bfk\circledast\bfl^\star)&:=\sum\limits_{0<m_r<\cdots<m_1=n_1\geq \cdots \geq n_s>0}
\prod_{i=1}^r \frac{1}{m_i^{k_i}} \prod_{j=1}^s \frac{1}{n_{j}^{l_{j}}}
=\sum\limits_{n=1}^\infty \frac{\zeta_{n-1}(k_2,\ldots,k_r)\zeta^\star_n(l_2,\ldots,l_s)}{n^{k_1+l_1}}.
\end{align}
According the \cite[Thm. 4.1]{KanekoYa2018}, the convoluted MZVs can all be expressed as $\Q$-linear combinations of MZVs.
\end{defn}

The primary goals of this paper are to study the explicit relations of Kaneko-Yamamoto type convoluted MZVs and their related variants. It is not hard to see that the convoluted values of each type can be expressed as $\Q$-linear
combinations of values of the corresponding type by using the idea of posets as in Kaneko-Yamamoto's original work \cite{KanekoYa2018}
or by considering the more general form of Schur type as explained in our previous work \cite{XuZhao2020b}.

The remainder of this paper is organized as follows.

In section \ref{SEFIT}, we use the iterated integrals to establish several explicit formulas of MPLs
and multiple harmonic star sums with multi-variable.

In section \ref{SRCVMZV}, we apply the iterated integral identities obtained in Section \ref{SEFIT} to establish some explicit evaluations of convoluted MZVs, which in turn result in relations between MPLs and convoluted MZVs. Further, we also consider the convoluted multiple $t$-values and multiple mixed values by using iterated integrals.

We then define the parametric MPL (star) function and parametric multiple harmonic (star) sum in section \ref{SFPMPLS}, and extend some results of Sakugawa-Seki \cite[Thms. 2.10 and 2.13]{SS2016}. Finally, in section \ref{sec:mulInt}, we use the method of multiple-labeled posets to obtain the ``integral-series" identities of MPLs and multiple mixed values.

\section{Formulas of Some Iterated Integrals}\label{SEFIT}
The theory iterated integrals was developed first by K.T. Chen in the 1960's \cite{KTChen1971,KTChen1977}. It has played important roles in the study of algebraic topology and algebraic geometry in past half century. Its simplest form over $\R$ is
$$
\int_{a}^b f_p(t)dtf_{p-1}(t)dt\cdots f_1(t)dt:=\int\limits_{a<t_p<\cdots<t_1<b}f_p(t_p)f_{p-1}(t_{p-1})\cdots f_1(t_1)dt_1dt_2\cdots dt_p
$$
which can be easily extended to iterated path integrals over $\CC$.
Let $\bfk=(k_1,\ldots,k_r)\in\N^r$. From \cite[Eqs. (3.1) and (3.2)]{XuZhao2020b}, we have
\begin{align}\label{equ:glInteratedInt}
\Li_{\bfk}\left(x_1,\frac{x_2}{x_1},\dotsc,\frac{x_r}{x_{r-1}}\right)= \int_0^1 \left(\frac{x_r\, dt}{1-x_rt}\right)\left(\frac{dt}{t}\right)^{k_r-1}\cdots
\left(\frac{x_1\, dt}{1-x_1 t}\right)\left(\frac{dt}{t}\right)^{k_1-1}.
\end{align}
If changing $x_{j}$ to $xx_{j}$, then
\begin{align}\label{MPL-II-1}
&\Li_\bfk\left(xx_1,\frac{x_2}{x_1},\dotsc,\frac{x_r}{x_{r-1}}\right)= \int_0^x \left(\frac{x_r\,dt}{1-x_rt}\right)\left(\frac{dt}{t}\right)^{k_r-1}\cdots
\left(\frac{x_1dt}{1-x_1t}\right)\left(\frac{dt}{t}\right)^{k_1-1}.
\end{align}
Setting $x_1=\cdots=x_r=1$ in \eqref{MPL-II-1} we find that the single-variable MPL
\begin{align}\label{MPL-II-2}
&\Li_\bfk(x)= \int_0^x \left(\frac{dt}{1-t}\right)\left(\frac{dt}{t}\right)^{k_r-1}\cdots
\left(\frac{dt}{1-t}\right)\left(\frac{dt}{t}\right)^{k_1-1}.
\end{align}

To save space, for any composition $\bfk=(k_1,\dotsc,k_p)\in\N^p$ and $i,j\in\N$, we put
\begin{align*}
&\ora\bfk_{\hskip-2pt i,j}:=
\left\{
  \begin{array}{ll}
    (k_i,\ldots,k_{j}), \quad \ & \hbox{if $i\le j\le p$;} \\
    \emptyset, & \hbox{if $i>j$,}
  \end{array}
\right.
 \quad &\ola\bfk_{\hskip-2pt i,j}:=
\left\{
  \begin{array}{ll}
     (k_{j},\ldots,k_i), \quad\ & \hbox{if $i\le j\le p$;} \\
     \emptyset, & \hbox{if $i>j$.}
  \end{array}
\right.
\end{align*}
Set $\ora\bfk_{\hskip-2pt i}=\ora\bfk_{\hskip-2pt 1,i}=(k_1,\ldots,k_i)$ and $\ola\bfk_{\hskip-2pt i}=\ola\bfk_{\hskip-2pt i,p}=(k_p,\ldots,k_i)$ for all $1\le i\le p$.

\begin{thm}\label{thm-ItI1}
For any composition $\bfm=(m_1,\ldots,m_p)\in\N^p$, $n\in\N$, and $|x|\le 1$, we have
\begin{align}\label{FII1}
n\int_0^x t^{n-1}dt \frac{dt}{1-t} \left(\frac{dt}{t}\right)^{m_1-1}\cdots \frac{dt}{1-t}\left(\frac{dt}{t}\right)^{m_p-1}
=(-1)^p \ze^\star_n(\bfm;x)-\sum_{j=1}^{p}(-1)^{j} \ze^\star_n(\ora\bfm_{\hskip-2pt j-1})\Li_{\ola\bfm_{\hskip-1pt j}}(x).
\end{align}
where
\begin{equation*}
 \ze^\star_n(\bfm;x)= \ze^\star_n(\bfm;1_{p-1},x).
\end{equation*}
\end{thm}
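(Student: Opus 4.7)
The plan is to prove \eqref{FII1} by induction on $p=\dep(\bfm)$, anchored by a single-step recursion that drops the depth by one. Write the left-hand side as $I(n;\bfm;x)$, and adopt the convention $I(n;\emptyset;x):=x^n$, consistent with $n\int_0^x t^{n-1}\,dt=x^n$. The recursion I would prove is
$$I(n;\bfm;x)=\Li_{\ola\bfm_1}(x)-\sum_{j=1}^n\frac{I(j;\bfm';x)}{j^{m_1}},\qquad \bfm':=(m_2,\ldots,m_p).$$

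To derive this recursion, I first perform the innermost integration, using $n\int_0^u t^{n-1}\,dt=u^n$, to rewrite $I(n;\bfm;x)$ as $\int_0^x\frac{t^n\,dt}{1-t}\cdot(dt/t)^{m_1-1}\frac{dt}{1-t}(dt/t)^{m_2-1}\cdots$. Then I apply the elementary splitting $\frac{t^n}{1-t}=\frac{1}{1-t}-\sum_{k=0}^{n-1}t^k$: the $\frac{1}{1-t}$ piece produces $\Li_{\ola\bfm_1}(x)$ by \eqref{MPL-II-2}, while each $t^k$ gives $\int_0^x t^k\,dt\cdot(dt/t)^{m_1-1}\frac{dt}{1-t}\cdots$. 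Using the auxiliary identity $\int_0^x t^k\,dt\cdot\frac{dt}{t}\cdot\omega=\frac{1}{k+1}\int_0^x t^k\,dt\cdot\omega$, obtained by computing the innermost Riemann integral $\int_0^{t_2}t_1^k\,dt_1=t_2^{k+1}/(k+1)$ and cancelling the $1/t_2$, I absorb the $m_1-1$ factors of $dt/t$ to reach $\frac{1}{(k+1)^{m_1-1}}\int_0^x t^k\,dt\cdot\frac{dt}{1-t}(dt/t)^{m_2-1}\cdots$. Inserting a factor $(k+1)/(k+1)$ identifies this as $I(k+1;\bfm';x)/(k+1)^{m_1}$; summing and reindexing $j=k+1$ yields the recursion.

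For the induction itself, the base case $p=1$ follows directly from the recursion: $I(n;(m_1);x)=\Li_{m_1}(x)-\sum_{j=1}^n x^j/j^{m_1}=\Li_{m_1}(x)-\zeta^\star_n(m_1;x)$, matching \eqref{FII1}. For the inductive step, substitute the induction hypothesis for $I(j;\bfm';x)$ into the recursion. Each resulting outer sum combines with the defining sums of $\zeta^\star_j$ via the elementary identity
$$\sum_{j=1}^n\frac{\zeta^\star_j((m_2,\ldots,m_i))}{j^{m_1}}=\zeta^\star_n(\ora\bfm_i),$$
obtained by setting $a_1=j$ in the series definition; likewise $\sum_{j=1}^n\zeta^\star_j(\bfm';x)/j^{m_1}=\zeta^\star_n(\bfm;x)$. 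After identifying $\ola{(\bfm')}_i=\ola\bfm_{i+1}$, reindexing $j=i+1$, and absorbing $\Li_{\ola\bfm_1}(x)$ as the $j=1$ contribution in the sum on the right-hand side, the desired identity \eqref{FII1} reassembles exactly.

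The main obstacle is the bookkeeping of indices in the inductive step — making sure the signs $(-1)^{p-1}$ and $(-1)^i$ supplied by the induction hypothesis, together with the extra $-1$ coming from the recursion, combine under the reindexing $j=i+1$ to produce the signs $(-1)^p$ and $(-1)^j$ appearing in \eqref{FII1}, and that the truncations $\ora{(\bfm')}_{i-1}$ and $\ola{(\bfm')}_i$ of the shorter composition $\bfm'$ align properly with the truncations $\ora\bfm_i$ and $\ola\bfm_{i+1}$ of $\bfm$. Once these are straightened out, the remaining algebra is routine.
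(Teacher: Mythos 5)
Your proposal is correct and follows essentially the same route as the paper: computing the iterated integral from the left, splitting $\frac{t^n}{1-t}=\frac{1}{1-t}-\frac{1-t^n}{1-t}$ to obtain the recurrence $I(n;\bfm;x)=\Li_{\ola\bfm}(x)-\sum_{j=1}^n j^{-m_1}I(j;\bfm';x)$, and then iterating; the paper simply says ``repeatedly apply the recurrence'' where you carry out the induction on $p$ (with the correct sign and index bookkeeping) explicitly.
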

\begin{proof}
We proceed by computing the multiple integral on the left-hand side of \eqref{FII1} as a repeated integral ``from left to right". Applying \eqref{MPL-II-2}, we have
\begin{align*}
&n\int_0^x t^{n-1}dt \frac{dt}{1-t} \left(\frac{dt}{t}\right)^{m_1-1}\cdots \frac{dt}{1-t}\left(\frac{dt}{t}\right)^{m_p-1}\\
&=\int_0^x \frac{t^ndt}{1-t} \left(\frac{dt}{t}\right)^{m_1-1}\cdots \frac{dt}{1-t}\left(\frac{dt}{t}\right)^{m_p-1}\\
&=\int_0^x \left(\frac{dt}{1-t}-\frac{(1-t^n)dt}{1-t}\right)\left(\frac{dt}{t}\right)^{m_1-1}\cdots \frac{dt}{1-t}\left(\frac{dt}{t}\right)^{m_p-1} \\
&=\Li_{\ola\bfm}(x)-\sum_{n_1=1}^n \int_0^x t^{n_1-1}dt \left(\frac{dt}{t}\right)^{m_1-1}\cdots \frac{dt}{1-t}\left(\frac{dt}{t}\right)^{m_p-1}\\
&=\Li_{\ola\bfm}(x)-\sum_{n_1=1}^n \frac1{n_1^{m_1}}\int_0^x \frac{t^{n_1}dt}{1-t} \left(\frac{dt}{t}\right)^{m_2-1}\cdots \frac{dt}{1-t}\left(\frac{dt}{t}\right)^{m_p-1}.
\end{align*}
We can now obtain the desired evaluation by repeatedly applying the above recurrence.
\end{proof}

\begin{thm}\label{thm-ItI2}
Suppose $\bfm=(m_1,\ldots,m_p)\in\N^p$, $n\in\N$, $|x|\le 1$ and $0<|\sigma_{j}|<1$ $(1\le j\le p)$.
Setting $\sigma_{p+1}=1/x$, we have
\begin{multline}\label{FII2}
n\int_0^x t^{n-1}dt \frac{dt}{1-\sigma_1t} \left(\frac{dt}{t}\right)^{m_1-1}\cdots \frac{dt}{1-\sigma_pt}\left(\frac{dt}{t}\right)^{m_p-1}\\
=\sum_{j=0}^{p}(-1)^{j} \frac{\ze^\star_n\left(\ora\bfm_{\hskip-2pt j};\frac{\sigma_1}{\sigma_2},\ldots,\frac{\sigma_{j}}{\sigma_{j+1}}\right)
    \Li_{\ola\bfm_{\hskip-2pt j+1}}\left(\sigma_px,\frac{\sigma_{p-1}}{\sigma_p},\ldots,\frac{\sigma_{j+1}}{\sigma_{j+2}}\right)}
{\sigma_1^{n+1}\sigma_2\cdots \sigma_p}.
\end{multline}
\end{thm}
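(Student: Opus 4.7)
The plan is to imitate the proof of Theorem \ref{thm-ItI1} essentially verbatim, but with the identity $\frac{t^n}{1-t}=\frac{1}{1-t}-\sum_{n_1=1}^n t^{n_1-1}$ replaced by its one-parameter analog
\begin{equation*}
\frac{t^n}{1-\sigma t}\;=\;\frac{1}{\sigma^n(1-\sigma t)}\;-\;\frac{1}{\sigma^n}\sum_{k=1}^n(\sigma t)^{k-1}.
\end{equation*}
After the innermost integration $n\int_0^{t_*}\tau^{n-1}d\tau=t_*^n$, the left-hand side becomes $\int_0^x \frac{t^n dt}{1-\sigma_1 t}(dt/t)^{m_1-1}\cdots\frac{dt}{1-\sigma_p t}(dt/t)^{m_p-1}$, and the above identity splits this into two pieces.

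The ``first piece'' $\sigma_1^{-n}\int_0^x \frac{dt}{1-\sigma_1 t}(dt/t)^{m_1-1}\cdots\frac{dt}{1-\sigma_p t}(dt/t)^{m_p-1}$, after pulling the scalars $\sigma_1,\dotsc,\sigma_p$ out of the $\frac{dt}{1-\sigma_j t}$ forms, matches formula (\ref{MPL-II-1}) and equals the $j=0$ summand $\frac{\Li_{\ola\bfm_1}(\sigma_p x,\sigma_{p-1}/\sigma_p,\dotsc,\sigma_1/\sigma_2)}{\sigma_1^{n+1}\sigma_2\cdots\sigma_p}$ on the right-hand side of (\ref{FII2}). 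The ``second piece'' is $-\sigma_1^{-n}\sum_{n_1=1}^n \sigma_1^{n_1-1}\int_0^x t^{n_1-1}dt(dt/t)^{m_1-1}\frac{dt}{1-\sigma_2 t}\cdots$, where the elementary block evaluation
\begin{equation*}
\int_0^x t^{n_1-1}dt\left(\frac{dt}{t}\right)^{m_1-1}\frac{dt}{1-\sigma_2 t}\cdots \;=\; \frac{1}{n_1^{m_1}}\int_0^x \frac{t^{n_1}dt}{1-\sigma_2 t}(dt/t)^{m_2-1}\cdots
\end{equation*}
puts the remainder into exactly the same shape as the original integral, but with $n,\sigma_1$ replaced by $n_1,\sigma_2$.

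Iterating: for $k=1,2,\dotsc,p$, the $k$-th application of the trick to $\frac{t^{n_{k-1}}}{1-\sigma_k t}$ produces a ``first piece'' which, once the accumulated sums are read as a truncated MHSS $\ze^\star_n(\ora\bfm_{k-1};\sigma_1/\sigma_2,\dotsc,\sigma_{k-1}/\sigma_k)$ (with convention $\ora\bfm_0=\emptyset$) and the tail integral is rewritten via (\ref{MPL-II-1}) as $\Li_{\ola\bfm_k}(\sigma_p x,\dotsc,\sigma_k/\sigma_{k+1})/(\sigma_k\sigma_{k+1}\cdots\sigma_p)$, is precisely the $(k-1)$-st summand of (\ref{FII2}) with its sign $(-1)^{k-1}$. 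After $p$ iterations the remaining second piece no longer contains any $\frac{dt}{1-\sigma t}$-form; evaluating it by $\int_0^x t^{n_p-1}dt(dt/t)^{m_p-1}=x^{n_p}/n_p^{m_p}$ and using the convention $\sigma_{p+1}=1/x$ to rewrite $\sigma_p^{n_p}x^{n_p}=(\sigma_p/\sigma_{p+1})^{n_p}$ yields the remaining $j=p$ term (with the empty MPL $\Li_{\ola\bfm_{p+1}}=1$).

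The main obstacle is purely bookkeeping: one must carefully track the accumulated sign $(-1)^k$, the denominator $\sigma_1^{n+1}\sigma_2\cdots\sigma_k$, and the ``rebasing'' $\sigma_k^{n_k}=(\sigma_k/\sigma_{k+1})^{n_k}\cdot\sigma_{k+1}^{n_k}$ that converts each inner weight into the MHSS argument $\sigma_k/\sigma_{k+1}$ while the extra $\sigma_{k+1}^{n_k}$ is absorbed at the next level. Since exactly the same one-step recurrence is being applied over and over, the cleanest exposition is to state it explicitly once and then invoke ``repeated application'', just as in the proof of Theorem \ref{thm-ItI1}.
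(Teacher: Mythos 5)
Your proposal is correct and is exactly the argument the paper intends: the paper omits the proof, saying it is ``completely similar'' to that of Theorem \ref{thm-ItI1}, and your twisted splitting $\frac{t^n}{1-\sigma t}=\frac{1}{\sigma^n(1-\sigma t)}-\frac{1}{\sigma^n}\sum_{k=1}^n(\sigma t)^{k-1}$ together with the rebasing $\sigma_k^{n_k}=(\sigma_k/\sigma_{k+1})^{n_k}\sigma_{k+1}^{n_k}$ is precisely the required adaptation of that left-to-right recurrence, reproducing each summand of \eqref{FII2} with the correct sign and the common denominator $\sigma_1^{n+1}\sigma_2\cdots\sigma_p$.
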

\begin{proof}
The proof is completely similar to the proof of Theorem \ref{thm-ItI1} and is thus omitted.
\end{proof}

It is clear that if all $\sigma_{j}=1$ then Theorem \ref{thm-ItI2} becomes Theorem \ref{thm-ItI1}.

\begin{thm}
For any $(m_1,\ldots,m_p)\in \N\times \N_0^{p-1}$, $a_{j}\ne 1$ $(1\le j< p)$, and $|a_p|\le 1$,
\begin{align}\label{Int-G}
&n(-1)^p\int_{a_p}^1 \left(\frac{dt}{1-t}\right)^{m_p}\frac{dt}{a_{p-1}-t}\left(\frac{dt}{1-t}\right)^{m_{p-1}}\cdots \frac{dt}{a_1-t}\left(\frac{dt}{1-t}\right)^{m_1}t^{n-1}dt\nonumber\\
&=\sum_{j=1}^p (-1)^j \Li_{m_p+1,\ldots,m_{j+1}+1}\left(\frac{1-a_p}{1-a_{p-1}},\frac{1-a_{p-1}}{1-a_{p-2}},\ldots,\frac{1-a_{j+1}}{1-a_{j}} \right)\nonumber\\
&\quad\quad\quad\quad\quad\times \sum_{1\leq k_{|\widehat{\bfm}|_{j}}\leq \cdots \leq k_2\leq k_1\leq n} \frac{a_1^{k_{m_1}-k_{m_1+1}}\cdots a_{j-1}^{k_{|\widehat\bfm|_{j-1}}-k_{|\widehat\bfm|_{j-1}+1}}}{k_1k_2\ldots k_{|\widehat{m}|_{j}}} \left(1-a_{j}^{k_{|\widehat\bfm|_{j}}} \right).
\end{align}
where  and $|\widehat\bfm|_{j}:=m_1+\cdots+m_{j}+j-1$. In particular, $0^0$ should be interpreted as $1$ wherever it occurs. Here, when $j=1$ the numerator in the second sum is equal to $1$.
\end{thm}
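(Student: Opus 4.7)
The plan is to extend the ``right-to-left'' iterated-integration method used for Theorems \ref{thm-ItI1} and \ref{thm-ItI2} by first making the substitution $u=1-t$. Under this substitution $\frac{dt}{1-t}$ becomes $-\frac{du}{u}$, $\frac{dt}{a_j-t}$ becomes $\frac{du}{(1-a_j)-u}$, and $t^{n-1}dt$ becomes $-(1-u)^{n-1}du$, while the integration range becomes $[0,1-a_p]$. Tracking all the signs (there are $|\bfm|+1$ negative pullbacks together with an overall $(-1)^N$ from reversing the path, where $N=|\bfm|+p$), one shows that the left-hand side of \eqref{Int-G} equals
\begin{equation*}
-n\int_0^{1-a_p}(1-u)^{n-1}du\left(\frac{du}{u}\right)^{m_1}\frac{du}{(1-a_1)-u}\left(\frac{du}{u}\right)^{m_2}\cdots\left(\frac{du}{u}\right)^{m_p},
\end{equation*}
so that $(1-u)^{n-1}$ now sits at the innermost (leftmost) position, ready to be integrated first.

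As in the proof of Theorem \ref{thm-ItI1}, integrating the innermost block $(1-u)^{n-1}du\cdot(du/u)^{m_1}$ by repeated use of the geometric-series identity $\frac{1-(1-u)^k}{u}=\sum_{l=0}^{k-1}(1-u)^l$ produces, at the variable $u'$ attached to the form $\frac{du}{(1-a_1)-u}$, the expression
\begin{equation*}
\frac{1}{n}\sum_{1\le k_{m_1}\le\cdots\le k_1\le n}\frac{1-(1-u')^{k_{m_1}}}{k_1\cdots k_{m_1}}.
\end{equation*}
The central algebraic tool for crossing a form $\frac{du}{(1-a_j)-u}$ is the partial-fraction identity
\begin{equation*}
\frac{1-(1-u)^k}{(1-a_j)-u}=\frac{1-a_j^k}{(1-a_j)-u}-\sum_{l=0}^{k-1}a_j^{k-1-l}(1-u)^l,
\end{equation*}
which follows from the substitution $w=1-u$ and the standard factorization $(w^k-a_j^k)/(w-a_j)=\sum_{l=0}^{k-1}w^l a_j^{k-1-l}$.

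This identity splits the integrand at each junction $\frac{du}{(1-a_j)-u}$ into a ``stopping'' term and a ``continuing'' polynomial. The stopping term $\frac{(1-a_j^k)du}{(1-a_j)-u}$, combined with all the outer forms $(du/u)^{m_{j+1}}\frac{du}{(1-a_{j+1})-u}\cdots(du/u)^{m_p}$ and integrated up to $1-a_p$, yields (after the global rescaling $v=u/(1-a_p)$, which produces the telescoping constants $(1-a_p)/(1-a_i)$) the MPL $\Li_{m_p+1,\ldots,m_{j+1}+1}\bigl(\frac{1-a_p}{1-a_{p-1}},\ldots,\frac{1-a_{j+1}}{1-a_j}\bigr)$ via representation \eqref{equ:glInteratedInt}, multiplied by the factor $(1-a_j^{k_{|\widehat\bfm|_j}})$; this is precisely the $j$-th term of the right-hand side. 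The continuing polynomial $\sum_{l=0}^{k-1}a_j^{k-1-l}(1-u)^l$ restores a factor $(1-u)^l$ at the innermost position, so the argument recurses with $l$ in place of $n-1$ and with the block $(du/u)^{m_{j+1}}$ replacing $(du/u)^{m_1}$. Iterating over $j=1,\ldots,p$ (with $j=p$ corresponding to the terminal integration on $[0,1-a_p]$, whose evaluation produces the factor $1-a_p^{k_{|\widehat\bfm|_p}}$ directly) assembles the full RHS.

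The chief obstacle is the combinatorial bookkeeping: one must check that each recursive pass yields exactly the constraint $1\le k_{|\widehat\bfm|_j}\le\cdots\le k_1\le n$ together with the ``consecutive-difference'' prefactor $a_1^{k_{m_1}-k_{m_1+1}}\cdots a_{j-1}^{k_{|\widehat\bfm|_{j-1}}-k_{|\widehat\bfm|_{j-1}+1}}$. These exponents emerge because each continuing step at junction $i$ contributes $a_i^{k-1-l}$; after the next block is processed by the geometric-series trick, the auxiliary index $l+1$ gets relabeled as the fresh summation variable $k_{|\widehat\bfm|_i+1}$, so $a_i^{k-1-l}$ becomes $a_i^{k_{|\widehat\bfm|_i}-k_{|\widehat\bfm|_i+1}}$. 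A short arithmetic check against $|\widehat\bfm|_j=m_1+\cdots+m_j+j-1$ confirms that the indexing aligns throughout, and the convention $0^0=1$ covers the boundary case when some $m_j=0$ makes an exponent vanish.
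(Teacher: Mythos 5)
Your proof is correct and is essentially the paper's own argument: the paper likewise evaluates the integral recursively from the end nearest $1$, uses the geometric-series expansion of $\frac{1-t^k}{1-t}$, splits $1-t^{k}=(1-a_j^{k})+(a_j^{k}-t^{k})$ at each junction (your partial-fraction identity is exactly this split after $t=1-u$), and identifies the stopping term with the MPL via the same change of variables that you apply globally (the paper applies it only inside \eqref{MPL-II-1} to get \eqref{Int-G-Prof}). The only difference is cosmetic — working in the variable $u=1-t$ throughout rather than in $t$ — so the two derivations coincide step for step.
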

\begin{proof}
The proof is done straightforwardly by computing the multiple integral on the left-hand side of \eqref{Int-G} as a repeated integral ``from right to left". First, setting $r=p-1$ in \eqref{MPL-II-1} we have
\begin{align*}
&\Li_{\bfk}\left(xx_1,\frac{x_2}{x_1},\dotsc,\frac{x_{p-1}}{x_{p-2}}\right)= \int_0^x \left(\frac{\, dt}{x_{p-1}^{-1}-t}\right)\left(\frac{dt}{t}\right)^{k_{p-1}-1}\cdots
\left(\frac{\, dt}{x_1^{-1}-t}\right)\left(\frac{dt}{t}\right)^{k_1-1}\\
&=\int_{1-x}^1 \left(\frac{dt}{1-t}\right)^{k_1-1}\frac{dt}{x_1^{-1}-1+t}\cdots\left(\frac{dt}{1-t}\right)^{k_{p-1}-1}\frac{dt}{x_{p-1}^{-1}-1+t}.
\end{align*}
Further, setting $x=1-a_p$, $x_{j}=(1-a_{p-j})^{-1}$ and $k_{j}=m_{p+1-j}$ $(1\le j<p)$ we have
\begin{align}\label{Int-G-Prof}
&-\Li_{\ola\bfm}\left( \frac{1-a_p}{1-a_{p-1}},\frac{1-a_{p-1}}{1-a_{p-2}},\ldots, \frac{1-a_2}{1-a_1} \right)\nonumber\\
&=(-1)^p\int_{a_p}^1 \left(\frac{dt}{1-t}\right)^{m_p-1}\frac{dt}{a_{p-1}-t}\cdots \left(\frac{dt}{1-t}\right)^{m_2-1}\frac{dt}{a_1-t}.
\end{align}
Noting that
\[\frac{a^m-t^m}{a-t}=\sum_{k=1}^m a^{m-k}t^{k-1}\]
we get
\begin{align*}
&n(-1)^p\int_{a_p}^1 \left(\frac{dt}{1-t}\right)^{m_p}\frac{dt}{a_{p-1}-t}\cdots \left(\frac{dt}{1-t}\right)^{m_2}\frac{dt}{a_1-t}\left(\frac{dt}{1-t}\right)^{m_1}t^{n-1}dt\nonumber\\
&=(-1)^p\int_{a_p}^1 \left(\frac{dt}{1-t}\right)^{m_p}\frac{dt}{a_{p-1}-t}\cdots \left(\frac{dt}{1-t}\right)^{m_2}\frac{dt}{a_1-t}\left(\frac{dt}{1-t}\right)^{m_1-1}\frac{1-t^n}{1-t}dt\nonumber\\
&=(-1)^p\sum_{1\leq k_{m_1}\leq \cdots \leq k_1\leq n} \frac{1}{k_1\cdots k_{m_1}}\int_{a_p}^1\left(\frac{dt}{1-t}\right)^{m_p}\frac{dt}{a_{p-1}-t}\cdots \frac{dt}{a_2-t}\left(\frac{dt}{1-t}\right)^{m_2}\frac{1-t^{k_{m_1}}}{a_1-t}dt.
\end{align*}
Writing $1-t^{k_{m_1}}=1-a_1^{k_{m_1}}+a_1^{k_{m_1}}-t^{k_{m_1}}$ and applying \eqref{Int-G-Prof}
we see that the above is equal to
\begin{align*}
&-\Li_{m_p+1,\ldots,m_2+1} \left( \frac{1-a_p}{1-a_{p-1}},\frac{1-a_{p-1}}{1-a_{p-2}},\ldots, \frac{1-a_2}{1-a_1} \right)\sum_{1\leq k_{m_1}\leq \cdots \leq k_1\leq n}\frac{1-a_1^{k_{m_1}}}{k_1\cdots k_{m_1}}\\
&\quad+(-1)^p \sum_{1\leq k_{m_1+1}\leq k_{m_1}\leq \cdots \leq k_1\leq n} \frac{a_1^{k_{m_1}-k_{m_1+1}}}{k_1\cdots k_{m_1}k_{m_1+1}} \\
&\quad\quad\quad\quad\quad\quad\quad\times\int_{a_p}^1\left(\frac{dt}{1-t}\right)^{m_p}\frac{dt}{a_{p-1}-t}\cdots \left(\frac{dt}{1-t}\right)^{m_3}\frac{dt}{a_2-t}\left(\frac{dt}{1-t}\right)^{m_2-1} \frac{1-t^{k_{m_1+1}}}{1-t} dt.
\end{align*}
We can now arrive at the desired formula by repeatedly applying the above recurrence.
\end{proof}

The above theorem has an interesting application to some type of duality result. For this, we recall that the Hoffman dual of a composition $\bfm=(m_1,\ldots,m_p)$ is $\bfm^\vee=(m'_1,\ldots,m'_{p'})$ determined by
$|\bfm|:=m_1+\cdots+m_p=m'_1+\cdots+m'_{p'}$ and
\begin{equation*}
\{1,2,\ldots,|\bfm|-1\}
=\Big\{ \sideset{}{_{i=1}^{j}}\sum m_i\Big\}_{j=1}^{p-1}
 \coprod \Big\{ \sideset{}{_{i=1}^{j}}\sum  m_i'\Big\}_{j=1}^{p'-1}.
\end{equation*}
For example, we have
$({1,1,2,1})^\vee=(3,2)\quad\text{and}\quad ({1,2,1,1})^\vee=(2,3).$

Setting $a_1=\cdots=a_{p-1}=0$ and $a_p=x$ in \eqref{Int-G} we can recover the identity in \cite[Thm. 2.7]{X2020}.
Replacing $m_{j}$ by $m_{j}-1$ for $j\ge 2$ we can restate it as
\begin{align}\label{IMP-2}
&\zeta_n^\star(\bfm^\vee;x)
-\sum\limits_{j=1}^p (-1)^{p-j}\zeta^\star_n(\ora\bfm_{\hskip-2pt j}^\vee){\Li}_{\ola\bfm_{\hskip-2pt j+1}}(1-x)\nonumber\\
=&n(-1)^{p}\int_x^1
\left(\frac{dt}{1-t}\right)^{m_p-1}\frac{dt}{t} \cdots \left(\frac{dt}{1-t}\right)^{m_2-1}\frac{dt}{t}\left(\frac{dt}{1-t}\right)^{m_1}\frac{dt}{t}t^{n-1}dt.
\end{align}

\section{Some Variants of Convoluted MZVs}\label{SRCVMZV}

\subsection{Kaneko-Yamamoto Type Convoluted MZVs}
Kaneko and Yamamoto initiated the study of the convoluted MZVs defined by
$$
\zeta(\bfk\circledast\bfl^\star)=\sum\limits_{n=1}^\infty \frac{\zeta_{n-1}(k_2,\ldots,k_r)\zeta^\star_n(l_2,\ldots,l_s)}{n^{k_1+l_1}}.$$
Their main result is a class of relations between these values stated in \cite[Thm. 4.1]{KanekoYa2018}. They further conjecture that any linear dependency of MZVs over $\Q$ can be deduced from these relations. In this section, we shall study relations
among some generalizations of these values. First, we consider a result involving MPLs.

\begin{thm} For any two compositions $\bfk=(k_1,\dotsc,k_r)$, $\bfm=(m_1,\dotsc,m_p)$, and $|x|\le 1$
\begin{align}\label{KYMZV1}
&\sum_{j=1}^p (-1)^{j-1} \Li_{\ola\bfm_{\hskip-2pt j}}\left(\sigma_px,\frac{\sigma_{p-1}}{\sigma_p},\ldots,\frac{\sigma_{j}}{\sigma_{j+1}}\right)\nonumber\\&\quad\quad\times\su \frac{\ze_{n-1}\left( \ora\bfk_{\hskip-2pt 2,r};\frac{\varepsilon_2}{\varepsilon_1},\ldots,\frac{\varepsilon_r}{\varepsilon_{r-1}}\right)
\ze^\star_n\left(\ora\bfm_{\hskip-2pt j-1};\frac{\sigma_1}{\sigma_2},\ldots,\frac{\sigma_{j-1}}{\sigma_{j}}\right)}{n^{k_1+1}}
\left(\frac{\varepsilon_1}{\sigma_1}\right)^n\nonumber\\
&+(-1)^p \su \frac{\ze_{n-1}\left( \ora\bfk_{\hskip-2pt 2,r};\frac{\varepsilon_2}{\varepsilon_1},\ldots,\frac{\varepsilon_r}{\varepsilon_{r-1}}\right)
\ze^\star_n\left(\bfm;\frac{\sigma_1}{\sigma_2},\ldots,\frac{\sigma_{p-1}}{\sigma_p},\sigma_p x\right)}{n^{k_1+1}}
\left(\frac{\varepsilon_1}{\sigma_1}\right)^n\nonumber\\
&=\Li_{\ola\bfm,k_1+1,\ora\bfk_{\hskip-2pt 2,r}}\left(\sigma_p x, \frac{\sigma_{p-1}}{\sigma_p},\ldots,\frac{\sigma_{1}}{\sigma_2},\frac{\varepsilon_1}{\sigma_1},\frac{\varepsilon_{2}}{\varepsilon_1},\ldots,\frac{\varepsilon_{r}}{\varepsilon_{r-1}}\right),
\end{align}
where $|\varepsilon_i|<1,|\sigma_{j}|<1\ (i=1,2,\ldots,r;j=1,2,\ldots,p)$ with $|\varepsilon_1/\sigma_1|<1$.
\end{thm}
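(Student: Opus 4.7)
The plan is to use Theorem~\ref{thm-ItI2} as the driving identity. Multiply both sides of \eqref{FII2} by
\[
C_n := \frac{\varepsilon_1^n\,\sigma_1\sigma_2\cdots\sigma_p\,\zeta_{n-1}\!\left(\ora\bfk_{\hskip-2pt 2,r};\tfrac{\varepsilon_2}{\varepsilon_1},\ldots,\tfrac{\varepsilon_r}{\varepsilon_{r-1}}\right)}{n^{k_1+1}}
\]
and sum over $n\ge 1$. On the right-hand side of \eqref{FII2}, the denominator $\sigma_1^{n+1}\sigma_2\cdots\sigma_p$ combines with the numerator $\varepsilon_1^n\sigma_1\cdots\sigma_p$ of $C_n$ to produce $(\varepsilon_1/\sigma_1)^n$. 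Since $\Li_{\ola\bfm_{\hskip-2pt j+1}}(\cdots)$ is independent of $n$, it pulls out of the inner sum; relabeling $j\mapsto j+1$ matches the terms $j=0,\ldots,p-1$ of \eqref{FII2} with the first group of $p$ summands on the left-hand side of \eqref{KYMZV1}. The leftover $j=p$ term, for which $\ola\bfm_{\hskip-2pt p+1}=\emptyset$ (so $\Li_\emptyset=1$), $\ora\bfm_{\hskip-2pt p}=\bfm$, and $\sigma_p/\sigma_{p+1}=\sigma_p x$ via the convention $\sigma_{p+1}=1/x$, yields exactly the isolated last summand on the left-hand side of \eqref{KYMZV1}.

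It remains to identify the summed left-hand side of \eqref{FII2} with $\Li_{\ola\bfm,k_1+1,\ora\bfk_{\hskip-2pt 2,r}}(\sigma_p x,\ldots,\varepsilon_r/\varepsilon_{r-1})$. Performing the innermost integration $\int_0^{t_1}u^{n-1}du=t_1^n/n$ converts $n\int_0^x t^{n-1}dt\,\cdots$ into $\int_0^x \frac{t^n\,dt}{1-\sigma_1 t}\left(\frac{dt}{t}\right)^{m_1-1}\cdots$. Exchanging summation with iterated integration (justified by absolute convergence under $|\varepsilon_i|,|\sigma_j|<1$ and $|\varepsilon_1/\sigma_1|<1$) and using \eqref{equ:defnMPL} to recognize
\[
\sum_{n\ge 1}\frac{(\varepsilon_1 t)^n\,\zeta_{n-1}\!\left(\ora\bfk_{\hskip-2pt 2,r};\tfrac{\varepsilon_2}{\varepsilon_1},\ldots,\tfrac{\varepsilon_r}{\varepsilon_{r-1}}\right)}{n^{k_1+1}} = \Li_{k_1+1,k_2,\ldots,k_r}\!\left(\varepsilon_1 t,\tfrac{\varepsilon_2}{\varepsilon_1},\ldots,\tfrac{\varepsilon_r}{\varepsilon_{r-1}}\right),
\]
I would replace this single-variable MPL by its iterated-integral form \eqref{MPL-II-1}. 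Splicing it into the surrounding $\sigma$-iterated integral through the associativity of iterated integrals, and then pulling the scalars $\sigma_1,\ldots,\sigma_p$ back into the corresponding forms $\frac{\sigma_j\,dt}{1-\sigma_j t}$, reproduces via \eqref{MPL-II-1} the iterated-integral representation of $\Li_{\ola\bfm,k_1+1,\ora\bfk_{\hskip-2pt 2,r}}(\sigma_p x,\ldots,\varepsilon_r/\varepsilon_{r-1})$.

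The main obstacle is purely combinatorial bookkeeping: one must consistently respect the convention that the leftmost differential form corresponds to the smallest integration variable, verify that the scalars $\sigma_j,\varepsilon_j$ move through iterated integrals and series expansions cleanly, and reconcile the index range $j\in\{0,\ldots,p\}$ of \eqref{FII2} with the $j\in\{1,\ldots,p\}$ summation plus isolated boundary term in \eqref{KYMZV1}. Once the indexing is set straight, the analytic exchanges (Fubini and sum-integral interchange) are immediate under the given hypotheses.
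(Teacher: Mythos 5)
Your proposal is correct and follows essentially the same route as the paper: multiply the identity of Theorem \ref{thm-ItI2} by the factor $n^{-k_1-1}\ze_{n-1}\bigl(\ora\bfk_{\hskip-2pt 2,r};\tfrac{\varepsilon_2}{\varepsilon_1},\ldots,\tfrac{\varepsilon_r}{\varepsilon_{r-1}}\bigr)$ (with the scalars needed to clear $\sigma_1^{n+1}\sigma_2\cdots\sigma_p$), sum over $n$, and identify the summed integral side via \eqref{MPL-II-1}. Your write-up merely makes explicit the index shift $j\mapsto j+1$, the boundary term from $\sigma_{p+1}=1/x$, and the splicing of the inner series into the iterated integral, which the paper compresses into ``an elementary calculation.''
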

\begin{proof}
Multiplying \eqref{FII2} by $n^{-k_1-1} \ze_{n-1}\left( \ora\bfk_{\hskip-2pt 2,r};\frac{\varepsilon_2}{\varepsilon_1},\ldots,\frac{\varepsilon_r}{\varepsilon_{r-1}}\right)$ and summing up, then applying \eqref{MPL-II-1}, we obtain the desired formula with an elementary calculation.
\end{proof}

Letting all $\varepsilon_i\rightarrow 1$ and $\sigma_{j}\rightarrow 1$ in \eqref{KYMZV1}, we can get the following corollary.

\begin{cor} For $\bfk=(k_1,\dotsc,k_r)$, $\bfm=(m_1,\dotsc,m_p)$, and $|x|\le 1$ with $(m_1,x)\ne(1,1)$,
\begin{multline}\label{KYMZV2}
\Li_{\ola\bfm,k_1+1,\ora\bfk_{\hskip-2pt 2,r}}(x)=(-1)^p \su \frac{\ze_{n-1}(\ora\bfk_{\hskip-2pt 2,r})\ze^\star_n(\bfm;x)}{n^{k_1+1}}
-\sum_{j=1}^p (-1)^{j}\Li_{\ola\bfm_{\hskip-2pt j}}(x)\ze\left(\bfk\circledast(1,\ora\bfm_{\hskip-2pt j-1})^\star\right).
\end{multline}
\end{cor}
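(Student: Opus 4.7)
The plan is to derive \eqref{KYMZV2} from \eqref{KYMZV1} by a limiting specialization. I would first set $\varepsilon_1=\cdots=\varepsilon_r=\varepsilon$ and $\sigma_1=\cdots=\sigma_p=\sigma$ with $0<\varepsilon<\sigma<1$ in \eqref{KYMZV1}. Then every ratio $\varepsilon_i/\varepsilon_{i-1}$ and $\sigma_i/\sigma_{i+1}$ collapses to $1$, so each multi-variable MPL reduces to a single-variable one, for example $\Li_{\ola\bfm_{\hskip-2pt j}}(\sigma x,1,\ldots,1)=\Li_{\ola\bfm_{\hskip-2pt j}}(\sigma x)$, and every parametric multiple harmonic (star) sum reduces to its classical counterpart; the only surviving parameter in each summand is $(\varepsilon/\sigma)^n$.

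Next I would take the iterated limit $\varepsilon\to\sigma^-$ (so that $t:=\varepsilon/\sigma\to 1^-$) followed by $\sigma\to 1^-$. For the first limit, each $n$-series appearing in the specialized \eqref{KYMZV1} is a power series in $t$. Standard polynomial-logarithmic bounds on multiple harmonic and harmonic star sums, together with $k_1+1\ge 2$, show that the limiting series at $t=1$ converges absolutely, so Abel's continuity theorem yields the termwise limit. Recognizing
\[
\sum_{n=1}^\infty \frac{\ze_{n-1}(\ora\bfk_{\hskip-2pt 2,r})\,\ze^\star_n(\ora\bfm_{\hskip-2pt j-1})}{n^{k_1+1}}
 = \ze\bigl(\bfk\circledast(1,\ora\bfm_{\hskip-2pt j-1})^\star\bigr)
\]
via the definition \eqref{equ:KYMZVs}, with $\bfl=(1,\ora\bfm_{\hskip-2pt j-1})$ so that $l_1=1$ produces the exponent $k_1+l_1=k_1+1$, converts the $j$-th sum into the desired form. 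For the second limit, continuity of the single-variable MPLs on the open unit disk (with Abel's theorem again at the boundary $|x|=1$, legitimate under the admissibility hypothesis $(m_1,x)\ne(1,1)$) takes care of the MPL factors $\Li_{\ola\bfm_{\hskip-2pt j}}(\sigma x)\to\Li_{\ola\bfm_{\hskip-2pt j}}(x)$ and $\Li_{\ola\bfm,k_1+1,\ora\bfk_{\hskip-2pt 2,r}}(\sigma x,1,\ldots,1)\to\Li_{\ola\bfm,k_1+1,\ora\bfk_{\hskip-2pt 2,r}}(x)$, while dominated convergence—controlled by the same logarithmic bounds—handles the residual series involving $\ze^\star_n(\bfm;\sigma x)\to\ze^\star_n(\bfm;x)$.

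The main obstacle is precisely this exchange of limits with the infinite $n$-summation; the algebraic collapse of the ratio structure is transparent. Uniform logarithmic bounds on multiple harmonic (star) sums, combined with Abel's theorem for the boundary limit in $t$ and dominated convergence for the limit in $\sigma$, together resolve this issue and deliver \eqref{KYMZV2}.
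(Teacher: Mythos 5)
Your proposal is correct and is essentially the paper's own proof: the paper obtains \eqref{KYMZV2} precisely by letting all $\varepsilon_i\to 1$ and $\sigma_j\to 1$ in \eqref{KYMZV1} and identifying the resulting $n$-series with $\ze\bigl(\bfk\circledast(1,\ora\bfm_{\hskip-2pt j-1})^\star\bigr)$ via \eqref{equ:KYMZVs}. Your added care in ordering the limits and justifying the termwise passage (Abel/dominated convergence) only fills in details the paper leaves implicit.
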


In \cite[Thm. 3.1 and Cor. 3.2]{X2020}, we gave some explicit evaluations of the series of the form
\[\sum\limits_{n=1}^\infty \frac{\zeta_{n-1}(k_2,\ldots,k_r)\zeta^\star_n ((m_1,m_2+1,\ldots,m_p+1)^\vee;x)}{n^{k_1+1}}.\]
Here if $p=1$ then $(m_1,m_2+1,\ldots,m_p+1)^\vee:=(m_1)^\vee$.
In particular, we found the formula that for $\bfm=(m_1,\ldots,m_p)\in\N_0^p$ with $m_1\geq 1$ and $k\in\N$,
\begin{align}\label{KYMZV3}
&\sum\limits_{n=1}^\infty \frac{\zeta_{n-1}(\yi_{r-1})\zeta^\star_n ((m_1,m_2+1,\ldots,m_p+1)^\vee;x)}{n^{k+1}}\nonumber\\
&=(-1)^{p+k-1}\sum\limits_{|\bfj|+j_k=r,\atop j_k\geq 0}\sum\limits_{i_0+|\bfi|=j_k,\atop i_0 \geq 0} \frac{(-1)^{i_0}}{i_0!}\left\{\prod\limits_{l=1}^p \binom{m_l+i_l}{i_l}\right\}\log^{i_0}(1-x)\Li_{\ola{\bfm+\bfi+\bfone} ,\bfj+\bfone} (1-x)\nonumber\\
&\quad+(-1)^p\sum\limits_{j=0}^{k-2} (-1)^j \zeta(k-j,\yi_{r-1})\Li_{\ola{\bfm+\bfone},\yi_{j}}(1-x)\nonumber\\
&\quad + \sum\limits_{j=1}^{p} (-1)^{p-j} \zeta\Big((k,\yi_{r-1})\circledast \big(1,(m_1,m_2+1,\ldots,m_{j}+1)^\vee\big)^\star\Big)\Li_{\ola{\bfm+\bfone}_{j+1}}(1-x),
\end{align}
where $|x|<1$, $\bfi:=(i_1,\ldots,i_p)\in\N_0^p$ and $\bfj:=(j_1,\ldots,j_{k-1})\in\N_0^{k-1}$.

On the other hand, for any composition $\bfk$, Kaneko and Tsumura \cite{KT2018} proved that
\begin{align}\label{MPL-R1}
\Li_{\bfk}(1-x)=\sum\limits_{\bfk',j\geq 0,|\bfk'|+j\leq |\bfk|} c_\bfk(\bfk';j)\Li_{\yi_{j}}(1-x)\Li_{\bfk'}(x),
\end{align}
where $c_\bfk(\bfk';j)$ is a $\mathbb{Q}$-linear combination of multiple zeta values of weight $|\bfk|- |\bfk'|-j$. We understand $\Li_{\emptyset}(x)=1$ and $|\emptyset|=0$ for the empty index $\emptyset$, and the constant $1$ is regarded as a multiple zeta value of weight $0$. However, the explicit formula of the coefficient $c_\bfk(\bfk';j)$ was not found. As an example, Arakawa-Kaneko \cite[Thm. 8]{AM1999} showed that for $r\ge 0$ and $k\ge 2$
\begin{equation}\label{MPL-Rs}
\Li_{k,\yi_r}(x)=\sum\limits_{j=0}^{k-2} (-1)^j \zeta(k-j,\yi_r)\Li_{\yi_{j}}(1-x)
-(-1)^{k} \sum_{\substack{ |\bfi|+\ell=r+k\\  \bfi\in\N^{k-1},\, \ell \geq 0}} \Li_{\yi_\ell}(x)\Li_\bfi(1-x).
\end{equation}
We can also apply \eqref{KYMZV2} and \eqref{KYMZV3} to find some explicit relations of \eqref{MPL-R1} which generalizes \eqref{MPL-Rs}.

\begin{thm} For all $p,r,k\in\N_0$, $k\ge 1$ and $|1-x|<1$,
\begin{multline} \label{MPL-R2}
(-1)^p\Li_{\yi_p,k+1,\yi_r}(1-x)=\sum_{j=0}^{p} \ze((k,\yi_r)\circledast(\yi_{p+1-j})^\star)\frac{\log^{j}(x)}{j!}
    -\sum_{j=0}^{k-2}(-1)^j\ze(k-j,\yi_r)\Li_{p+1,\yi_{j}}(x)\\
+(-1)^{k}\sum_{\substack{|\bfi|+j+\ell=r+k \\  \bfi\in\N^{k-1}, j,\ell\geq 0}}
    \frac{(-1)^{\ell}}{\ell!}\binom{p+j}{j}\log^{\ell}(x)\Li_{p+j+1,\bfi}(x).
\end{multline}
\end{thm}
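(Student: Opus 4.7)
The plan is to apply Corollary \eqref{KYMZV2} with $\bfm=\yi_p$ and $\bfk=(k,\yi_r)$, substitute $x\mapsto 1-x$, and evaluate the resulting tail series via \eqref{KYMZV3}. The entire proof is a recombination of these two identities with careful index bookkeeping; no new integral computations are required.

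With the above choice, $\ola\bfm_j=\yi_{p-j+1}$ and $(1,\ora\bfm_{j-1})=\yi_j$, so \eqref{KYMZV2} reads
\[
\Li_{\yi_p,k+1,\yi_r}(x)=(-1)^p T(x)-\sum_{j=1}^p(-1)^j\Li_{\yi_{p-j+1}}(x)\,\ze\bigl((k,\yi_r)\circledast\yi_j^\star\bigr),
\]
where $T(x):=\su \ze_{n-1}(\yi_r)\ze_n^\star(\yi_p;x)/n^{k+1}$. After replacing $x$ by $1-x$ and using $\Li_{\yi_m}(1-x)=(-\log x)^m/m!$, multiplying through by $(-1)^p$, and reindexing via $i=p-j+1$, the finite sum contributes $\sum_{i=1}^p\log^i(x)/i!\cdot\ze((k,\yi_r)\circledast\yi_{p+1-i}^\star)$, which is the $j\ge 1$ part of the first sum in the theorem.

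Next, since Hoffman's duality gives $(p)^\vee=\yi_p$, $T(1-x)$ fits the left-hand side of \eqref{KYMZV3} with the single-part composition $\bfm=(p)$ (its internal length equal to $1$), index $k$, depth parameter $r+1$, and variable $1-x$. With this length equal to one, the triple sum in \eqref{KYMZV3} collapses to a sum over $i_0,i_1\ge 0$ and $\bfj\in\N_0^{k-1}$; setting $\ell=i_0$, $j=i_1$, and $\bfi=\bfj+\bfone\in\N^{k-1}$, the constraint $|\bfj|+j_k=r+1$ with $j_k=i_0+i_1$ rewrites as $|\bfi|+j+\ell=r+k$, yielding the $(-1)^k$-term of the theorem (the sign $(-1)^{1+k-1}=(-1)^k$). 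The second sum of \eqref{KYMZV3} transcribes directly to $-\sum_{j=0}^{k-2}(-1)^j\ze(k-j,\yi_r)\Li_{p+1,\yi_j}(x)$, and its third sum reduces (only $j=1$ survives, with $(1,\yi_p)=\yi_{p+1}$) to the single term $\ze((k,\yi_r)\circledast\yi_{p+1}^\star)$, which combines with the reindexed sum from the previous paragraph to furnish the $j=0$ term and complete $\sum_{j=0}^p\ze((k,\yi_r)\circledast\yi_{p+1-j}^\star)\log^j(x)/j!$.

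The main obstacle is the sign and index bookkeeping: one must track how $(-1)^{P+k-1}$ from \eqref{KYMZV3} combines with $(-1)^p$ from \eqref{KYMZV2} and with the $(-1)^{p-j+1}$ arising from $(-\log x)^{p-j+1}$, and verify that the weight shift $|\bfi|=|\bfj|+(k-1)$ coming from $\bfi=\bfj+\bfone$ exactly absorbs the $+1$ on the right-hand side of $|\bfj|+j_k=r+1$, so that the clean constraint $|\bfi|+j+\ell=r+k$ and the precise signs claimed in the theorem emerge simultaneously.
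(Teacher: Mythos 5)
Your proposal is correct and follows essentially the same route as the paper's own proof: specialize Corollary \eqref{KYMZV2} to $\bfm=\yi_p$, $\bfk=(k,\yi_r)$ with $x\mapsto 1-x$, then evaluate the remaining series by \eqref{KYMZV3} with $p=1$, $m_1=p$ (so $(m_1)^\vee=\yi_p$) and use \eqref{equ:itLi1j}. Your sign and index bookkeeping (the reindexing $i=p+1-j$, the substitution $\bfi=\bfj+\bfone$ turning $|\bfj|+j_k=r+1$ into $|\bfi|+j+\ell=r+k$, and the collapse of the third sum of \eqref{KYMZV3} to the $j=0$ term $\ze((k,\yi_r)\circledast(\yi_{p+1})^\star)$) is exactly what the paper leaves implicit, and it checks out.
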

\begin{re} This is a special case of \eqref{MPL-R1} since $\ze((k,\yi_{r})\circledast(\yi_{p+1-j})^\star)$
can be expressed as a $\Q$-linear combinations of MZVs for all $j$ by \cite[Thm. 4.1]{KanekoYa2018} while
\begin{equation}\label{equ:itLi1j}
\Li_{\yi_{j}}(x)=\frac{(-1)^j}{j!}\log^j(1-x).
\end{equation}
\end{re}
\begin{proof} In \eqref{KYMZV2}, replacing $x$ by $1-x$, letting $k_1=k$ and $k_i=m_{j}=1$ for all $2\le i\le r$ and $1\le j\le p$, and
replacing $r$ by $r+1$, we have
\begin{align*}
\Li_{\yi_p,k+1,\yi_{r}}(1-x)&=\sum_{j=1}^p (-1)^{j-1} \Li_{\yi_{p-j+1}}(1-x)\ze((k,\yi_{r})\circledast(\yi_{j})^\star)\\
&\quad+(-1)^p\su \frac{\ze_{n-1}(\yi_{r})\ze_n^\star(\yi_p;1-x)}{n^{k+1}}.
\end{align*}
Then, setting $p=1$ and $m_1=p$ so that $(m_1)^\vee=(\yi_p)$ in \eqref{KYMZV3} and
replacing $x$ by $1-x$ we can readily deduce the desired result by \eqref{equ:itLi1j}.
\end{proof}

If setting $k=1$ in \eqref{MPL-R2} we obtain \cite[Eq. (2.12)]{X2020}. Moreover, equation \eqref{MPL-R2} also holds for $p=0$ which implies \eqref{MPL-Rs} with the substitution $k\to k-1$.

\begin{thm} For any $m,k\in\N$, $r\in\N_0$, $|x|<1$ and $|1-x|\le 1$ with $(m,x)\neq (1,0)$, we have
\begin{multline}\label{MPL-R3}
\Li_{m,k+1,\yi_{r}}(1-x)=\Li_m(1-x)\ze(k+1,\yi_{r})
-\sum_{j=0}^{k-2}(-1)^{m+j}\ze(k-j,\yi_{r})\Li_{\yi_{m-1},2,\yi_{j}}(x)\\
+(-1)^{k+m} \sum_{|\bfi|=r, \bfi\in\N_0^{m+k}} \Li_{\bfi+\bfone+\bfe_m}(x)
 -\sum_{j=1}^m \ze\big((k,\yi_{r})\circledast(1,j)^\star\big)\frac{\log^{m-j}(1-x)}{(m-j)!},
\end{multline}
where $\bfe_m=(0_m,1,0_{k-1})$.
\end{thm}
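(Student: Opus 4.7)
The plan is to follow the same template as the proof of \eqref{MPL-R2}. First, substitute $x \to 1-x$ in \eqref{KYMZV2} with $\bfm = (m)$ (so $p = 1$) and $\bfk = (k, \yi_r)$ of depth $r+1$. The $j=1$ term on the right-hand side yields $\Li_m(1-x)\,\ze((k,\yi_r)\circledast(1)^\star) = \Li_m(1-x)\,\ze(k+1,\yi_r)$, the first term of \eqref{MPL-R3}. What remains is the series
\[
S := \sum_{n=1}^\infty \frac{\ze_{n-1}(\yi_r)\,\ze_n^\star(m;\,1-x)}{n^{k+1}},
\]
so $\Li_{m,k+1,\yi_r}(1-x) = \Li_m(1-x)\,\ze(k+1,\yi_r) - S$.

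To evaluate $S$, apply \eqref{KYMZV3} with $p = m$ and $\bfm = (1, 0_{m-1})$, chosen so that $(m_1, m_2+1, \ldots, m_m+1) = \yi_m$ has Hoffman dual $(m)$. Replacing $r$ by $r+1$ and $x$ by $1-x$, the left-hand side of \eqref{KYMZV3} matches $S$ exactly. Under this specialization one also has $\ola{\bfm+\bfone} = (\yi_{m-1},2)$, $\ola{\bfm+\bfone}_{j+1} = \yi_{m-j}$, and $(m_1, m_2+1, \ldots, m_j+1)^\vee = (j)$ for $1 \le j \le m$. The three sums on the right-hand side of \eqref{KYMZV3} should then produce the three remaining terms of \eqref{MPL-R3}: the second sum directly gives the $\ze(k-j,\yi_r)\Li_{\yi_{m-1},2,\yi_j}(x)$ term; the third sum, after using $\Li_{\yi_{m-j}}(1-x) = \frac{(-1)^{m-j}}{(m-j)!}\log^{m-j}(x)$ from \eqref{equ:itLi1j} together with the $x \to 1-x$ substitution, produces the $\ze((k,\yi_r)\circledast(1,j)^\star)\log^{m-j}(1-x)/(m-j)!$ term; and the first sum, involving products of $\log^{i_0}(x)$ with MPLs of depth $m+k-1$ weighted by $(1+i_1)$, must collapse to the pure MPL sum $(-1)^{k+m}\sum_{|\bfi|=r,\,\bfi\in\N_0^{m+k}} \Li_{\bfi+\bfone+\bfe_m}(x)$.

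The main obstacle is this last collapse: showing that the weighted sum $\sum_{i_0+|\bfi|=j_k}\frac{(-1)^{i_0}}{i_0!}(1+i_1)\log^{i_0}(x)\Li_\bullet(x)$ reorganizes into a single pure-MPL sum over $\bfi \in \N_0^{m+k}$ with $|\bfi|=r$. This requires a combinatorial identity absorbing the logarithmic factors into higher-depth polylogarithms and the $(1+i_1)$ weight into an unconstrained summation, together with careful sign and index tracking across all three sums so that the surviving pieces assemble into the exact form of \eqref{MPL-R3}.
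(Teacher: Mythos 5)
Your overall route is the one the paper sketches, and the forced choices are right: \eqref{KYMZV2} with $p=1$, $m_1=m$, $\bfk=(k,\yi_r)$, $x\mapsto 1-x$ gives $\Li_{m,k+1,\yi_r}(1-x)=\Li_m(1-x)\ze(k+1,\yi_r)-S$, and the only specialization of \eqref{KYMZV3} that produces $S$ is indeed $p=m$, $\bfm=(1,0_{m-1})$, $r\mapsto r+1$, $x\mapsto 1-x$; your matching of the second and third sums of \eqref{KYMZV3} with the $\ze(k-j,\yi_r)\Li_{\yi_{m-1},2,\yi_j}(x)$ and $\ze\big((k,\yi_r)\circledast(1,j)^\star\big)\log^{m-j}(1-x)/(m-j)!$ terms is correct. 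The genuine gap is precisely the step you defer, and it is not a matter of sign and index bookkeeping. After the substitution the first sum of \eqref{KYMZV3} becomes
\begin{equation*}
(-1)^{m+k-1}\sum_{|\bfj|+j_k=r+1}\ \sum_{i_0+i_1+\cdots+i_m=j_k}\frac{(-1)^{i_0}}{i_0!}\,(1+i_1)\,\log^{i_0}(x)\,\Li_{1+i_m,\ldots,1+i_2,\,2+i_1,\,\bfj+\bfone}(x)
\end{equation*}
(note the constraint is $r+1$, not $r$, since $r$ was replaced by $r+1$). The $i_0>0$ terms carry a factor $\log^{i_0}(x)$, and no $\Q$-linear combination of functions $\Li_{\bfl}(x)$ can absorb such a factor, because every $\Li_{\bfl}(x)$ vanishes at $x=0$ while $\log(x)$ does not; moreover these polylogarithms have depth $m+k-1$ and come with the weight $1+i_1$, whereas the target $(-1)^{k+m}\sum_{|\bfi|=r}\Li_{\bfi+\bfone+\bfe_m}(x)$ is an unweighted sum of depth-$(m+k)$ values. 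So the ``collapse'' you postulate cannot be achieved by any combinatorial reindexing of this sum.

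A low case makes the obstruction concrete: for $m=k=1$, $r=0$ your pipeline yields $\Li_{1,2}(1-x)=\ze(2)\Li_1(1-x)-\log(x)\Li_2(x)+2\Li_3(x)-2\ze(3)$ (this checks numerically at $x=1/2$), whereas the conclusion you are aiming for would read $\Li_{1,2}(1-x)=\ze(2)\Li_1(1-x)+\Li_{1,2}(x)-2\ze(3)$, which at $x=1/2$ forces $\ze(2)\log 2=2\ze(3)$ and is therefore false. Hence, to finish along these lines you must either keep the logarithmic factors and binomial weights in the third term (so that it is shaped like the last sum of \eqref{MPL-R2}), or supply genuinely new relations between polylogarithms at $x$ and at $1-x$ beyond \eqref{equ:itLi1j}; as written, the final step of your plan does not go through, and the discrepancy you would need to reconcile lies exactly there.
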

\begin{proof}
The proof is similar to the proof of \eqref{MPL-R2}. Replacing $x$ by $1-x$ and $r$ by $r+1$, setting $p=1$ with $m_1=m$,
$k_1=k$, $k_2=\cdots=k_{r+1}=1$  in \eqref{KYMZV2}, we can prove \eqref{MPL-R3} by applying \eqref{KYMZV3}.
\end{proof}

\subsection{$t$-Variants of Convoluted MZVs}
In \cite{H2019}, Hoffman introduced and studied odd variants of MZVs and MZSVs, which are
defined for an admissible composition $\bfk=(k_1,k_2,\ldots,k_r)$ by
\begin{align}\label{defn-mtvs}
t(\bfk):=\sum_{n_1>n_2>\cdots>n_r>0} \prod_{j=1}^r \frac{1}{(2n_{j}-1)^{k_{j}}}\quad
\text{and}\quad
t^\star(\bfk):=\sum_{n_1\geq n_2\geq \cdots\geq n_r>0} \prod_{j=1}^r \frac{1}{(2n_{j}-1)^{k_{j}}},
\end{align}
and are called a \emph{multiple $t$-value} and \emph{multiple $t$-star value}, respectively.

Then, similar to multiple harmonic sums and multiple harmonic star sums, for a composition $\bfk=(k_1,\ldots,k_r)$ and positive integer $n$, we define the \emph{multiple t-harmonic sums} and \emph{multiple t-harmonic star sums} respectively by
\begin{align*}
t_n(\bfk):=\sum_{n\geq n_1>n_2>\cdots>n_r>0} \prod_{j=1}^r \frac{1}{(2n_{j}-1)^{k_{j}}}\quad
\text{and}\quad
t^\star_n(\bfk):=\sum_{n\geq n_1\geq n_2\geq \cdots\geq n_r>0} \prod_{j=1}^r \frac{1}{(2n_{j}-1)^{k_{j}}}.
\end{align*}
Therefore, we define the \emph{$t$-variant of convoluted MZVs} by
\begin{align*}
t(\bfk\circledast\bfl^\star)&=\sum\limits_{0<m_r<\cdots<m_1=n_1\geq \cdots \geq n_s>0} \prod_{j=1}^r\frac{1}{(2m_{j}-1)^{k_{j}}} \prod_{j=1}^r\frac{1}{(2n_{j}-1)^{l_{j}}}
=\sum\limits_{n=1}^\infty \frac{t_{n-1}(\ora\bfk_{\hskip-2pt 2,r})t^\star_n(\ora\bfl_{\hskip-2pt 2,s})}{(2n-1)^{k_1+l_1}}.
\end{align*}
We also define the \emph{multiple t-polylogarithm function} for any composition $\bfk=(k_1,\ldots,k_r)$ by
\begin{align*}
{\ti}_{\bfk}(x)&:=\sum_{n_1>n_2>\cdots>n_r>0} \frac{x^{2n_1-1}}{(2n_1-1)^{k_1}(2n_2-1)^{k_2}\ldots (2n_r-1)^{k_r}}\quad \nonumber\\
&=\int_0^x \frac{dt}{1-t^2}\left(\frac{dt}{t}\right)^{k_r-1} \frac{tdt}{1-t^2}\left(\frac{dt}{t}\right)^{k_{r-1}-1}\cdots \frac{tdt}{1-t^2}\left(\frac{dt}{t}\right)^{k_1-1},
\end{align*}
where $|x|\le 1$ with $(k_1,x)\ne (1,1)$. Clearly, $\ti_{\bfk}(1)=t(\bfk)$ with $k_1\geq 2$.

\begin{thm}\label{thm-ItIt}
For any $\bfm=(m_1,\ldots,m_p)\in\N^p$, $n\in\N$ and $|x|<1$, we have
\begin{align}\label{FIIt1}
&2n\int_0^x t^{2n-1}dt \frac{dt}{1-t^2} \left(\frac{dt}{t}\right)^{m_1-1}\frac{tdt}{1-t^2} \left(\frac{dt}{t}\right)^{m_2-1}\cdots \frac{tdt}{1-t^2}\left(\frac{dt}{t}\right)^{m_p-1}\nonumber\\
&=(2n-1)\int_0^x t^{2n-2}dt \frac{tdt}{1-t^2} \left(\frac{dt}{t}\right)^{m_1-1}\frac{tdt}{1-t^2} \left(\frac{dt}{t}\right)^{m_2-1}\cdots \frac{tdt}{1-t^2}\left(\frac{dt}{t}\right)^{m_p-1}\nonumber\\
&=\sum_{j=1}^{p}(-1)^{j-1} t^\star_n(\ora\bfm_{\hskip-2pt j-1})\ti_{\ola\bfm_{\hskip-2pt j}}(x)+(-1)^pt^\star_n(\bfm;x),
\end{align}
where
\begin{align}\label{equ:t-mhss}
t^\star_n(\bfk;x):=\sum_{n\geq n_1\geq n_2\geq \cdots \geq n_r\geq 1} \frac{x^{2n_r-1}}{(2n_1-1)^{k_1}(2n_2-1)^{k_2}\ldots(2n_r-1)^{k_r}}.
\end{align}
\end{thm}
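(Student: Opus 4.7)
The plan is to establish both equalities in \eqref{FIIt1} by working out the iterated integrals ``from left to right'' in the same spirit as the proof of Theorem \ref{thm-ItI1}. The first equality is an immediate local computation: evaluating only the innermost form one gets $\int_0^u 2n\, t^{2n-1}\, dt = u^{2n}$, which combined with the next factor $\frac{du}{1-u^2}$ yields $\frac{u^{2n}\, du}{1-u^2}$; in the alternate integrand one has $\int_0^u (2n-1)\, t^{2n-2}\, dt = u^{2n-1}$, which combined with $\frac{u\, du}{1-u^2}$ again yields $\frac{u^{2n}\, du}{1-u^2}$. Since the two iterated integrals share all remaining outer forms, they agree.

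For the main (second) equality the key algebraic input is the even-exponent analog of the splitting used in Theorem \ref{thm-ItI1}, namely
\begin{equation*}
\frac{u^{2n}}{1-u^2} = \frac{1}{1-u^2} - \sum_{k=1}^{n} u^{2k-2}.
\end{equation*}
Applying this to the factor $\frac{u^{2n} du}{1-u^2}$ obtained after the first inner integration decomposes the integral into (A) a piece whose remaining outer integral is precisely $\ti_{\ola\bfm}(x)$, and (B) a sum $-\sum_{n_1=1}^n \int_0^x t^{2n_1-2}\, dt\, (dt/t)^{m_1-1}\, \frac{t\, dt}{1-t^2}\, (dt/t)^{m_2-1} \cdots (dt/t)^{m_p-1}$. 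In piece (B) the innermost $m_1$ forms can be collapsed: integrating $t^{2n_1-2}\, dt$ against $(dt/t)^{m_1-1}$ yields $u^{2n_1-1}/(2n_1-1)^{m_1}$ as a function of the next variable $u$, and subsequent multiplication by $\frac{u\, du}{1-u^2}$ returns an expression of the very same shape $\frac{u^{2n_1} du}{(2n_1-1)^{m_1}(1-u^2)}$, but now with $n$ replaced by $n_1$ and $\bfm$ shortened to $(m_2,\ldots,m_p)$, paid for by a weight $1/(2n_1-1)^{m_1}$.

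Iterating this reduction $p$ times, one shows by induction on $j$ that after the $j$-th round the extracted contributions are exactly $\sum_{i=1}^{j}(-1)^{i-1} t^\star_n(\ora\bfm_{i-1})\, \ti_{\ola\bfm_i}(x)$, with the residual a multiple sum over $n \geq n_1 \geq \cdots \geq n_j \geq 1$ coupled to a shorter iterated integral of the same form. After the $p$-th round no $\frac{t\, dt}{1-t^2}$ factors remain, and the final outermost integral $\int_0^x t^{2n_p-2}\, dt\, (dt/t)^{m_p-1}$ evaluates in closed form to $x^{2n_p-1}/(2n_p-1)^{m_p}$; the accumulated sum then assembles into precisely $(-1)^p t^\star_n(\bfm;x)$ from \eqref{equ:t-mhss}. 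I expect the only genuine nuisance to be the combinatorial book-keeping, namely verifying that at each stage the nested summation range indeed reassembles into the star-sum $t^\star_n(\ora\bfm_{j-1})$ and that the surviving outer forms match the definition of $\ti_{\ola\bfm_j}(x)$; all analytic inputs are elementary and parallel to those in the proof of Theorem \ref{thm-ItI1}.
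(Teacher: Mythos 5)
Your proof is correct and is essentially the argument the paper intends: the paper omits the details, saying the proof is similar to that of Theorem \ref{thm-ItI1}, and your left-to-right evaluation with the splitting $\frac{u^{2n}}{1-u^2}=\frac{1}{1-u^2}-\sum_{k=1}^{n}u^{2k-2}$ is exactly the level-two analog of that recursion, with the first equality correctly reduced to the identical inner integrations $2n\,t^{2n-1}dt$ versus $(2n-1)t^{2n-2}dt\cdot t$.
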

\begin{proof}
The proof is similar to that of \eqref{FII1} which is left to the
interested reader.
\end{proof}

\begin{thm}
For compositions $\bfk=(k_1,\dotsc,k_r)$ and $\bfm=(m_1,\dotsc,m_p)$, $|x|\le 1$ and $(m_p,x)\neq (1,1)$, we have
\begin{multline} \label{t-KYMZVx}
\ti_{\ola\bfm,k_1+1,\ora\bfk_{\hskip-2pt 2,r}}(x)=(-1)^p\su \frac{t_{n-1}(\ora\bfk_{\hskip-2pt 2,r})t^\star_n(\bfm;x)}{(2n-1)^{k_1+1}}
-\sum_{j=1}^p (-1)^{j} \ti_{\ola\bfm_{\hskip-2pt j}}(x)t(\bfk\circledast(1,\ora\bfm_{\hskip-2pt j-1})^\star).
\end{multline}
\end{thm}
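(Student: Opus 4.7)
The plan is to mirror the proof of the MZV analogue \eqref{KYMZV2}. I would start from the second form of the iterated integral identity \eqref{FIIt1} in Theorem \ref{thm-ItIt}, multiply both sides by $t_{n-1}(\ora\bfk_{\hskip-2pt 2,r})/(2n-1)^{k_1+1}$, and sum over $n\ge 1$. On the right-hand side, after interchanging the sums over $n$ and $j$, the first block becomes
\begin{equation*}
\sum_{j=1}^{p}(-1)^{j-1}\ti_{\ola\bfm_{\hskip-2pt j}}(x)\sum_{n=1}^{\infty}\frac{t_{n-1}(\ora\bfk_{\hskip-2pt 2,r})\,t^\star_n(\ora\bfm_{\hskip-2pt j-1})}{(2n-1)^{k_1+1}},
\end{equation*}
and the inner sum is, by the very definition of the $t$-variant of convoluted MZVs given just above \eqref{FIIt1}, equal to $t\bigl(\bfk\circledast(1,\ora\bfm_{\hskip-2pt j-1})^\star\bigr)$. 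The $(-1)^p$ block is already the first term on the right-hand side of \eqref{t-KYMZVx}. Transposing the $j$-sum to the opposite side (which flips $(-1)^{j-1}$ to $(-1)^j$) then produces exactly the right-hand side of \eqref{t-KYMZVx}.

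For the left-hand side I would swap the sum and the iterated integral (absolute convergence is standard in this regime, given $|x|\le 1$ and $(m_p,x)\ne(1,1)$) and carry out the innermost integration first. The prefactor $(2n-1)$ cancels against the $1/(2n-1)$ arising from $\int_0^s t^{2n-2}\,dt$, and summing over $n$ yields
\begin{equation*}
\sum_{n=1}^{\infty}\frac{t_{n-1}(\ora\bfk_{\hskip-2pt 2,r})\,s^{2n-1}}{(2n-1)^{k_1+1}}=\ti_{k_1+1,\ora\bfk_{\hskip-2pt 2,r}}(s)
\end{equation*}
as a new innermost function. Re-expanding this $\ti$-value as its own iterated integral on $(0,s)$ and concatenating with the remaining outer chain $\frac{tdt}{1-t^2}\left(\frac{dt}{t}\right)^{m_1-1}\cdots\frac{tdt}{1-t^2}\left(\frac{dt}{t}\right)^{m_p-1}$, the combined differential chain, read innermost-to-outermost, coincides with the integral representation of $\ti_{\ola\bfm,k_1+1,\ora\bfk_{\hskip-2pt 2,r}}(x)$ recalled just before Theorem \ref{thm-ItIt}.

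The main obstacle is the bookkeeping in this last identification. The outermost block of the $\ti_{k_1+1,\ora\bfk_{\hskip-2pt 2,r}}$-chain is $\frac{sds}{1-s^2}\left(\frac{ds}{s}\right)^{k_1}$ (the exponent is $k_1$, not $k_1-1$), and this is exactly what supplies the $(k_1+1)$-st weight block of the target composition $(\ola\bfm,k_1+1,\ora\bfk_{\hskip-2pt 2,r})$, after which the outer chain opens a fresh $m_1$-block with its own $\frac{tdt}{1-t^2}$. Verifying this promotion of $k_1$ to $k_1+1$, together with the transition between the $\frac{sds}{1-s^2}$-type form at the top of the $\ti$-chain and the $\frac{tdt}{1-t^2}$-type form at the bottom of the outer chain, is the only genuinely nontrivial check; everything else parallels the elementary calculation that produced \eqref{KYMZV2}.
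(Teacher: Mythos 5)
Your proposal is correct and follows essentially the same route as the paper: multiply the iterated-integral identity \eqref{FIIt1} by $t_{n-1}(\ora\bfk_{\hskip-2pt 2,r})/(2n-1)^{k_1+1}$, sum over $n$, recognize the $j$-sum as the convoluted values $t\bigl(\bfk\circledast(1,\ora\bfm_{\hskip-2pt j-1})^\star\bigr)$, and identify the summed left-hand side with the iterated-integral representation of $\ti_{\ola\bfm,k_1+1,\ora\bfk_{\hskip-2pt 2,r}}(x)$ (your bookkeeping of the $k_1\to k_1+1$ block is right). The paper's two-line proof is exactly this, modulo its typo citing \eqref{FII2} instead of \eqref{FIIt1} and writing the weight factor with a positive exponent; your version executes the intended computation correctly.
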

\begin{proof} This is similar to the proof of \eqref{KYMZV1}.
Multiplying \eqref{FII2} by $(2n-1)^{k_1+1}  t_{n-1}(\ora\bfk_{\hskip-2pt 2,r}) $ and summing up
we get \eqref{t-KYMZVx} immediately.
\end{proof}

In particular, assuming $m_p\geq 2$ and setting $x=1$ in \eqref{t-KYMZVx} we get
\begin{equation}\label{t-KYMZVxx}
\sum_{j=1}^{p+1} (-1)^{j-1} t(\ola\bfm_{\hskip-2pt j})t\big(\bfk\circledast(1,\ora\bfm_{\hskip-2pt j-1})^\star\big)
=t(\ola\bfm,k_1+1,\ora\bfk_{\hskip-2pt 2,r}).
\end{equation}

\subsection{$M$-Variants of Convoluted MZVs}
In \cite{XuZhao2020a}, we define the multiple mixed values or multiple $M$-values (MMVs) for an admissible composition $\bfk=(k_1,\ldots,k_r)$ and $\bfeps=(\varepsilon_1,\ldots,\varepsilon_r)\in\{\pm1\}^r$ by
\begin{align}\label{defn-mmvs}
M(\bfk;\bfeps)&:=\sum_{n_1>n_2>\cdots>n_r>0} \prod_{j=1}^r \frac{1+\varepsilon_{j}(-1)^{n_{j}}}{n_{j}^{k_{j}}}
=\int_0^1 w_{\varepsilon_r}w_0^{k_r-1}\cdots w_{\varepsilon_2\varepsilon_3}w_0^{k_2-1}w_{\varepsilon_1\varepsilon_2}w_0^{k_1-1},
\end{align}
where
\begin{equation*}
w_0(t):=\frac{dt}{t},\quad w_{-1}:=\frac{2dt}{1-t^2},\quad w_1:=\frac{2tdt}{1-t^2}.
\end{equation*}

We define the \emph{multiple $M$-polylogarithm function} for any composition $\bfk=(k_1,k_2,\ldots,k_r)$, $\bfeps=(\eps_1,\ldots,\eps_r)\in\{\pm 1\}^r$ and $|x|\le 1$ with $(k_1,x)\neq (1,1)$ by
\begin{align}\label{defn-mmpls}
\Mi_{\bfk}(\bfeps;x)&:=\sum_{n_1>n_2>\cdots>n_r>0}  x^{n_1}\prod_{j=1}^r \frac{1+\varepsilon_{j}(-1)^{n_{j}}}{n_{j}^{k_{j}}}  \nonumber\\
&=\int_0^x w_{\varepsilon_r}w_0^{k_r-1} w_{\varepsilon_r\varepsilon_{r-1}}w_0^{k_{r-1}-1}\cdots w_{\varepsilon_2\varepsilon_1}w_0^{k_1-1}.
\end{align}
Clearly, $\Mi_{\bfk}(\bfeps;1)=M(\bfk;\bfeps)$ for all $k_1\ge 2$.

Similar to the multiple harmonic sums and multiple harmonic star sums, for a composition $\bfk=(k_1,\ldots,k_r)$, $\bfeps=(\eps_1,\ldots,\eps_r)\in\{\pm 1\}^r$ and positive integer $n$, we may define the \emph{multiple $M$-harmonic sums} and \emph{multiple $M$-harmonic star sums} respectively by
\begin{equation*}
M_n(\bfk;\bfeps):=\sum\limits_{n\geq n_1> \dotsm > n_r>0} \prod_{j=1}^r \frac{1+\eps_{j}(-1)^{n_{j}}}{n_{j}^{k_{j}}} \quad
\text{and}\quad
M_n^\star(\bfk;\bfeps):=\sum\limits_{n\geq n_1\geq  \dotsm \geq n_r\geq 1} \prod_{j=1}^r\frac{1+\eps_{j}(-1)^{n_{j}}}{n_{j}^{k_{j}}} ,
\end{equation*}
where $M_n(\emptyset;\emptyset)=M_n^\star(\emptyset;\emptyset):=1$.

\begin{defn}
For any two compositions of positive integers $\bfk=(k_1,\dotsc,k_r)$ and $\bfl=(l_1,\dotsc,l_p)$, and $\bfet:=(\eta_1,\dotsc,\eta_r)\in\{\pm1,0\}\times\{\pm1\}^{r-1}$ and $\bfeps:=(\varepsilon_1,\dotsc,\varepsilon_p)\in\{\pm1,0\}\times\{\pm1\}^{p-1}$ with $(\eta_1,\eps_1)\neq (0,0)$, define the \emph{convoluted multiple mixed values} by
\begin{align}\label{equ:convMMVs}
M((\bfk;\bfet)\circledast(\bfl;\bfeps)^\star)
&=\sum\limits_{n=1}^\infty \frac{M_{n-1}(\ora\bfk_{\hskip-2pt 2,r};\ora\bfet_{\hskip-2pt 2,r})M^\star_n(\ora\bfl_{\hskip-2pt \hskip-3pt 2,p};\ora\bfeps_{\hskip-2pt  2,p})}{n^{k_1+l_1}} \frac{(1+\varepsilon_1(-1)^n)(1+\eta_1(-1)^{n})}{2}.
\end{align}
\end{defn}

\begin{re} \label{rem:ConvMMV}
The factors at the end of \eqref{equ:convMMVs} imply that \eqref{equ:convMMVs} vanishes if
$\varepsilon_1\ne \eta_1$.
\end{re}

We define three maps: for any $\bfeps=(\eps_1,\ldots,\eps_r)\in \{\pm 1\}^r$,
\begin{align*}
&\bfp(\bfeps)\equiv\bfp(\eps_1,\eps_2,\ldots,\eps_r)=(\eps_1\eps_2\cdots\eps_r,\ldots,\eps_{r-1}\eps_r,\eps_r),\\
&\bfq(\bfeps)\equiv\bfq(\eps_1,\eps_2,\ldots,\eps_r)=(\eps_1\eps_2\cdots\eps_r,\ldots,\eps_{1}\eps_2,\eps_1),\\
&\bfr(\bfeps)\equiv\bfr(\eps_1,\eps_2,\ldots,\eps_r)=(\eps_1,\eps_{1}\eps_2,\ldots,\eps_1\eps_2\cdots\eps_r).
\end{align*}
Set $\bfp(\emptyset)=\bfq(\emptyset)=\bfr(\emptyset):=0$, and
$a\bfeps=(a\eps_1,\dotsc,a\eps_r)$ for any real number $a$.

\begin{thm}\label{thm-ItItmmv}
Let $n=n_0\in\N$,  $\bfm=(m_1,m_2,\ldots,m_p)\in\N^p$ and $(\eps_1,\ldots,\eps_p)\in\{\pm 1\}^p$. Then
\begin{align}\label{equ:Intx-MMVs}
&n\int_0^x t^{n-1} dt w_{\eps_1}w_0^{m_1-1}w_{\eps_2}w_0^{m_2-1}\cdots w_{\eps_p}^{m_p-1}\nonumber\\
=&(-1)^p \sum_{n\geq n_1 \geq \cdots \geq n_p\geq 1} x^{n_p}\prod_{j=1}^p \frac{1+\eps_{j}(-1)^{n_{j-1}+n_{j}}}{n_{j}^{m_{j}}}\nonumber\\
+&\frac1{2} \sum_{j=1}^p (-1)^{j-1}\Mi_{\ola\bfm_{\hskip-2pt j}}\Big(-\bfq(\ora\bfeps_{\hskip-3pt j+1,p}),-1;x\Big) (1-\eps_1\eps_2\cdots\eps_{j}(-1)^n) M_n^\star\Big(\ora\bfm_{\hskip-2pt j-1};-\bfp(\ora\bfeps_{\hskip-2pt 2,j})\Big)\nonumber\\
+&\frac1{2} \sum_{j=1}^p (-1)^{j-1}\Mi_{\ola\bfm_{\hskip-2pt j}}\Big(\bfq(\ora\bfeps_{\hskip-3pt j+1,p}),1;x\Big) (1+\eps_1\eps_2\cdots\eps_{j}(-1)^n) M_n^\star\Big(\ora\bfm_{\hskip-2pt j-1};\bfp(\ora\bfeps_{\hskip-2pt 2,j})\Big).
\end{align}
\end{thm}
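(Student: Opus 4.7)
The plan is to mimic the inductive ``left-to-right'' computation of Theorem \ref{thm-ItI1}, using a partial-fraction-based substitution rule adapted to the level-two forms $\omega_{\pm 1}$. Since $n\int_0^u t^{n-1}dt = u^n$, the innermost integration on the LHS of \eqref{equ:Intx-MMVs} collapses it to $\int_0^x t^n\omega_{\eps_1}\omega_0^{m_1-1}\cdots\omega_{\eps_p}\omega_0^{m_p-1}$. The key algebraic identity I would establish is
\[
t^n\,\omega_\eta \;=\; \omega_{\eta(-1)^n} \;-\; \sum_{k=1}^n \bigl(1+\eta(-1)^{n+k}\bigr)\,t^{k-1}\,dt,
\]
which follows from the decomposition $\omega_\eta=\frac{dt}{1-t}-\eta\frac{dt}{1+t}$ together with the elementary expansions $\frac{t^n}{1-t}=\frac{1}{1-t}-\sum_{k=1}^n t^{k-1}$ and $\frac{t^n}{1+t}=\frac{(-1)^n}{1+t}-\sum_{k=1}^n (-1)^{n+k-1}t^{k-1}$.

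Next, set $I_n^{(j)}(x):=\int_0^x t^n\omega_{\eps_j}\omega_0^{m_j-1}\cdots\omega_{\eps_p}\omega_0^{m_p-1}$ and $A_n^{(j)}(x):=\int_0^x \omega_{\eps_j(-1)^n}\omega_0^{m_j-1}\cdots\omega_{\eps_p}\omega_0^{m_p-1}$, together with the tail convention $I_k^{(p+1)}(x):=x^k$. Substituting the identity above for the leftmost $t^n\omega_{\eps_j}$ in $I_n^{(j)}$ and using the standard evaluation $\int_0^x t^{k-1}dt\,\omega_0^{m_j-1}\omega_{\eps_{j+1}}\cdots=k^{-m_j}I_k^{(j+1)}(x)$ yields the recurrence
\[
I_n^{(j)}(x) \;=\; A_n^{(j)}(x) \;-\; \sum_{k=1}^n \frac{1+\eps_j(-1)^{n+k}}{k^{m_j}}\,I_k^{(j+1)}(x).
\]
Iterating from $j=1$ through $j=p$ expresses $I_n^{(1)}(x)$ as an alternating sum of $p$ ``$A$''-contributions plus the tail
\[
(-1)^p\sum_{n=k_0\geq k_1\geq\cdots\geq k_p\geq 1}\prod_{i=1}^p\frac{1+\eps_i(-1)^{k_{i-1}+k_i}}{k_i^{m_i}}\,x^{k_p},
\]
which already matches the first (series) line on the RHS of \eqref{equ:Intx-MMVs}.

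To finish, I would identify each $A_{k_{j-1}}^{(j)}(x)$ as an $\Mi$-value by matching its 1-form sequence $\bigl(\omega_{\eps_j(-1)^{k_{j-1}}},\omega_{\eps_{j+1}},\ldots,\omega_{\eps_p}\bigr)$ against the defining template for $\Mi_{\ola\bfm_j}$; this gives the sign vector $\eps_j(-1)^{k_{j-1}}\bigl(\bfq(\ora\bfeps_{j+1,p}),1\bigr)$. Crucially, each factor $1+\eps_i(-1)^{k_{i-1}+k_i}$ equals $0$ or $2$, vanishing unless $(-1)^{k_i}=\eps_1\cdots\eps_i(-1)^n$, so on the support of the summand one has $\eps_j(-1)^{k_{j-1}}=\eps_1\cdots\eps_j(-1)^n$, a quantity depending only on $n$. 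Splitting into the two cases $\eps_1\cdots\eps_j(-1)^n=\pm1$ by inserting the indicators $\tfrac12\bigl(1\pm\eps_1\cdots\eps_j(-1)^n\bigr)$ pulls $\Mi$ outside the sum with sign vectors $\pm\bigl(\bfq(\ora\bfeps_{j+1,p}),1\bigr)$. By the same support-equivalence the remaining inner sum equals $M_n^\star\bigl(\ora\bfm_{j-1};(-1)^n\bfr(\ora\bfeps_{1,j-1})\bigr)$, and the pointwise identity $(-1)^n\bfr(\ora\bfeps_{1,j-1})=\eps_1\cdots\eps_j(-1)^n\cdot\bfp(\ora\bfeps_{2,j})$ converts this into $M_n^\star\bigl(\ora\bfm_{j-1};\pm\bfp(\ora\bfeps_{2,j})\bigr)$ in each case, producing exactly the two remaining sums in \eqref{equ:Intx-MMVs}.

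The main obstacle will be this sign bookkeeping: verifying that the summation-index-dependent parities really do collapse on the nonzero support to the clean $n$-dependent constant $\eps_1\cdots\eps_j(-1)^n$, and that the natural sign vector $(-1)^n\bfr(\ora\bfeps_{1,j-1})$ output by the recurrence matches $\pm\bfp(\ora\bfeps_{2,j})$ precisely in the corresponding indicator case. Everything else is routine manipulation of iterated integrals.
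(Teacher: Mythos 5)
Your proposal is correct and follows essentially the same route as the paper: the identity $t^n\omega_\eta=\omega_{\eta(-1)^n}-\sum_{k=1}^n(1+\eta(-1)^{n+k})t^{k-1}dt$ is exactly the paper's key observation (there written as $\tfrac{1-\eps(-1)^n}{2}w_{-1}+\tfrac{1+\eps(-1)^n}{2}w_1$ minus the same sum), and your left-to-right recurrence together with the support argument collapsing the telescoping parities to $\eps_1\cdots\eps_j(-1)^n$ is precisely the paper's second displayed identity. The only difference is that you carry out the sign bookkeeping that the paper leaves to the reader, and your checks of the sign vectors match the stated formula.
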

\begin{proof}
The proof of \eqref{equ:Intx-MMVs} is similar to the proof of \eqref{FII1}.
First observe that
\begin{equation*}
t^n w_{\eps}=\frac{1-\eps(-1)^n}{2}w_{-1}+\frac{1+\eps(-1)^n}{2}w_1-\sum_{m=1}^n (1+\eps(-1)^{n+m})t^{m-1}dt.
\end{equation*}
We also note that $1+\eps(-1)^{n}\ne 0$ if and only if $\eps=(-1)^{n}$ which implies that
\begin{align*}
&(1+\eps_1(-1)^{n+n_1})\cdots (1+\eps_{j-1}(-1)^{n_{j-2}+n_{j-1}})(1+\eps_{j}(-1)^{n_{j-1}})\\
&=(1+\eps_1\eps_2\cdots \eps_{j}(-1)^{n})\cdots (1+\eps_{j-1} \eps_{j}(-1)^{n_{j-2}})(1+\eps_{j}(-1)^{n_{j-1}}).
\end{align*}
The theorem now follows from \eqref{defn-mmvs} and
we leave the details to the interested reader.
\end{proof}

\begin{thm}\label{thm-mmvs}  For any two compositions $\bfk=(k_1,k_2,\ldots,k_r)$ and $\bfm=(m_1,m_2,\ldots,m_p)$, and $\bfet=(\eta_1,\ldots,\eta_r)\in\{\pm 1\}^r, \bfeps=(\eps_1,\ldots,\eps_p)\in\{\pm 1\}^p$,
\begin{align}\label{equ:Intx-KYMMVs}
&M_{m_p,\ldots,m_1,k_1+1,k_2,\ldots,k_r}(\eta_1\eta_2\cdots\eta_r\bfq(\bfeps),\bfp(\bfet);x)\nonumber\\
=&(-1)^p \su \frac{M_{n-1}\big(\ora\bfk_{\hskip-2pt 2,r};\bfp(\ora\bfet_{\hskip-2pt 2,r})\big)}{n^{k_1+1}}(1+\eta_1\cdots \eta_r(-1)^{n})
\sum_{n\geq n_1 \geq \cdots \geq n_p\geq 1} x^{n_p}\prod_{j=1}^p \frac{1+\eps_{j}(-1)^{n_{j-1}+n_{j}}}{n_{j}^{m_{j}}}\nonumber \\
&-\sum_{j=1}^p(-1)^{j} \Mi_{\ola{\bfm}_{j}}\Big(-\bfq(\ora\bfeps_{\hskip-3pt j+1,p}),-1;x\Big)M\Big(\big(\bfk;\bfp(\bfet)\big)\circledast\big(1,\ora\bfm_{\hskip-2pt j-1};-\bfp(\ora\bfeps_{\hskip-3pt j})\big)^\star\Big)\nonumber\\
&-\sum_{j=1}^p(-1)^{j} \Mi_{\ola{\bfm}_{j}}\Big(\bfq(\ora\bfeps_{\hskip-3pt j+1,p}),1;x\Big)M\Big(\big(\bfk;\bfp(\bfet)\big)\circledast\big(1,\ora\bfm_{\hskip-2pt j-1};\bfp(\ora\bfeps_{\hskip-3pt j})\big)^\star\Big).
\end{align}
\end{thm}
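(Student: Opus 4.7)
The plan is to extend the strategy used for the convoluted MZV identity \eqref{KYMZV1} and the $t$-analogue \eqref{t-KYMZVx} to the level-two setting of multiple mixed values. Concretely, I would multiply both sides of the integral identity \eqref{equ:Intx-MMVs} by
\[
\frac{1+\eta_1\eta_2\cdots\eta_r(-1)^n}{n^{k_1+1}}\,M_{n-1}\!\big(\ora\bfk_{\hskip-2pt 2,r};\bfp(\ora\bfet_{\hskip-2pt 2,r})\big),
\]
and sum over $n\ge 1$, then match each side against the integral and series definitions provided by \eqref{defn-mmpls} and \eqref{equ:convMMVs}.

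For the left-hand side, after swapping the sum and the outer $t^{n-1}\,dt$ integration, the resulting inner series collapses, by the series definition of $\Mi$, to
\[
\sum_{n\ge 1}\frac{t^n\bigl(1+\eta_1\cdots\eta_r(-1)^n\bigr)}{n^{k_1}}\,M_{n-1}\!\big(\ora\bfk_{\hskip-2pt 2,r};\bfp(\ora\bfet_{\hskip-2pt 2,r})\big)=\Mi_{k_1,\ora\bfk_{\hskip-2pt 2,r}}\!\big(\bfp(\bfet);t\big),
\]
using $\bfp(\bfet)=(\eta_1\cdots\eta_r,\bfp(\ora\bfet_{\hskip-2pt 2,r}))$. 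Replacing this $\Mi$ by its iterated-integral form \eqref{defn-mmpls} prepends the string $w_{\eta_r}w_0^{k_r-1}\cdots w_{\eta_1}w_0^{k_1}$ to the left of the existing $w_{\eps_1}w_0^{m_1-1}\cdots w_{\eps_p}w_0^{m_p-1}$. A direct check that the adjacent-sign products $\eps'_{j+1}\eps'_j$ prescribed by \eqref{defn-mmpls} telescope to a single $\eta_j$ inside the left block, $\eps_{p+1-j}$ inside the right block, and $\eps_1$ at the seam then identifies the full iterated integral with $M_{m_p,\ldots,m_1,k_1+1,k_2,\ldots,k_r}\!\big(\eta_1\cdots\eta_r\bfq(\bfeps),\bfp(\bfet);x\big)$, the LHS of \eqref{equ:Intx-KYMMVs}.

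For the right-hand side, the $(-1)^p$ piece of \eqref{equ:Intx-MMVs} passes through unchanged and produces the first series on the RHS of \eqref{equ:Intx-KYMMVs}. For each $j=1,\dots,p$, the two remaining pieces factor out the $n$-independent quantity $\Mi_{\ola\bfm_{\hskip-2pt j}}(\pm\bfq(\ora\bfeps_{\hskip-3pt j+1,p}),\pm 1;x)$, leaving the $n$-sum
\[
\su\frac{M_{n-1}\!\big(\ora\bfk_{\hskip-2pt 2,r};\bfp(\ora\bfet_{\hskip-2pt 2,r})\big)\,M_n^\star\!\big(\ora\bfm_{\hskip-2pt j-1};\mp\bfp(\ora\bfeps_{\hskip-2pt 2,j})\big)}{n^{k_1+1}}\cdot\frac{\bigl(1\mp\eps_1\cdots\eps_j(-1)^n\bigr)\bigl(1+\eta_1\cdots\eta_r(-1)^n\bigr)}{2},
\]
which by \eqref{equ:convMMVs} is precisely $M\!\big((\bfk;\bfp(\bfet))\circledast(1,\ora\bfm_{\hskip-2pt j-1};\mp\bfp(\ora\bfeps_{\hskip-3pt j}))^\star\big)$, since the first-component signs $\eta_1\cdots\eta_r$ and $\mp\eps_1\cdots\eps_j$ are exactly what the two parity factors encode.

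The main obstacle is the sign and parity bookkeeping. One has to verify that the adjacent-sign products in the iterated integral telescope correctly at every interior position and at the seam between the two blocks, and that the product $(1\mp\eps_1\cdots\eps_j(-1)^n)(1+\eta_1\cdots\eta_r(-1)^n)/2$ aligns with the weight $(1+\eps'_1(-1)^n)(1+\eta'_1(-1)^n)/2$ prescribed in \eqref{equ:convMMVs} with $\eta'_1=\eta_1\cdots\eta_r$ and $\eps'_1=\mp\eps_1\cdots\eps_j$. Once these identifications are made, the remainder is routine rearrangement of summations.
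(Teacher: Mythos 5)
Your proposal is correct and is essentially the paper's own proof: the paper likewise multiplies \eqref{equ:Intx-MMVs} by $n^{-k_1-1}M_{n-1}\big(\ora\bfk_{\hskip-2pt 2,r};\bfp(\ora\bfet_{\hskip-2pt 2,r})\big)\big(1+\eta_1\cdots\eta_r(-1)^{n}\big)$, sums over $n$, and identifies the two sides via \eqref{defn-mmpls} and \eqref{equ:convMMVs}. The only harmless slip is in your intermediate display, where a $1$-form is suppressed: after absorbing one power of $n$, the innermost factor is $\Mi_{k_1,\ora\bfk_{\hskip-2pt 2,r}}\big(\bfp(\bfet);t\big)\,\frac{dt}{t}$ rather than the bare series, and this extra $\frac{dt}{t}$ is exactly what produces the $w_0^{k_1}$ you correctly prepend, so the seam weight $k_1+1$ comes out right.
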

\begin{proof}
Multiplying \eqref{equ:Intx-MMVs} by $n^{-k_1-1}M_{n-1}\big(\ora\bfk_{\hskip-2pt 2,r};\bfp(\ora\bfet_{\hskip-2pt 2,r})\big) (1+\eta_1\cdots \eta_r(-1)^n)$ and summing up we can obtain the evaluation by \eqref{defn-mmvs}.
\end{proof}

\begin{cor}\label{cor-mmvs}  For any two compositions $\bfk=(k_1,k_2,\ldots,k_r)$ and $\bfm=(m_1,m_2,\ldots,m_p)$, and $\bfet=(\eta_1,\ldots,\eta_r)\in\{\pm 1\}^r, \bfeps=(\eps_1,\ldots,\eps_p)\in\{\pm 1\}^p$,
\begin{align}\label{equ:Intx-KYMMVsCorollary}
&M(m_p,\ldots,m_1,k_1+1,k_2,\ldots,k_r;\eta_1\eta_2\cdots\eta_r\bfq(\bfeps),\bfp(\bfet))\nonumber\\
&=\sum_{j=1}^p(-1)^{j-1} M(\ola{\bfm}_{j};\bfq(\ora\bfeps_{\hskip-3pt j+1,p}),1)M\Big(\big(\bfk;\bfp(\bfet)\big)\circledast\big(1,\ora\bfm_{\hskip-2pt j-1};\bfp(\ora\bfeps_{\hskip-3pt j})\big)^\star\Big)\nonumber\\
&\quad+\sum_{j=1}^p(-1)^{j-1} M(\ola{\bfm}_{j};-\bfq(\ora\bfeps_{\hskip-3pt j+1,p}),-1)M\Big(\big(\bfk;\bfp(\bfet)\big)\circledast\big(1,\ora\bfm_{\hskip-2pt j-1};-\bfp(\ora\bfeps_{\hskip-3pt j})\big)^\star\Big)\nonumber\\
&\quad+2(-1)^p M\Big(\big({\bfk};\bfp(\bfet)\big)\circledast \big(1,{\bfm};0,\eta_1\cdots \eta_r\bfr(\bfeps)\big)^\star\Big).
\end{align}
\end{cor}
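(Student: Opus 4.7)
The plan is to obtain Corollary~\ref{cor-mmvs} as the specialization $x=1$ of Theorem~\ref{thm-mmvs}, with the first (double‐sum) term reassembled into a convoluted MMV. The second and third sums convert directly: by \eqref{defn-mmpls} one has $\Mi_{\ola\bfm_j}(\bfq(\ora\bfeps_{j+1,p}),1;1)=M(\ola\bfm_j;\bfq(\ora\bfeps_{j+1,p}),1)$, and analogously for the sign‐flipped version, provided $m_p\ge 2$ so that every $\Mi$ occurring converges at $x=1$ (this also ensures admissibility of the LHS of the corollary). The sign reshuffle $-(-1)^j=(-1)^{j-1}$ is just bookkeeping.

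The heart of the proof is showing that, at $x=1$, the first term of \eqref{equ:Intx-KYMMVs} equals $2(-1)^p\,M\!\left((\bfk;\bfp(\bfet))\circledast(1,\bfm;0,\eta_1\cdots\eta_r\bfr(\bfeps))^\star\right)$. Set $\beta:=\eta_1\cdots\eta_r$ and, for fixed $n_0=n,n_1,\dots,n_p$, put $b_j:=(-1)^{n_j}\in\{\pm1\}$, so $b_0=(-1)^n$. The claim reduces to the pointwise identity
\begin{equation*}
(1+\beta b_0)\prod_{j=1}^p(1+\eps_j b_{j-1}b_j)\;=\;(1+\beta b_0)\prod_{j=1}^p\bigl(1+\beta\eps_1\cdots\eps_j\,b_j\bigr).
\end{equation*}
The $(1+\beta b_0)$ factor forces $b_0=\beta$; under that constraint each factor on either side is a $\{0,2\}$-valued expression, and a short induction on $j$ shows that both products are nonzero (hence equal to $2^p$) precisely when $b_j=\beta\eps_1\cdots\eps_j$ for all $j=1,\dots,p$. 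I expect this combinatorial identity to be the main—and essentially the only—nontrivial step.

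Granting the identity, the inner sum in the first term of \eqref{equ:Intx-KYMMVs} at $x=1$ transforms into
\begin{equation*}
(1+\beta(-1)^n)\sum_{n\ge n_1\ge\cdots\ge n_p\ge 1}\prod_{j=1}^p\frac{1+\beta\eps_1\cdots\eps_j(-1)^{n_j}}{n_j^{m_j}}=(1+\beta(-1)^n)\,M_n^\star(\bfm;\beta\bfr(\bfeps)).
\end{equation*}
Plugging this into the first line of \eqref{equ:Intx-KYMMVs}, multiplying by $(-1)^p/n^{k_1+1}$ and $M_{n-1}(\ora\bfk_{\hskip-2pt 2,r};\bfp(\ora\bfet_{\hskip-2pt 2,r}))$, and summing over $n$, I read off exactly the series appearing in definition \eqref{equ:convMMVs} for the pair $(\bfk;\bfp(\bfet))$ and $(1,\bfm;0,\beta\bfr(\bfeps))$: the outer factor $(1+0\cdot(-1)^n)(1+\beta(-1)^n)/2$ accounts for the halving, so the whole contribution is $2(-1)^p M\!\left((\bfk;\bfp(\bfet))\circledast(1,\bfm;0,\eta_1\cdots\eta_r\bfr(\bfeps))^\star\right)$, matching the final term of \eqref{equ:Intx-KYMMVsCorollary}. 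Combining with the rewritten $j=1,\dots,p$ sums completes the proof.
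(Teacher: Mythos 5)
Your proposal is correct and follows essentially the same route as the paper: specialize Theorem \ref{thm-mmvs} at $x=1$, convert each $\Mi(\,\cdot\,;1)$ to an $M$-value, and use the pointwise product identity $(1+\beta(-1)^n)\prod_{j}(1+\eps_j(-1)^{n_{j-1}+n_j})=(1+\beta(-1)^n)\prod_{j}(1+\beta\eps_1\cdots\eps_j(-1)^{n_j})$ to recognize the first term as $2(-1)^p M\big((\bfk;\bfp(\bfet))\circledast(1,\bfm;0,\beta\bfr(\bfeps))^\star\big)$, which is exactly the identity the paper invokes. Your short induction proving that identity, and the convergence caveat $m_p\ge 2$, merely make explicit what the paper leaves as a ``straight-forward calculation.''
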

\begin{proof}
Letting $x=1$ in \eqref{equ:Intx-KYMMVs} and noting the fact that
\begin{align*}
&(1+\eta_1\cdots \eta_r(-1)^n)(1+\eps_1(-1)^{n+n_1})\cdots (1+\eps_p(-1)^{n_{p-1}+n_p})\\
&=(1+\eta_1\cdots \eta_r(-1)^n)(1+\eta_1\cdots \eta_r\eps_1(-1)^{n_1})\cdots (1+\eta_1\cdots \eta_r\eps_1\cdots \eps_p(-1)^{n_p}),
\end{align*}
we may obtain the desired result by a straight-forward calculation,.
\end{proof}

Setting $\eta_r=-1$ and $\eta_{j}=\eps_i=1\ (1\le j<r,1\le i\le p)$ in the corollary we can recover formula \eqref{t-KYMZVxx} once again
since (i) the second factor in the first sum on the right-hand side of \eqref{equ:Intx-KYMMVsCorollary} is always zero according to Remark~ \ref{rem:ConvMMV}, and (ii) we can replace 0 in the last term of \eqref{equ:Intx-KYMMVsCorollary} by $-1$ and remove the
factor 2 in the front by Remark~ \ref{rem:ConvMMV} again.

\section{Parametric Multiple Polylogarithms}\label{SFPMPLS}

In this section, we define the parametric MPL (star) function and parametric multiple harmonic (star) sum, and establish some new identities. In particular, we extend the results of Sakugawa-Seki \cite[Thms. 2.10 and 2.13]{SS2016}.

\begin{defn}
For any complex parameter $a$, $\bfk=(k_1,\ldots,k_r)\in\N^r$, and complex variables $\bfx=(x_1,\dotsc,x_r)\in\gD^r$, we define the \emph{parametric multiple harmonic sum} and \emph{parametric multiple harmonic star sum} with $r$-variable by
\begin{align*}
\zeta_n(\bfk;\bfx;a):=\sum\limits_{n\geq n_1> \cdots > n_r\geq 1} \prod_{j=1}^r \frac{x_{j}^{n_{j}+a}}{(n_{j}+a)^{k_{j}}}\quad
\text{and}\quad
\zeta^\star_n(\bfk;\bfx;a):=\sum\limits_{n\geq n_1\geq \cdots \geq n_r\geq 1} \prod_{j=1}^r \frac{x_{j}^{n_{j}+a}}{(n_{j}+a)^{k_{j}}},
\end{align*}
respectively, where if $n<k$ then ${\zeta_n}(\bfk;\bfx;a):=0$ and $\ze_n(\emptyset;\emptyset;a)=\ze^\star_n(\emptyset;\emptyset;a):=1$.
For $(k_1,x_1)\neq (1,1)$, we define the \emph{parametric multiple polylogarithm function} and \emph{parametric multiple polylogarithm star function} with $r$-variable by
\begin{align*}
\Li_\bfk(\bfx;a):=\lim_{n\to\infty} \zeta_n(\bfk;\bfx;a), \quad
\text{and} \quad
\Li^\star_\bfk(\bfx;a):=\lim_{n\to\infty}\zeta^\star_n(\bfk;\bfx;a).
\end{align*}
\end{defn}

\begin{lem}\label{lem-ss2016} \emph{(\cite[Thm. 2.11]{SS2016})} For any composition ${\bfk}=(k_1,\ldots,k_r)$ and $\bfx=(x_1,\dotsc,x_r)\in\gD^r$, we have
\begin{align}\label{eq-ss2016}
\sum_{j=0}^r (-1)^j \ze_n(\ora\bfk_{\hskip-2pt j};\ora\bfx_{\hskip-2pt j})\ze^\star_n(\ola\bfk_{\hskip-2pt j+1}; \ola\bfx_{\hskip-2pt j+1})=0,
\end{align}
where $\ze_n(\emptyset;\emptyset)=\ze^\star_n(\emptyset;\emptyset):=1$.
\end{lem}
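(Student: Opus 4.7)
The plan is to interpret each product $A_j := \ze_n(\ora\bfk_{\hskip-2pt j};\ora\bfx_{\hskip-2pt j})\,\ze^\star_n(\ola\bfk_{\hskip-2pt j+1};\ola\bfx_{\hskip-2pt j+1})$ as a single sum over $r$-tuples of positive integers bounded by $n$, and then to show that the alternating sum $\sum_{j=0}^r(-1)^j A_j$ vanishes tuple by tuple.

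First I would expand both factors. The factor $\ze_n(\ora\bfk_{\hskip-2pt j};\ora\bfx_{\hskip-2pt j})$ is the sum of $\prod_{i=1}^{j} x_i^{n_i}/n_i^{k_i}$ over $n\geq n_1>\cdots>n_j\geq 1$, while $\ze^\star_n(\ola\bfk_{\hskip-2pt j+1};\ola\bfx_{\hskip-2pt j+1})=\ze^\star_n(k_r,\ldots,k_{j+1};x_r,\ldots,x_{j+1})$, after the change of variables $m_s\leftrightarrow n_{r+1-s}$, is the sum of $\prod_{i=j+1}^{r} x_i^{n_i}/n_i^{k_i}$ over $n\geq n_r\geq n_{r-1}\geq\cdots\geq n_{j+1}\geq 1$. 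Since the two inequality chains share only the bound $n\geq n_i\geq 1$, we obtain
\[
A_j=\sum_{\bfn\in T_j}\prod_{i=1}^r\frac{x_i^{n_i}}{n_i^{k_i}},
\]
where $T_j\subseteq\{1,\ldots,n\}^r$ is the set of tuples $\bfn=(n_1,\ldots,n_r)$ satisfying both chains simultaneously.

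Next I would fix a tuple $\bfn\in\{1,\ldots,n\}^r$ and let $D(\bfn):=\{\,i\in\{1,\ldots,r-1\}:n_i>n_{i+1}\,\}$ be its descent set. Unwinding the defining inequalities of $T_j$ shows $\bfn\in T_j$ if and only if $\{1,\ldots,j-1\}\subseteq D(\bfn)\subseteq\{1,\ldots,j\}$. Consequently the set of admissible $j$ is nonempty only when $D(\bfn)$ is an initial segment $\{1,\ldots,L\}$ of $\{1,\ldots,r-1\}$ for some $L\in\{0,1,\ldots,r-1\}$, in which case the admissible indices are exactly $j=L$ and $j=L+1$; a tuple whose descent set is not an initial segment belongs to no $T_j$ and contributes $0$.

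Swapping the order of summation in $\sum_{j=0}^r(-1)^j A_j$, the contribution of each tuple with $D(\bfn)=\{1,\ldots,L\}$ is $(-1)^L+(-1)^{L+1}=0$, while all other tuples contribute $0$, so the whole alternating sum vanishes. The only delicate point is the descent-set characterization of membership in $T_j$; this reduces to a careful reading of the two chains of strict/non-strict inequalities meeting at position $j$, and presents no genuine obstacle.
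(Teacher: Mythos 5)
Your argument is correct. Expanding both factors and reindexing the star sum as you do, the product $A_j$ is indeed the sum of $\prod_{i=1}^r x_i^{n_i}/n_i^{k_i}$ over the tuples $\bfn\in\{1,\dots,n\}^r$ with $n_1>\dots>n_j$ and $n_{j+1}\le\dots\le n_r$ (no constraint between $n_j$ and $n_{j+1}$), your characterization $\bfn\in T_j\iff\{1,\dots,j-1\}\subseteq D(\bfn)\subseteq\{1,\dots,j\}$ is right, and for a descent set equal to an initial segment $\{1,\dots,L\}$ exactly the two indices $j=L$ and $j=L+1$ survive and cancel, the boundary cases $j=0$ and $j=r$ included; tuples whose descent set is not an initial segment never occur. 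This is, however, a genuinely different route from the paper: the paper gives no proof of Lemma \ref{lem-ss2016} at all, quoting it from Sakugawa--Seki \cite[Thm.~2.11]{SS2016}, and the only argument of this type it carries out is the proof of the parametric generalization, Theorem \ref{thm-pmhns}, which proceeds by induction on the depth $r$, peeling off the outer summation variables and feeding the resulting sums back into the induction hypothesis. Your tuple-by-tuple cancellation is more elementary and self-contained, and it has a concrete extra payoff: since replacing each weight $x_i^{n_i}/n_i^{k_i}$ by $x_i^{n_i+a}/(n_i+a)^{k_i}$ (or by any function of $n_i$ alone) does not affect the descent-set combinatorics, the same argument proves \eqref{eq-v2016}, i.e.\ Theorem \ref{thm-pmhns}, verbatim and without induction, whereas the paper's inductive scheme is what one must imitate if one only wants to manipulate the nested sums directly. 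The one point worth writing out carefully in a final version is the reindexing $m_s\leftrightarrow n_{r+1-s}$ of the star factor, so that the reader sees why the two chains meet at position $j$ with no inequality imposed there.
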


\begin{thm}\label{thm-PMPLs1}
For any composition ${\bfk}=(k_1,\ldots,k_r)$, $\bfx=(x_1,\dotsc,x_r)\in\gD^r$ and any $l\in\N_0,n\in\N$, we have
\begin{align}\label{eq-PMPLs1}
&\sum_{n\geq n_1>\cdots>n_r>0} \prod_{j=1}^r \frac{x_{j}^{n_{j}+l}}{(n_{j}+l)^{k_{j}}}
=(-1)^r \sum_{j=0}^r (-1)^j\ze_{n+l}(\ora\bfk_{\hskip-2pt j};\ora\bfx_{\hskip-2pt j}) \ze^\star_l(\ola\bfk_{\hskip-2pt j+1}; \ola\bfx_{\hskip-2pt j+1}).
\end{align}
\end{thm}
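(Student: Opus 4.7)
The plan is to induct on the depth $r$ of $\bfk$. The base case $r=1$ is an immediate check: after the substitution $m_1 = n_1 + l$, the LHS reduces to $\ze_{n+l}(k_1;x_1) - \ze_l(k_1;x_1)$, which matches the RHS because $\ze^\star_l(k_1;x_1) = \ze_l(k_1;x_1)$ in depth one.

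For the inductive step, I would first exploit the observation that in any tuple $(m_1 > m_2 > \cdots > m_r)$ contributing to $\ze_{n+l}(\bfk;\bfx)$, strict decrease forces the indices with $m_i > l$ to form a prefix $\{1,\dots,j\}$. Partitioning the sum by this $j$ yields
\[
\ze_{n+l}(\bfk;\bfx) \;=\; \ze_n(\bfk;\bfx;l) \;+\; \sum_{j=0}^{r-1}\ze_n(\ora\bfk_j;\ora\bfx_j;l)\,\ze_l(\ora\bfk_{j+1,r};\ora\bfx_{j+1,r}),
\]
with the convention $\ze_n(\emptyset;\emptyset;l)=1$. Applying the induction hypothesis to each $\ze_n(\ora\bfk_j;\ora\bfx_j;l)$ with $j<r$ and swapping the order of summation reduces the theorem to the auxiliary identity
\[
\sum_{j=i}^{r}(-1)^{j}\,\ze^\star_l(\ola\bfk_{i+1,j};\ola\bfx_{i+1,j})\,\ze_l(\ora\bfk_{j+1,r};\ora\bfx_{j+1,r}) = 0, \qquad 0 \le i \le r-1.
\]

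The hard part will be establishing this auxiliary identity, which is a close variant of Lemma \ref{lem-ss2016} in which both the positions of $\ze$ versus $\ze^\star$ and the forward/reversed orderings are swapped. I propose to prove it by the same stuffle (quasi-shuffle) manipulation that underlies Lemma \ref{lem-ss2016}: expand each $\ze^\star_l(\ola\bfk_{i+1,j};\ola\bfx_{i+1,j})$ as a sum of ordinary $\ze_l$'s obtained by collapsing consecutive equalities, multiply through by the $\ze_l(\ora\bfk_{j+1,r};\ora\bfx_{j+1,r})$ factor via the stuffle product, and verify that the non-diagonal terms telescope in $j$. The depth-two case already encodes the full mechanism through the two-factor stuffle $\ze_l(k_1;x_1)\ze_l(k_2;x_2) = \ze_l(k_1,k_2;x_1,x_2) + \ze_l(k_2,k_1;x_2,x_1) + \ze_l(k_1+k_2;x_1x_2)$, and a nested induction on $r-i$ extends it to arbitrary length. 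With the auxiliary identity in hand, the inner $j$-sum collapses to $(-1)^{r+1}\ze^\star_l(\ola\bfk_{i+1};\ola\bfx_{i+1})$; substituting back and absorbing the leading $\ze_{n+l}(\bfk;\bfx)$ as the $i=r$ contribution (using $\ze^\star_l(\emptyset;\emptyset)=1$) produces exactly the claimed formula.
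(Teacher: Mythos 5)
Your argument is correct, but it follows a genuinely different route from the paper's. The paper's proof peels off only the outermost index $n_1$, applies the depth-$(r-1)$ case to the inner sum, interchanges the two summations so that the $n_1$-sum evaluates to $\ze_{n+l}(\ora\bfk_{j};\ora\bfx_{j})-\ze_{l}(\ora\bfk_{j};\ora\bfx_{j})$, and then quotes Lemma \ref{lem-ss2016} verbatim (with $n$ replaced by $l$) to turn the leftover $\ze_l$-terms into the missing $j=0$ term; no new identity is required. You instead decompose $\ze_{n+l}(\bfk;\bfx)$ according to how many summation indices exceed $l$, apply the induction hypothesis at every depth $j<r$ simultaneously, and reduce the theorem to the auxiliary identity $\sum_{j=i}^{r}(-1)^{j}\ze^\star_l(\ola\bfk_{i+1,j};\ola\bfx_{i+1,j})\,\ze_l(\ora\bfk_{j+1,r};\ora\bfx_{j+1,r})=0$; that reduction is carried out correctly. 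Note, however, that this auxiliary identity is not Lemma \ref{lem-ss2016} after a relabeling: applying the lemma to the reversed composition gives the version with the star on the forward-ordered prefix and the ordinary sum on the reverse-ordered suffix, and since reversal changes each factor individually, your statement is a genuinely distinct (mirror) identity that must be proved separately. It is true, and the stuffle/telescoping induction you sketch (and check in depth two) does establish it; conceptually it is the other antipode--convolution identity of the quasi-shuffle Hopf algebra, which passes to partial sums because $w\mapsto\ze_l(w)$ with a fixed upper bound $l$ respects the stuffle product, and it also admits exactly the same inductive proof as Sakugawa--Seki's lemma. The trade-off: the paper's one-index recursion lets it invoke the already-stated Lemma \ref{lem-ss2016} and keeps the proof minimal, while your threshold decomposition is structurally more transparent and yields the mirror lemma as a by-product, at the cost of having to write out its proof in full rather than leaving it as a sketch.
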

\begin{proof}
We proceed by induction on the depth $r$. When the depth is $1$, the left hand side is $\sum_{m=1}^n\frac{x_1^{m+l}}{(m+l)^{k_1}}$ and the right-hand side is $\ze_{n+l}(k_1;x_1)-\ze^\star_l(k_1;x_1)=\sum_{m=l+1}^{n+l}\frac{x_1^m}{m^{k_1}}=\sum_{m=1}^n\frac{x_1^{m+l}}{(m+l)^{k_1}}$. Hence the case $r=1$ is proved.

Assume $r\ge 2$ and the theorem holds for lower depths. By inductive assumption we have
\begin{align*}
\text{LHS of \eqref{eq-PMPLs1}}
&=\sum_{n_1=1}^n \frac{x^{n_1+l}}{(n_1+l)^{k_1}}(-1)^{r-1} \sum_{j=0}^{r-1} (-1)^{j} \ze_{n_1+l-1}(\ora\bfk_{\hskip-2pt  2,j+1};\ora\bfx_{\hskip-2pt 2,j+1})\ze^\star_l(\ola\bfk_{\hskip-2pt j+2}; \ola\bfx_{\hskip-2pt j+2})\\
&=(-1)^{r} \sum_{j=1}^{r}  (-1)^j \left(\sum_{n_1=1}^n \frac{x^{n_1+l}}{(n_1+l)^{k_1}}\ze_{n_1+l-1}(\ora\bfk_{\hskip-2pt  2,j};\ora\bfx_{\hskip-2pt 2,j})\right)\ze^\star_l(\ola\bfk_{\hskip-2pt j+1}; \ola\bfx_{\hskip-2pt j+1})\\
&=(-1)^{r} \sum_{j=1}^{r} (-1)^j \left(\ze_{n+l}(\ora\bfk_{\hskip-2pt j};\ora\bfx_{\hskip-2pt j})-\ze_{l}(\ora\bfk_{\hskip-2pt j};\ora\bfx_{\hskip-2pt j})\right)
\ze^\star_l(\ola\bfk_{\hskip-2pt j+1}; \ola\bfx_{\hskip-2pt j+1})\\
&=(-1)^{r} \sum_{j=0}^{r} (-1)^j \ze_{n+l}(\ora\bfk_{\hskip-2pt j};\ora\bfx_{\hskip-2pt j})\ze^\star_l(\ola\bfk_{\hskip-2pt j+1}; \ola\bfx_{\hskip-2pt j+1}),
\end{align*}
where we used the Lemma \ref{lem-ss2016} in the last step. This completes the proof of Theorem \ref{thm-PMPLs1}.
\end{proof}

\begin{thm}\label{thm-PMPLs2}
For any composition ${\bfk}=(k_1,\ldots,k_r)$, $\bfx=(x_1,\dotsc,x_r)\in\gD^r$ and any $l\in\N_0,n\in\N$, we have
\begin{align}\label{eq-PMPLs2}
&\sum_{n\geq n_1\geq n_2\geq \cdots\geq n_r>0}  \prod_{j=1}^r \frac{x_{j}^{n_{j}+l}}{(n_{j}+l)^{k_{j}}}
=(-1)^r \sum_{j=0}^r (-1)^j\ze^\star_{n+l}(\ora\bfk_{\hskip-2pt j};\ora\bfx_{\hskip-2pt j}) \ze_l(\ola\bfk_{\hskip-2pt j+1}; \ola\bfx_{\hskip-2pt j+1}).
\end{align}
\end{thm}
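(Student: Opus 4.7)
The proof will run exactly parallel to that of Theorem~\ref{thm-PMPLs1}, with the roles of $\zeta_n$ and $\zeta_n^\star$ systematically interchanged; induction on the depth $r$ is the natural approach.

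For the base case $r=1$, both the left-hand side and the right-hand side collapse to $\sum_{m=1}^n \frac{x_1^{m+l}}{(m+l)^{k_1}}$, since $\zeta^\star_{n+l}(k_1;x_1)-\zeta_l(k_1;x_1)=\sum_{m=l+1}^{n+l}\frac{x_1^m}{m^{k_1}}$.

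For the inductive step, I would peel off the outermost summation variable $n_1$ and apply the inductive hypothesis of \eqref{eq-PMPLs2} to the inner weakly-decreasing sum $\sum_{n_1\ge n_2\ge\cdots\ge n_r>0}$ with $n$ replaced by $n_1$, giving
\begin{align*}
\text{LHS of \eqref{eq-PMPLs2}}
&=\sum_{n_1=1}^n\frac{x_1^{n_1+l}}{(n_1+l)^{k_1}}(-1)^{r-1}\sum_{j=0}^{r-1}(-1)^j \ze^\star_{n_1+l}(\ora\bfk_{\hskip-2pt 2,j+1};\ora\bfx_{\hskip-2pt 2,j+1})\ze_l(\ola\bfk_{\hskip-2pt j+2};\ola\bfx_{\hskip-2pt j+2}).
\end{align*}
Interchanging the order of summation, I then use the elementary identity
\begin{equation*}
\sum_{n_1=1}^n\frac{x_1^{n_1+l}}{(n_1+l)^{k_1}}\ze^\star_{n_1+l}(\ora\bfk_{\hskip-2pt 2,j+1};\ora\bfx_{\hskip-2pt 2,j+1})=\ze^\star_{n+l}(\ora\bfk_{\hskip-2pt j+1};\ora\bfx_{\hskip-2pt j+1})-\ze^\star_l(\ora\bfk_{\hskip-2pt j+1};\ora\bfx_{\hskip-2pt j+1}),
\end{equation*}
which holds because prepending $(k_1,x_1)$ to a weakly-decreasing star sum with top index $\le n_1+l$ yields precisely the star sum indexed by $l+1\le m_1\le n+l\ge m_2\ge\cdots$. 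Reindexing $j'=j+1$ converts the result into the shape of the right-hand side of \eqref{eq-PMPLs2}, minus a boundary contribution of the form $(-1)^r\sum_{j=1}^r (-1)^{j}\ze^\star_l(\ora\bfk_{\hskip-2pt j};\ora\bfx_{\hskip-2pt j})\ze_l(\ola\bfk_{\hskip-2pt j+1};\ola\bfx_{\hskip-2pt j+1})$.

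The main obstacle is that to absorb this boundary term back into the formula (completing the $j=0$ slot and eliminating the remaining sum), I need the ``star-swapped'' analogue of Lemma~\ref{lem-ss2016}, namely
\begin{equation*}
\sum_{j=0}^r(-1)^j\ze^\star_l(\ora\bfk_{\hskip-2pt j};\ora\bfx_{\hskip-2pt j})\ze_l(\ola\bfk_{\hskip-2pt j+1};\ola\bfx_{\hskip-2pt j+1})=0.
\end{equation*}
This is not literally Lemma~\ref{lem-ss2016}, but it follows by applying that lemma to the reversed composition $(k_r,\dotsc,k_1)$ with arguments $(x_r,\dotsc,x_1)$ and changing index $j\mapsto r-j$; this swaps $\ora{(\cdot)}_{\hskip-2pt j}$ with $\ola{(\cdot)}_{\hskip-2pt r-j+1}$ and produces exactly the identity above after multiplying by $(-1)^r$. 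Once this lemma is in hand the boundary term vanishes, and the induction closes.
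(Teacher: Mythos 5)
Your proposal is correct and is essentially the argument the paper intends: it mirrors the induction used for Theorem \ref{thm-PMPLs1} (peel off $n_1$, telescope via $\ze^\star_{n+l}-\ze^\star_l$, reindex), which is exactly what the paper means by ``completely similar''. Your one added ingredient --- deducing the star-swapped form $\sum_{j=0}^r(-1)^j\ze^\star_l(\ora\bfk_{\hskip-2pt j};\ora\bfx_{\hskip-2pt j})\ze_l(\ola\bfk_{\hskip-2pt j+1};\ola\bfx_{\hskip-2pt j+1})=0$ of Lemma \ref{lem-ss2016} by applying it to the reversed composition and substituting $j\mapsto r-j$ --- is the right way to close the induction and is checked correctly.
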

\begin{proof}
The proof is completely similar to the proof of Theorem \ref{thm-PMPLs1} and is thus omitted.
\end{proof}

Observe that
\begin{align}
&n\int_0^x t^{n-1}dt \frac{dt}{1-\sigma_1t} \left(\frac{dt}{t}\right)^{m_1-1}\frac{dt}{1-\sigma_2t} \left(\frac{dt}{t}\right)^{m_2-1}\cdots \frac{dt}{1-\sigma_pt}\left(\frac{dt}{t}\right)^{m_p-1}\nonumber\\
&=\frac{\Li_{m_p,m_{p-1},\ldots,m_1}\left(\sigma_px,\frac{\sigma_{p-1}}{\sigma_p},\ldots,\frac{\sigma_1}{\sigma_{2}};n \right)}{\sigma_1^{n+1}\sigma_2\cdots \sigma_p}.
\end{align}
Hence,taking $n\rightarrow \infty$ in \eqref{eq-PMPLs1}, $(k_1,k_2,\ldots,k_r)\rightarrow (m_p,m_{p-1},\ldots,m_1)$ and $(x_1,x_2,\ldots,x_r)\rightarrow \left(\sigma_px,\frac{\sigma_{p-1}}{\sigma_p},\ldots,\frac{\sigma_1}{\sigma_{2}} \right)$ then replacing $l$ by $n$, we obtain formula \eqref{FII2}.

The next result is a parametric generalization of Lemma \ref{lem-ss2016}.
\begin{thm}\label{thm-pmhns}
For any complex parameter $a$, $\bfk=(k_1,\ldots,k_r)\in\N^r$, and complex variables $\bfx=(x_1,\dotsc,x_r)\in\gD^r$, we have
\begin{align}\label{eq-v2016}
\sum_{j=0}^r (-1)^j \ze_n(\ora\bfk_{\hskip-2pt j};\ora\bfx_{\hskip-2pt j};a)\ze^\star_n(\ola\bfk_{\hskip-2pt j+1}; \ola\bfx_{\hskip-2pt j+1};a)=0,
\end{align}
where $\ze_n(\emptyset;\emptyset;a)=\ze^\star_n(\emptyset;\emptyset;a):=1$.
\end{thm}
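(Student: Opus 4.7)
My plan is to reduce the parametric identity to a purely combinatorial statement on indicators, and then verify that combinatorial statement directly. The key point is that the weight $x_i^{n_i+a}/(n_i+a)^{k_i}$ at index $n_i$ enters each summand of
\[A_j := \ze_n(\ora\bfk_{\hskip-2pt j};\ora\bfx_{\hskip-2pt j};a)\,\ze^\star_n(\ola\bfk_{\hskip-2pt j+1};\ola\bfx_{\hskip-2pt j+1};a)\]
in exactly the same way for every $j$, so only the combinatorial order constraints on the tuple $\bfn=(n_1,\ldots,n_r)$ distinguish one $A_j$ from another.

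First, I would rewrite each $A_j$ as an $r$-fold sum over $\bfn\in\{1,\ldots,n\}^r$, weighted by $\prod_{i=1}^r x_i^{n_i+a}/(n_i+a)^{k_i}$, subject to the constraint $\chi_j(\bfn):=\mathbf{1}[n_1>\cdots>n_j]\cdot\mathbf{1}[n_{j+1}\leq\cdots\leq n_r]$. Interchanging the $j$-summation with the $\bfn$-summation, the theorem reduces to the indicator identity
\[\sum_{j=0}^r (-1)^j \chi_j(\bfn)=0 \quad\text{for every }\bfn\in\{1,\ldots,n\}^r.\]
Note that this identity involves neither $\bfk$, $\bfx$, nor $a$; it is the same combinatorial skeleton that underlies Lemma \ref{lem-ss2016}.

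Second, I would verify the indicator identity directly. For a fixed $\bfn$, set $\alpha:=\max\{j\in\{1,\ldots,r\}:n_1>\cdots>n_j\}$ and $\beta:=\min\{j\in\{1,\ldots,r\}:n_j\leq\cdots\leq n_r\}$; then $\chi_j(\bfn)=1$ precisely when $\beta-1\leq j\leq\alpha$. A short argument shows $\beta\geq\alpha$ always (if $\alpha\geq 2$ and $\beta\leq\alpha-1$, then $n_{\alpha-1}\leq n_\alpha$ contradicts the strict-decreasing prefix defining $\alpha$), and that $\beta=\alpha+1$ is impossible: if $\alpha<r$, maximality of $\alpha$ forces $n_\alpha\leq n_{\alpha+1}$, which combines with the weak-increasing tail from $\alpha+1$ to yield a weak-increasing tail from $\alpha$, contradicting $\beta=\alpha+1$; and if $\alpha=r$, then $\beta=r+1$ lies outside the definition range. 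So either $\beta\geq\alpha+2$, producing an empty valid range, or $\beta=\alpha$, producing the two-element range $\{\alpha-1,\alpha\}$ whose alternating sum cancels. In both cases $\sum_j(-1)^j\chi_j(\bfn)=0$.

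The main obstacle is the case analysis ruling out $\beta=\alpha+1$; this is precisely where the cancellation genuinely uses the interplay between the strict-decreasing and weak-increasing conditions, and everything else is routine bookkeeping. As a shortcut one can instead view Lemma \ref{lem-ss2016} itself as a polynomial identity in $\bfx$: the coefficient of $\prod_i x_i^{m_i}$ on its left-hand side equals $\prod_i m_i^{-k_i}\cdot\sum_j(-1)^j\chi_j(\bfm)$, so the indicator identity is already implicit in \cite{SS2016}. The parametric reweighting $m_i^{-k_i}\mapsto (m_i+a)^{-k_i}$ together with the overall factor $\prod_i x_i^a$ then finishes the proof without a separate case-check.
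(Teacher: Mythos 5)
Your proof is correct, but it takes a genuinely different route from the paper. The paper proves \eqref{eq-v2016} by induction on the depth $r$: for $r\ge 3$ it splits the $j$-th product into a double sum over the outer indices $n_1$ and $n_r$, separates the regions $n_r>n_1$ and $n_1\ge n_r$, applies the recurrences expressing $\ze_{n_r-1}$ and $\ze_{n_1-1}$ in terms of $\ze_{n_r}$ and $\ze_{n_1}$, and recognizes the four resulting sums as depth-$(r-1)$ instances of the same identity (the cases $r=1,2$ being checked directly). You instead expand every product $\ze_n(\ora\bfk_{j};\ora\bfx_{j};a)\,\ze^\star_n(\ola\bfk_{j+1};\ola\bfx_{j+1};a)$ over $\bfn\in\{1,\dots,n\}^r$ with the common weight $\prod_i x_i^{n_i+a}(n_i+a)^{-k_i}$, interchange the finite summations, and reduce the theorem to the pointwise identity $\sum_{j=0}^r(-1)^j\chi_j(\bfn)=0$, which you verify by locating the maximal strictly decreasing prefix $\alpha$ and the minimal weakly increasing tail $\beta$ and showing that either $\beta=\alpha$ (two consecutive surviving $j$'s, which cancel) or $\beta\ge\alpha+2$ (no surviving $j$); your exclusions of $\beta\le\alpha-1$ and of $\beta=\alpha+1$ are both argued correctly. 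This buys transparency: it makes plain that the identity involves neither $a$, $\bfk$ nor $\bfx$ but is purely a statement about interleaving strict and weak chains, and, as you note, the same indicator identity can be read off from Lemma \ref{lem-ss2016} by extracting the coefficient of $x_1^{m_1}\cdots x_r^{m_r}$ (using that a polynomial vanishing on $\gD^r$ vanishes identically), so the parametric reweighting is formally automatic; the paper's induction, by contrast, stays within standard multiple-harmonic-sum manipulations and runs parallel to its proof of Theorem \ref{thm-PMPLs1}, at the cost of heavier bookkeeping. The only caveat, shared with the paper's own statement, is that $a$ must avoid $\{-1,\dots,-n\}$ so that no denominator $n_i+a$ vanishes.
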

\begin{proof}
We proceed with induction on $r$. The case $r=1$ is trivial and the case $r=2$ is straight-forward.
Assuming $r\ge 3$, we have
\begin{align*}
&\sum_{j=0}^r (-1)^j \ze_n(\ora\bfk_{\hskip-2pt j};\ora\bfx_{\hskip-2pt j};a)\ze^{\star}_n(\ola\bfk_{\hskip-2pt j+1}; \ola\bfx_{\hskip-2pt j+1};a)\\
=&\ze^{\star}_n(\ola\bfk;\ola\bfx;a)+(-1)^r \ze_n(\bfk;\bfx;a)\\
+&\sum_{j=1}^{r-1} (-1)^j \sum_{n_1=1}^{n} \frac{x_1^{n_1+a}}{(n_1+a)^{k_1}} \ze_{n_1-1}(\ora\bfk_{\hskip-2pt 2,j};\ora\bfx_{\hskip-2pt 2,j};a)
\sum_{n_r=1}^{n} \frac{x_r^{n_r+a}}{(n_r+a)^{k_r}} \ze^{\star}_{n_r}(\ola\bfk_{\hskip-2pt j+1,r-1}; \ola\bfx_{\hskip-2pt j+1,r-1};a)\\
=&\ze^{\star}_n(\ola{\bfk};\ola{\bfx};a)+(-1)^r \ze_n(\bfk;\bfx;a)\\
+&
 \sum_{j=1}^{r-1} (-1)^j\sum_{n\ge n_r>n_1>0}
\frac{x_1^{n_1+a}}{(n_1+a)^{k_1}} \ze_{n_1-1}(\ora\bfk_{\hskip-2pt 2,j};\ora\bfx_{\hskip-2pt 2,j};a)
 \frac{x_r^{n_r+a}}{(n_r+a)^{k_r}} \ze^{\star}_{n_r}(\ola\bfk_{\hskip-2pt j+1,r-1}; \ola\bfx_{\hskip-2pt j+1,r-1};a)\\
+&
 \sum_{j=1}^{r-1} (-1)^j \sum_{n\ge n_1\ge n_r>0}
\frac{x_1^{n_1+a}}{(n_1+a)^{k_1}} \ze_{n_1-1}(\ora\bfk_{\hskip-2pt 2,j};\ora\bfx_{\hskip-2pt 2,j};a)
 \frac{x_r^{n_r+a}}{(n_r+a)^{k_r}} \ze^{\star}_{n_r}(\ola\bfk_{\hskip-2pt j+1,r-1}; \ola\bfx_{\hskip-2pt j+1,r-1};a)\\
=&\sum_{j=0}^{r-1} (-1)^j \sum_{n_r=1}^{n} \frac{x_r^{n_r+a}}{(n_r+a)^{k_r}} \ze_{n_r-1}(\ora\bfk_{\hskip-2pt j};\ora\bfx_{\hskip-2pt j};a)\ze^{\star}_{n_r}(\ola\bfk_{\hskip-2pt j+1,r-1}; \ola\bfx_{\hskip-2pt j+1,r-1};a)\\
+&\sum_{j=1}^r (-1)^j \sum_{n_r=1}^{n} \frac{x_1^{n_1+a}}{(n_1+a)^{k_1}} \ze_{n_1-1}(\ora\bfk_{\hskip-2pt 2,j};\ora\bfx_{\hskip-2pt 2,j};a)\ze^{\star}_{n_1}(\ola\bfk_{\hskip-2pt j+1}; \ola\bfx_{\hskip-2pt j+1};a).
\end{align*}
Now
\begin{align*}
\ze_{n_r-1}(\ora\bfk_{\hskip-2pt j};\ora\bfx_{\hskip-2pt j};a)=
\left\{
  \begin{array}{ll}
     \ze_{n_r}(\ora\bfk_{\hskip-2pt j};\ora\bfx_{\hskip-2pt j};a)-\frac{x_1^{n_r+a}}{(n_r+a)^{k_1}} \ze_{n_r}(\ora\bfk_{\hskip-2pt 2,j};\ora\bfx_{\hskip-2pt 2,j};a), \quad \ & \hbox{if $j>0$;} \\
   \ze_{n_r}(\ora\bfk_{\hskip-2pt j};\ora\bfx_{\hskip-2pt j};a)(=1) , & \hbox{if $j=0$;}
  \end{array}
\right.
\end{align*}
and similarly,
\begin{align*}
\ze_{n_1-1}(\ora\bfk_{\hskip-2pt 2,j};\ora\bfx_{\hskip-2pt 2,j};a)=
\left\{
  \begin{array}{ll}
     \ze_{n_1}(\ora\bfk_{\hskip-2pt 2,j};\ora\bfx_{\hskip-2pt 2,j};a)-\frac{x_2^{n_1+a}}{(n_1+a)^{k_2}} \ze_{n_1}(\ora\bfk_{\hskip-2pt 3,j};\ora\bfx_{\hskip-2pt 3,j};a), \quad \ & \hbox{if $j>1$;} \\
    \ze_{n_1}(\ora\bfk_{\hskip-2pt 2,j};\ora\bfx_{\hskip-2pt 2,j};a)(=1) , & \hbox{if $j=1$.}
  \end{array}
\right.
\end{align*}
We see that
\begin{align*}
&\sum_{j=0}^r (-1)^j \ze_n(\ora\bfk_{\hskip-2pt j};\ora\bfx_{\hskip-2pt j};a)\ze^{\star}_n(\ola\bfk_{\hskip-2pt j+1}; \ola\bfx_{\hskip-2pt j+1};a)\\
=&\sum_{n_r=1}^{n} \frac{x_r^{n_r+a}}{n_r^{k_r}}  \sum_{j=0}^{r-1} (-1)^j \ze_{n_r}(\ora\bfk_{\hskip-2pt j};\ora\bfx_{\hskip-2pt j};a)\ze^{\star}_{n_r}(\ola\bfk_{\hskip-2pt j+1,r-1}; \ola\bfx_{\hskip-2pt j+1,r-1};a)\\
-&\sum_{n_r=1}^{n} \frac{(x_1x_r)^{n_r+a}}{(n_r+a)^{k_1+k_r}}  \sum_{j=1}^{r-1} (-1)^j \ze_{n_r}(\ora\bfk_{\hskip-2pt  2,j};\ora\bfx_{\hskip-2pt 2,j};a)\ze^{\star}_{n_r}(\ola\bfk_{\hskip-2pt j+1,r-1}; \ola\bfx_{\hskip-2pt j+1,r-1};a)\\
+&\sum_{n_r=1}^{n} \frac{x_1^{n_1+a}}{n_1^{k_1}} \sum_{j=1}^r (-1)^j \ze_{n_1-1}(\ora\bfk_{\hskip-2pt 2,j};\ora\bfx_{\hskip-2pt 2,j};a)\ze^{\star}_{n_1}(\ola\bfk_{\hskip-2pt j+1}; \ola\bfx_{\hskip-2pt j+1};a)\\
-&\sum_{n_r=1}^{n} \frac{(x_1x_2)^{n_1+a}}{(n_1+a)^{k_1+k_2}} \sum_{j=2}^r (-1)^j \ze_{n_1}(\ora\bfk_{\hskip-2pt 3,j};\ora\bfx_{\hskip-2pt 3,j};a)\ze^{\star}_{n_1}(\ola\bfk_{\hskip-2pt j+1}; \ola\bfx_{\hskip-2pt j+1};a) = 0
\end{align*}
by induction. Here we notice that the two negative sums are zero because $r\ge 3$.
\end{proof}

Letting $n\rightarrow \infty$ yields the following corollary involving the usual MPLs.
\begin{cor}
Let $a$ be a complex parameter, $\bfk=(k_1,\ldots,k_r)\in\N^r$, and $\bfx=(x_1,\dotsc,x_r)\in\gD^r$.
If $(k_1,x_1)\neq (1,1)$ and $(k_r,x_r)\neq (1,1)$ then we have
\begin{align}\label{eq-PMPLs3}
\sum_{j=0}^r (-1)^j \Li_{\ora\bfk_{\hskip-2pt j}}(\ora\bfx_{\hskip-2pt j};a)\Li^\star_{\ola\bfk_{\hskip-2pt j+1}}(\ola\bfx_{\hskip-2pt j+1};a)=0,
\end{align}
where $\Li_{\emptyset}(\emptyset;a)=\Li^\star_{\emptyset}(\emptyset;a):=1$.
\end{cor}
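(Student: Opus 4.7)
The plan is simply to pass to the limit $n\to\infty$ in the finite-$n$ identity \eqref{eq-v2016} of Theorem \ref{thm-pmhns}, which is valid for every $n\in\N$. It suffices to verify that each of the $r+1$ summands on the left-hand side of \eqref{eq-v2016} has a well-defined limit as $n\to\infty$ under the stated admissibility hypotheses.

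For the first factor $\ze_n(\ora\bfk_{\hskip-2pt j};\ora\bfx_{\hskip-2pt j};a)$: when $j=0$ this is identically $1$, and for $1\le j\le r$ the outermost summation variable carries the weight $x_1^{n_1+a}/(n_1+a)^{k_1}$, whose leading data $(k_1,x_1)\neq(1,1)$ by assumption. Since every $x_i\in\gD$, the standard MPL convergence criterion on the closed unit polydisc (Abel-type summation on the leading index, absolute convergence on the inner indices) yields
\begin{equation*}
\lim_{n\to\infty}\ze_n(\ora\bfk_{\hskip-2pt j};\ora\bfx_{\hskip-2pt j};a)=\Li_{\ora\bfk_{\hskip-2pt j}}(\ora\bfx_{\hskip-2pt j};a).
\end{equation*}
Analogously, for the second factor $\ze^\star_n(\ola\bfk_{\hskip-2pt j+1};\ola\bfx_{\hskip-2pt j+1};a)$: when $j=r$ it equals $1$, and for $0\le j\le r-1$ the leading index of the reversed composition is $(k_r,x_r)$, which satisfies $(k_r,x_r)\neq(1,1)$ by hypothesis; hence
\begin{equation*}
\lim_{n\to\infty}\ze^\star_n(\ola\bfk_{\hskip-2pt j+1};\ola\bfx_{\hskip-2pt j+1};a)=\Li^\star_{\ola\bfk_{\hskip-2pt j+1}}(\ola\bfx_{\hskip-2pt j+1};a).
\end{equation*}

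Inserting these limits termwise into \eqref{eq-v2016} produces \eqref{eq-PMPLs3}. The only substantive issue is the convergence on the boundary $|x_i|=1$ for the two ``outer'' factors; once the leading index of each factor is admissible, the convergence is standard and requires no separate argument, so no real obstacle arises.
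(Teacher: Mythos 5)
Your proposal is correct and takes essentially the same route as the paper: the corollary is obtained precisely by letting $n\to\infty$ in the finite identity \eqref{eq-v2016} of Theorem \ref{thm-pmhns}, with the hypotheses $(k_1,x_1)\neq(1,1)$ and $(k_r,x_r)\neq(1,1)$ ensuring that each of the finitely many terms converges to the corresponding product $\Li_{\ora\bfk_{\hskip-2pt j}}(\ora\bfx_{\hskip-2pt j};a)\Li^\star_{\ola\bfk_{\hskip-2pt j+1}}(\ola\bfx_{\hskip-2pt j+1};a)$. (Only a cosmetic caveat: the inner indices need not converge absolutely --- their partial sums may grow polylogarithmically --- but this growth is absorbed by the admissible leading index, exactly as in the standard criterion you invoke.)
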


Obviously, setting $a=0$ in \eqref{eq-PMPLs2} and \eqref{eq-PMPLs3} gives the Sakugawa-Seki \cite[Thms. 2.10 and 2.13]{SS2016}. Further, using Theorem \ref{thm-pmhns}, we can get the following two theorems.
\begin{thm}\label{thm-nPMPLs1}
Let $a$ be a complex parameter, $\bfk=(k_1,\ldots,k_r)\in\N^r$, and $\bfx=(x_1,\dotsc,x_r)\in\gD^r$. Then
for any $n\in\N$ and $l\in\N_0$, we have
\begin{align}\label{eq-nPMPLs1}
&\sum_{n\geq n_1>\cdots>n_r>0} \prod_{j=1}^r \frac{x_{j}^{n_{j}+l+a}}{(n_{j}+l+a)^{k_{j}}}
=(-1)^r \sum_{j=0}^r (-1)^j \ze_{n+l}(\ora\bfk_{\hskip-2pt j};\ora\bfx_{\hskip-2pt j};a)\ze^\star_l(\ola\bfk_{\hskip-2pt j+1}; \ola\bfx_{\hskip-2pt j+1};a).
\end{align}
\end{thm}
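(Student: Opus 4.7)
My plan is to mimic the proof of Theorem \ref{thm-PMPLs1} essentially verbatim, with Lemma \ref{lem-ss2016} replaced by its parametric analogue Theorem \ref{thm-pmhns}. The argument proceeds by induction on the depth $r$. For the base case $r=1$, both sides collapse to single sums: the left-hand side is $\sum_{n_1=1}^{n} x_1^{n_1+l+a}/(n_1+l+a)^{k_1}$, while the right-hand side is $\ze_{n+l}(k_1;x_1;a)-\ze_l^\star(k_1;x_1;a)$. Since $\ze_n=\ze_n^\star$ in depth one, the difference telescopes via the reindexing $m=n_1+l$ to give exactly the left-hand side.

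For the inductive step, I would peel off the outermost variable and apply the induction hypothesis to the inner sum with $n$ replaced by $n_1-1$ and $\bfk,\bfx$ replaced by $\ora\bfk_{2,r},\ora\bfx_{2,r}$, giving
\begin{align*}
\text{LHS}=(-1)^{r-1}\sum_{n_1=1}^{n}\frac{x_1^{n_1+l+a}}{(n_1+l+a)^{k_1}}\sum_{j=0}^{r-1}(-1)^j\ze_{n_1+l-1}(\ora\bfk_{2,j+1};\ora\bfx_{2,j+1};a)\ze_l^\star(\ola\bfk_{j+2,r};\ola\bfx_{j+2,r};a).
\end{align*}
Shifting the summation index $j\mapsto j-1$ and swapping the order of summation, the remaining inner sum over $n_1$ (after the substitution $m=n_1+l$) equals $\ze_{n+l}(\ora\bfk_j;\ora\bfx_j;a)-\ze_l(\ora\bfk_j;\ora\bfx_j;a)$ by the very definition of the parametric multiple harmonic sum.

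This splits the expression into two pieces. The first piece, involving $\ze_{n+l}$, already has the desired shape except that it ranges only over $j=1,\ldots,r$; I would restore the missing $j=0$ term (which is $\ze_l^\star(\ola\bfk;\ola\bfx;a)$) by adding and subtracting it. The second piece, involving $\ze_l$, is exactly (up to the missing $j=0$ term) the left-hand side of Theorem \ref{thm-pmhns} at depth $r$ evaluated at $n=l$, and thus equals $-\ze_l^\star(\ola\bfk;\ola\bfx;a)$. The two stray boundary terms then cancel, producing the claimed identity.

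The only delicate point is bookkeeping the index ranges correctly after the reindexing $j\mapsto j-1$ and verifying that the vanishing identity of Theorem \ref{thm-pmhns} is what converts the spurious $\ze_l$-sum into the missing $j=0$ term with the correct sign. Since Theorem \ref{thm-pmhns} is already established in the paper, no additional ingredient is needed; this is essentially a transcription of the argument for Theorem \ref{thm-PMPLs1} with the parameter $a$ carried along throughout.
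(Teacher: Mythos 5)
Your proposal is correct and is exactly the route the paper intends: it transcribes the induction used for Theorem \ref{thm-PMPLs1}, with the telescoping identity $\sum_{n_1}x_1^{n_1+l+a}(n_1+l+a)^{-k_1}\ze_{n_1+l-1}(\cdot;a)=\ze_{n+l}(\cdot;a)-\ze_l(\cdot;a)$ and with Lemma \ref{lem-ss2016} replaced by its parametric analogue, Theorem \ref{thm-pmhns}, applied at $n=l$ to convert the $\ze_l$-piece into the missing $j=0$ term. The sign and boundary-term bookkeeping you describe checks out, so no further ingredient is needed.
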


\begin{thm}\label{thm-nPMPLs2}
Let $a$ be a complex parameter, $\bfk=(k_1,\ldots,k_r)\in\N^r$, and $\bfx=(x_1,\dotsc,x_r)\in\gD^r$. Then
for any $n\in\N$ and $l\in\N_0$, we have
\begin{align}\label{eq-nPMPLs2}
&\sum_{n\geq n_1\geq \cdots\geq n_r>0} \prod_{j=1}^r \frac{x_{j}^{n_{j}+l+a}}{(n_{j}+l+a)^{k_{j}}}
=(-1)^r \sum_{j=0}^r (-1)^j \ze^\star_{n+l}(\ora\bfk_{\hskip-2pt j};\ora\bfx_{\hskip-2pt j};a) \ze_l(\ola\bfk_{\hskip-2pt j+1}; \ola\bfx_{\hskip-2pt j+1};a).
\end{align}
\end{thm}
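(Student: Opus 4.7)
My plan is to induct on the depth $r$, mimicking the structure of the proof of Theorem \ref{thm-PMPLs1} but using Theorem \ref{thm-pmhns} in place of Lemma \ref{lem-ss2016} at the decisive step.

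For the base case $r=1$, the LHS is the single sum $\sum_{n_1=1}^n x_1^{n_1+l+a}/(n_1+l+a)^{k_1}$ which, via the substitution $m=n_1+l$, equals $\ze^\star_{n+l}(k_1;x_1;a)-\ze^\star_l(k_1;x_1;a)$; since $\ze^\star_l=\ze_l$ at depth one, this matches the RHS.

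For the inductive step with $r\ge 2$, I would separate the outermost summation index $n_1$ on the LHS. Because the constraint is $n_1\ge n_2\ge\cdots$, the inner sum has depth $r-1$ with upper bound $n_1$ (not $n_1-1$), so the inductive hypothesis applied to the composition $\ora\bfk_{\hskip-2pt 2,r}$ and variables $\ora\bfx_{\hskip-2pt 2,r}$ gives
\begin{align*}
\text{LHS} = (-1)^{r-1}\sum_{n_1=1}^n \frac{x_1^{n_1+l+a}}{(n_1+l+a)^{k_1}}\sum_{j=0}^{r-1}(-1)^j \ze^\star_{n_1+l}\!\left(\ora\bfk_{\hskip-2pt 2,j+1};\ora\bfx_{\hskip-2pt 2,j+1};a\right)\ze_l\!\left(\ola\bfk_{\hskip-2pt j+2};\ola\bfx_{\hskip-2pt j+2};a\right).
\end{align*}
Interchanging the $n_1$- and $j$-summations, the inner $n_1$-sum is evaluated by setting $m_1=n_1+l$ and absorbing $x_1^{m_1+a}/(m_1+a)^{k_1}$ as the top factor of a star sum of length $j+1$, yielding the telescoping
\begin{align*}
\sum_{n_1=1}^n \frac{x_1^{n_1+l+a}}{(n_1+l+a)^{k_1}}\,\ze^\star_{n_1+l}\!\left(\ora\bfk_{\hskip-2pt 2,j+1};\ora\bfx_{\hskip-2pt 2,j+1};a\right) = \ze^\star_{n+l}\!\left(\ora\bfk_{\hskip-2pt j+1};\ora\bfx_{\hskip-2pt j+1};a\right) - \ze^\star_l\!\left(\ora\bfk_{\hskip-2pt j+1};\ora\bfx_{\hskip-2pt j+1};a\right).
\end{align*}

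Reindexing $J=j+1$ and reinstating the $J=0$ boundary term in each piece, the LHS becomes the desired RHS minus the residual
\begin{align*}
(-1)^r\sum_{J=0}^r (-1)^J \ze^\star_l\!\left(\ora\bfk_{\hskip-2pt J};\ora\bfx_{\hskip-2pt J};a\right)\ze_l\!\left(\ola\bfk_{\hskip-2pt J+1};\ola\bfx_{\hskip-2pt J+1};a\right).
\end{align*}
Theorem \ref{thm-pmhns} has $\ze$ on the left and $\ze^\star$ on the right, which is the opposite placement; however, the required dual follows by applying Theorem \ref{thm-pmhns} to the reversed pair $(\ola\bfk,\ola\bfx)$ and substituting $j\mapsto r-j$ in the resulting identity. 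Evaluating this dual at $n=l$ kills the residual and closes the induction.

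The main obstacle I anticipate is the careful bookkeeping of $\ora\bfk_{\hskip-2pt j}$ versus $\ola\bfk_{\hskip-2pt j+1}$ under the reindexing $J=j+1$, together with the derivation of the $\ze^\star\leftrightarrow\ze$ dual of Theorem \ref{thm-pmhns} via composition reversal; once these are verified, the argument is a direct parametric transposition of the depth-$r$ telescoping in the proof of Theorem \ref{thm-PMPLs1}.
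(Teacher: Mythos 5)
Your proposal is correct and is exactly the argument the paper intends: it omits the proof of Theorem \ref{thm-nPMPLs2} as ``completely similar'' to that of Theorem \ref{thm-PMPLs1}, i.e.\ induction on the depth $r$, splitting off the outermost index, telescoping the resulting $n_1$-sum, and invoking the parametric alternating-sum identity of Theorem \ref{thm-pmhns} (here in its reversed form, obtained as you note by applying it to $(\ola\bfk,\ola\bfx)$ and reindexing) evaluated at $n=l$ to kill the residual.
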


The proofs of Theorems \ref{thm-nPMPLs1} and \ref{thm-nPMPLs2} are completely similar to the proof of Theorem \ref{thm-PMPLs1} and are thus omitted. We leave the detail to the
interested reader. It is clear that setting $a=0$ in \eqref{eq-nPMPLs1} and \eqref{eq-nPMPLs2} yields \eqref{eq-PMPLs1} and \eqref{eq-PMPLs2}, respectively.

\section{Multiple Integrals Associated with Multiple-Labeled Posets}\label{sec:mulInt}
Yamamoto first used a graphical representation to study the MZVs and MZSVs in \cite{Yamamoto2014}. In this last section, we will apply this idea to study the convoluted MZVs and their $M$-variant.

\subsection{Multiple Integrals of MPL Associated with Multiple-Labeled Posets}
\begin{defn}
A \emph{$(r+1)$-poset} is a pair $(X,\delta_X)$, where $X=(X,\leq)$ is
a finite partially ordered set and $\delta_X$ is a map from $X$ to $\{0,\varepsilon_1,\varepsilon_2,\ldots,\varepsilon_r\}$, where
$0<|\varepsilon_{j}|\leq 1$ for all $j=1,\dots,r$.
The $\delta_X$ is called the \emph{label map} of $X$.
We often omit  $\delta_X$ and simply say ``a $(r+1)$-poset $X$''.

Similar to $2$-poset defined in \cite{Yamamoto2014}, a $(r+1)$-poset $(X,\delta_X)$ is called \emph{admissible}
if $\delta_X(x)=\varepsilon \ne 1$ for all minimal
elements $x \in X$ and $\delta_X(x) \ne 0$ for all maximal elements $x \in X$.
\end{defn}

\begin{defn}
For an admissible $(r+1)$-poset $X$, we define the associated integral
\begin{equation}
J(X)=\int_{\Delta_X}\prod_{x\in X}\Omega_{\delta_X(x)}(t_x), \quad
\Delta_X=\bigl\{(t_x)_x\in [0,1]^X \bigm| t_x<t_y \text{ if } x>y\bigr\}
\end{equation}
where
\begin{equation*}
\Omega_0(t)=\frac{dt}{t},\qquad \Omega_{\varepsilon_{j}}(t)=\frac{\eps_{j} dt}{1-\varepsilon_{j} t} \ \text{ for all } j=1,\dots,r.
\end{equation*}
For the empty $(r+1)$-poset, denoted by $\emptyset$, we put $J(\emptyset):=1$.
\end{defn}

Similarly, we also use Hasse diagrams to indicate $(r+1)$-posets, with vertices $\circ$ and ``$\mathop{\bullet}\limits_{\varepsilon}$" corresponding to $\delta(x)=0$ and $\delta(x)=\varepsilon$, respectively. In particular, let
``$\mathop{\circ}\limits_{}^n$" correspond to the 1-form ``$t^{n-1}dt$". The following result is straight-forward.

\begin{pro}\label{prop:shufflrposet}
For non-comparable elements $a$ and $b$ of a $3$-poset $X$, $X^b_a$ denotes the $(r+1)$-poset that is obtained from $X$ by adjoining the relation $a<b$. If $X$ is an admissible $(r+1)$-poset, then the $(r+1)$-poset $X^b_a$ and $X^a_b$ are admissible and
\begin{equation}
J(X)=J(X^b_a)+J(X^a_b).
\end{equation}
\end{pro}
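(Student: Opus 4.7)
The strategy is the direct extension of Yamamoto's original argument for $2$-posets. Since $a$ and $b$ are incomparable in $X$, the domain $\Delta_X$ imposes no constraint between $t_a$ and $t_b$, so up to the measure-zero subset $\{t_a=t_b\}$ one has the partition
\[
\Delta_X = \bigl\{(t_x)\in \Delta_X \bigm| t_b < t_a\bigr\} \sqcup \bigl\{(t_x)\in \Delta_X \bigm| t_a < t_b\bigr\}.
\]
Under the convention $t_x < t_y \Leftrightarrow x > y$, the constraint $t_b < t_a$ is precisely the integration-domain avatar of the added relation $a<b$, so the first piece coincides with $\Delta_{X^b_a}$, and symmetrically the second piece coincides with $\Delta_{X^a_b}$. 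Since the integrand $\prod_{x\in X}\Omega_{\delta_X(x)}(t_x)$ depends only on the labels (which are unchanged when relations are adjoined) and not on the order structure, integrating this common integrand over each piece and summing would yield $J(X) = J(X^b_a) + J(X^a_b)$.

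Before executing the splitting I would verify that $X^b_a$ and $X^a_b$ are themselves admissible, so that both integrals on the right-hand side are defined. Adjoining a relation can only enlarge the partial order, so the set of minimal elements of $X^b_a$ is a subset of the minimal elements of $X$, and similarly for maximal elements. Because $\delta_X$ is unchanged, the admissibility conditions $\delta_X(x) \ne 1$ at minimal elements and $\delta_X(x) \ne 0$ at maximal elements continue to hold; hence $X^b_a$, and symmetrically $X^a_b$, is admissible. The same observation, combined with the fact that $\Omega_{\varepsilon_j}(t)=\varepsilon_j\,dt/(1-\varepsilon_j t)$ is continuous on $[0,1]$ whenever $\varepsilon_j\neq 1$, guarantees absolute convergence of $J(X^b_a)$ and $J(X^a_b)$ and thus legitimizes the pointwise decomposition of the integral by Fubini/Tonelli.

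The argument contains no genuine obstacle; it is essentially a routine relabeling of the proof in \cite{Yamamoto2014}. The only additional bookkeeping compared with the $2$-poset case arises from the larger alphabet $\{0,\varepsilon_1,\ldots,\varepsilon_r\}$ of labels, and this presents no extra difficulty because the assumption $0<|\varepsilon_j|\le 1$ ensures that $\Omega_{\varepsilon_j}$ can only develop an endpoint singularity on $[0,1]$ when $\varepsilon_j=1$, a case that is already excluded at the minimal elements by admissibility. Consequently, the splitting identity follows immediately from the partition of $\Delta_X$ and the invariance of the integrand, and one obtains the claimed equality $J(X)=J(X^b_a)+J(X^a_b)$.
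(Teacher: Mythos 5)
Your proof is correct and is exactly the standard argument the paper has in mind (it states the proposition as ``straight-forward'' without proof, following Yamamoto's $2$-poset case): decompose $\Delta_X$ along the sign of $t_a-t_b$, note the integrand is unchanged, and observe that adjoining a relation only shrinks the sets of minimal and maximal elements so admissibility is preserved. Nothing further is needed.
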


For $\bfk=(k_1,\dotsc,k_s)\in\N^s$ and $\bfet=(\eps_1,\ldots,\eps_s)\in[-1,1]^s$ (admissible or not),
we write
$$
\begin{xy}
{(0,-2) \ar @{{*}.o} (5,3)},
{(0,-2) \ar @/^0.6mm/ @{-}^{(\bfk,\bfeps)} (5,3)}
\end{xy}=
\begin{xy}
{(-2,-14) \ar @{}_{\eps_s} (-2,-14)},
{(0,-12) \ar @{{*}-o} (4,-10)},
{(4,-10) \ar @{.o} (10,-7)},
{(10,-7) \ar @{-} (14,-5)},
{(14,-5) \ar @{.} (20,-2)},
{(20,-2) \ar @{-{*}} (24,0)},
{(24,0) \ar @{}_{\eps_1} (24,0)},
{(24,0) \ar @{-o}(28,2)},
{(28,2) \ar @{.o} (34,5)},
{(0,-11.5) \ar @/^2mm/ @{-}^{k_s} (9,-7)},
{(24,1) \ar @/^2mm/ @{-}^{k_{1}} (33,5)},
\end{xy} \quad
\text{and} \quad
{\scriptstyle \odot}\hskip-4pt\rightsquigarrow\hskip-4pt(\bfk,\bfeps)
=\begin{xy}
{(15,-3) \ar @{}_{\odot} (15,-3)},
{(15,-5) \ar @{}_{\varepsilon_1} (15,-5)},
{(18,-3) \ar @{{}-} (21,0)},
{(21,0) \ar @{{o}.} (24,3)},
{(24,3) \ar @{{o}-} (27,-3)},
{(27,-3) \ar @{{*}-} (30,0)},
{(25,-3) \ar @{}_{\varepsilon_2} (25,-3)},
{(30,0) \ar @{{o}.} (33,3)},
{(33,3) \ar @{{o}-} (35,-2)},
{(37,0) \ar @{.} (41,0)},
{(42,-3) \ar @{{*}-} (45,0)},
{(40,-3) \ar @{}_{\varepsilon_{s}} (40,-3)},
{(45,0) \ar @{{o}.{o}} (48,3)},
{(19,-3) \ar @/_0.6mm/ @{-}_{\hskip-2pt k_1} (24,2)},
{(28,-3) \ar @/_0.6mm/ @{-}_{\hskip-2pt k_{2}} (33,2)},
{(43,-3) \ar @/_0.6mm/ @{-}_{\hskip-2pt k_s} (48,2)},
\end{xy}
$$
where ${\scriptstyle \odot}=\circ$ or $\bullet$.
If $k_i=1$, we understand $\begin{xy}
{(0,-3) \ar @{}_{\eta_i} (0,-3)},
{(0,-2) \ar @{{*}-o} (4,0)},
{(4,0) \ar @{.o} (10,3)},
{(0,-2) \ar @/^2mm/ @{-}^{k_i} (9,3)}
\end{xy}$ as a single ``$\mathop{\bullet}\limits_{\eta_i}$".
We see from \eqref{equ:glInteratedInt} that
\begin{align}
J\Big(\raisebox{-1pt}{\begin{xy}
{(0,-2) \ar @{{*}.o} (4,3)},
{(0,-2) \ar @/^0.6mm/ @{-}^{(\bfk,\bfet)} (4,3)}
\end{xy}}\, \Big)=\Li_{\bfk}\Big(\eta_1,\frac{\eta_2}{\eta_1},\dotsc,\frac{\eta_r}{\eta_{r-1}}\Big)=\Li_{\bfk}\big(\bfw(\bfet)\big).
\end{align}
Here and in what follows we will need the following maps ($\eta_1,\ldots,\eta_r\neq 0$):
\begin{align*}
&\bfu(\eta_1,\eta_2,\ldots,\eta_r):=\Big(\frac{\eta_1}{\eta_2},\ldots,\frac{\eta_{r-1}}{\eta_r},\eta_r\Big),\quad
\bfv(\eta_1,\eta_2,\ldots,\eta_r):=\Big(\frac{\eta_1}{\eta_2},\ldots,\frac{\eta_{r-1}}{\eta_r}\Big),\\
&\bfw(\eta_1,\eta_2,\ldots,\eta_r):=\Big(\eta_1,\frac{\eta_2}{\eta_1},\ldots,\frac{\eta_{r}}{\eta_{r-1}}\Big),\quad
\bfy(\eta_1,\eta_2,\ldots,\eta_r):=\Big(\frac{\eta_2}{\eta_1},\ldots,\frac{\eta_{r}}{\eta_{r-1}}\Big).
\end{align*}

\begin{pro}
Let $n_0:=n\in \N$, $\bfl=(l_1,\ldots,l_s)\in\N^s$ and $\bfeps:=(\varepsilon_1,\dotsc,\varepsilon_s)\in[-1,1]^s$. Then
\begin{multline}\label{F2-MPLs}
J\bigg( \raisebox{4pt}{\begin{xy}
{(13,3) \ar @{}_{n}  (13,3)},
{(15,3) \ar @{} (15,2.5)},
{(15,1.5) \ar @{{o}-} (18,-3)},
{(18,-3) \ar @{{*}-} (18,-3)},
{(28,-1.2) \ar @{}_{\rightsquigarrow\hskip-1pt(\bfl,\bfeps)} (28,-1.2)},
\end{xy}}
\bigg)
=(1-\eps_1^{-n})J\Big({\circ}\hskip-4pt\rightsquigarrow\hskip-4pt(\bfl,\bfeps)\Big)
+\frac{\ze^\star_n(\bfl;\bfu(\bfeps))}{\eps_1^n} \\
 +\sum_{j=2}^s \frac{\ze^\star_n\left(\ora\bfl_{\hskip-3pt j-1};\bfu(\ora\bfeps_{\hskip-3pt j-1})\right)-\ze^\star_n\left(\ora\bfl_{\hskip-3pt j-1};\bfv(\ora\bfeps_{\hskip-3pt j})\right)}{\eps_1^n}\times
J\Big(\circ\hskip-4pt\rightsquigarrow\hskip-4pt(\ora\bfl_{\hskip-3pt j,s},\ora\bfeps_{\hskip-3pt j,s})\Big).
\end{multline}
\end{pro}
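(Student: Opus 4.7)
The plan is to exploit the branching structure of the poset $X$ on the left-hand side of \eqref{F2-MPLs}: it has $\bullet_{\eps_1}$ as its unique minimum, above which sit two incomparable pieces, namely the single vertex $\circ_n$ (associated to the form $t^{n-1}\,dt$) and the upper portion of the chain $\rightsquigarrow(\bfl,\bfeps)$. My approach is to linearize $X$ via the shuffle relation of Proposition~\ref{prop:shufflrposet} and then evaluate each resulting linear chain by applying Theorem~\ref{thm-ItI2}.

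First I would iterate Proposition~\ref{prop:shufflrposet} to write
$J(X)=\sum_{Y} J(Y)$,
where $Y$ ranges over the $|\bfl|$ totally ordered chains obtained by inserting $\circ_n$ into each available gap along the upper chain $\rightsquigarrow(\bfl,\bfeps)$. I would then group these $|\bfl|$ linear chains by the ``block'' in which $\circ_n$ lands: for $j=1,\ldots,s+1$, block $j$ is the region of the upper chain strictly above $\bullet_{\eps_{j-1}}$ and strictly below $\bullet_{\eps_j}$ (using the conventions $\bullet_{\eps_0}:=\bullet_{\eps_1}$ itself and $\bullet_{\eps_{s+1}}:=$ the top vertex). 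Block $j$ contains exactly $l_j$ insertion positions.

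Next, within each block I would apply the key identity \eqref{FII2} to the portion of the chain lying below $\circ_n$, summing the $l_j$ position contributions telescopically. The outcome is that the chain below $\circ_n$ produces the coefficient
$\bigl[\ze^\star_n(\ora\bfl_{j-1};\bfu(\ora\bfeps_{j-1}))-\ze^\star_n(\ora\bfl_{j-1};\bfv(\ora\bfeps_j))\bigr]/\eps_1^n$,
while the chain above $\circ_n$ contributes the factor $J(\circ\rightsquigarrow(\ora\bfl_{j,s},\ora\bfeps_{j,s}))$. The two extremal blocks are degenerate: the bottom block $j=1$, where there is no bullet below $\circ_n$, has its two $\ze^\star_n$-contributions collapse to $(1-\eps_1^{-n})$, multiplying the full $J(\circ\rightsquigarrow(\bfl,\bfeps))$; the top block $j=s+1$, where there is no chain above $\circ_n$, has $J(\circ\rightsquigarrow\cdot)$ degenerate to $1$, leaving the pure MHSS term $\ze^\star_n(\bfl;\bfu(\bfeps))/\eps_1^n$. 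Summing the $s+1$ block contributions reassembles the right-hand side of \eqref{F2-MPLs}.

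The main technical obstacle is the careful index bookkeeping through the maps $\bfu,\bfv,\bfw$: each block demands a correctly truncated and reindexed application of \eqref{FII2} to $(\ora\bfl_{1,j-1},\ora\bfeps_{1,j-1})$, and one must verify that the shared boundary contributions between adjacent blocks telescope into precisely the difference $\ze^\star_n(\ora\bfl_{j-1};\bfu(\ora\bfeps_{j-1}))-\ze^\star_n(\ora\bfl_{j-1};\bfv(\ora\bfeps_j))$ in the generic term, with the correct $\eps_1^{-n}$ scalings throughout. In effect this amplifies the proof of Theorem~\ref{thm-ItI2} from a single linear chain to the present poset situation with one additional incomparable vertex $\circ_n$.
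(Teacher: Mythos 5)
Your plan rests on a misreading of the poset on the left-hand side of \eqref{F2-MPLs}. The piece $\rightsquigarrow\hskip-1pt(\bfl,\bfeps)$ is not a totally ordered chain sitting above $\bullet_{\eps_1}$: it is the zigzag (fence) diagram, with every $\bullet_{\eps_j}$ at a valley and the circles at the peaks, which is exactly what produces the star sums $\ze^\star_n$ in the formula. In particular, for $s\ge 2$ the bullets $\bullet_{\eps_2},\dots,\bullet_{\eps_s}$ are \emph{not} above $\bullet_{\eps_1}$, so the poset does not have a unique minimum and ``the upper portion of the chain'' is not a chain. Because of this, the decomposition you propose via Proposition \ref{prop:shufflrposet} is wrong both in count and in structure: making $\circ_n$ comparable to every other vertex yields one term for each down-set of the fence containing $\bullet_{\eps_1}$ (already $5$ terms for $\bfl=(2,2)$, not $|\bfl|=4$), and each resulting poset still contains the fence, so none of them is a totally ordered chain. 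Consequently Theorem \ref{thm-ItI2}, i.e.\ \eqref{FII2}, which evaluates a genuinely linear iterated integral with $t^{n-1}dt$ innermost, cannot be applied to ``the portion of the chain lying below $\circ_n$'' as you claim. The two remaining steps are then unsupported: the telescoping of block contributions into the differences $\ze^\star_n\big(\ora\bfl_{\hskip-3pt j-1};\bfu(\cdot)\big)-\ze^\star_n\big(\ora\bfl_{\hskip-3pt j-1};\bfv(\cdot)\big)$ is asserted rather than derived, and the term $(1-\eps_1^{-n})J\big({\circ}\hskip-2pt\rightsquigarrow\hskip-2pt(\bfl,\bfeps)\big)$ cannot arise from a subfamily of insertions of $\circ_n$ at all, since in every inserted poset the bottom vertex keeps its label $\eps_1$; the $\eps_1^{-n}$ there is analytic, not combinatorial, in origin.

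For comparison, the paper's proof does no linearization: it computes $J$ of the branched poset as a repeated integral, one tooth at a time. Integrating the $\circ_n$ branch produces a factor $1-t^n$ at the bottom bullet, and the identity $\eps\frac{1-t^n}{1-\eps t}=(1-\eps^{-n})\frac{\eps}{1-\eps t}+\eps^{-n}\sum_{m=1}^{n}\eps^m t^{m-1}$ gives a one-step recurrence expressing the integral as $(1-\eps_1^{-n})J\big({\circ}\hskip-2pt\rightsquigarrow\hskip-2pt(\bfl,\bfeps)\big)$ plus $\eps_1^{-n}\sum_{n_1\le n}\eps_1^{n_1}n_1^{-l_1}$ times the same kind of integral with data $\big(n_1;\ora\bfl_{\hskip-3pt 2,s},\ora\bfeps_{\hskip-2pt 2,s}\big)$; iterating $s$ times gives \eqref{F2-MPLs}. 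If you wanted to push your route through, you would have to decompose over down-sets of the fence rather than positions in a chain, and then prove (not quote) an analogue of \eqref{FII2} for the resulting non-linear pieces--at which point you would essentially be re-deriving the recurrence above, so the proposal as it stands does not constitute a proof.
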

\begin{proof} Observe that
\begin{equation*}
\eps \frac{1-t^n}{1-\eps t}=(1-\eps^{-n})\frac{\eps}{1-\eps t}+\frac{1}{\eps^{n}}\sum_{m=1}^n \eps^m t^{m-1}.
\end{equation*}
Hence, by an elementary calculation, we have the following recurrence relation
\begin{align*}
J\bigg( \raisebox{4pt}{\begin{xy}
{(13,3) \ar @{}_{n}  (13,3)},
{(15,3) \ar @{} (15,2.5)},
{(15,1.5) \ar @{{o}-} (18,-3)},
{(18,-3) \ar @{{*}-} (18,-3)},
{(28,-1.2) \ar @{}_{\rightsquigarrow\hskip-1pt(\bfl,\bfeps)} (28,-1.2)},
\end{xy}}
\bigg)
=(1-\eps_1^{-n})J\Big({\circ}\hskip-4pt\rightsquigarrow\hskip-4pt(\bfl,\bfeps)\Big)+
\frac{1}{\eps_1^n} \sum_{n_1=1}^n \frac{\eps_1^{n_1}}{n_1^{k_1}}
J\bigg( \raisebox{4pt}{\begin{xy}
{(13,3) \ar @{}_{n_1}  (13,3)},
{(15,3) \ar @{} (15,2.5)},
{(15,1.5) \ar @{{o}-} (18,-3)},
{(18,-3) \ar @{{*}-} (18,-3)},
{(36,-0.7) \ar @{}_{\rightsquigarrow\hskip-1pt(\ora\bfl_{\hskip-3pt j,s},\ora\bfeps_{\hskip-3pt j,s})} (36,-0.7)},
\end{xy}}
\bigg).
\end{align*}
Then, applying the above recurrence relation, we deduce the desired evaluation.
\end{proof}

\begin{thm}\label{thm-mpls} For any two compositions of positive integers $\bfk=(k_1,\dotsc,k_r)$, $\bfl=(l_1,\dotsc,l_s)$,  $\bfet:=(\eta_1,\dotsc,\eta_r)\in[-1,1]^r$ and $\bfeps:=(\varepsilon_2,\dotsc,\varepsilon_{s})\in[-1,1]^{s-1}$,
\begin{align}\label{Int-Ser-AMPls}
&J\left( \raisebox{-6pt}{\begin{xy}
{(12,-3) \ar @{{}.} (18,3)},
{(12,-3) \ar @{{*}} (12,-3)},
{(18,3) \ar @{{o}-} (21,6)},
{(21,6) \ar @{{o}.} (24,9)},
{(24,9) \ar @{{o}-{*}} (27,3)},
{(12,-3) \ar @/_0.6mm/ @{}_{(\bfk,\bfet)} (18,3)},
{(18,4) \ar @/^0.6mm/ @{-}^{l_1} (23,9)},
{(26.5,6.5) \ar @{}_{\rightsquigarrow\hskip-1pt(\ora\bfl_{\hskip-3pt 2,s},\ora\bfeps_{\hskip-2pt 2,s})} (26.5,6.5)},
\end{xy}}
\right)
=\left\{\Li_{k_1+l_1,\ora\bfk_{\hskip-2pt 2,r}}\left(\bfw(\bfet)\right)-\Li_{k_1+l_1,\ora\bfk_{\hskip-2pt  2,r}}\left(\bfy(\eps_2,\bfet)\right)\right\}
J\Big(\circ\hskip-4pt\rightsquigarrow\hskip-4pt(\ora\bfl_{\hskip-3pt 2,s},\ora\bfeps_{\hskip-2pt 2,s})\Big)\nonumber\\
&\quad+\sum_{j=2}^{s-1} \left\{\ze\left(\big(\bfk;1,\bfy(\bfet)\big)\circledast \Big(\ora\bfl_{\hskip-3pt j};\bfu(\eta_1,\ora\bfeps_{\hskip-3pt j})\Big)^\star\right)-\ze\left(\big(\bfk;1,\bfy(\bfet)\big)\circledast \Big(\ora\bfl_{\hskip-3pt j};\bfv(\eta_1,\ora\bfeps_{\hskip-3pt j+1})\Big)^\star\right) \right\}\nonumber\\
&\quad\quad\quad\quad\times J\Big(\circ\hskip-4pt\rightsquigarrow\hskip-4pt(\ora\bfl_{\hskip-3pt j+1,s},\ora\bfeps_{\hskip-3pt j+1,s})\Big) +\ze\left(\big(\bfk;1,\bfy(\bfet)\big)\circledast \Big(\bfl;\bfu(\eta_1,\ora\bfeps_{\hskip-2pt s})\Big)^\star\right).
\end{align}
\end{thm}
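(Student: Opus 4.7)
The strategy is to apply the preceding recurrence \eqref{F2-MPLs} to the upper star branch of the $Y$-shaped poset, then weight-and-sum by a series that encodes the lower $(\bfk,\bfet)$-chain together with the intervening $l_1$ zeros. Concretely, I would substitute $(\bfl,\bfeps)\mapsto(\ora\bfl_{\hskip-2pt 2,s},\ora\bfeps_{\hskip-2pt 2,s})$ in \eqref{F2-MPLs}, so that the role of $\eps_1$ there is now played by $\eps_2$, and multiply both sides of the resulting identity by
$$c_n:=\frac{\eta_1^n\,\ze_{n-1}\bigl(\ora\bfk_{\hskip-2pt 2,r};\bfy(\bfet)\bigr)}{n^{k_1+l_1}},$$
then sum for $n\ge 1$.

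On the LHS, the summand $c_n\,J\bigl(\mathop{\circ}\limits^n\rightsquigarrow(\ora\bfl_{\hskip-2pt 2,s},\ora\bfeps_{\hskip-2pt 2,s})\bigr)$ can be written, after interchanging sum and integral, as $\int_0^1\bigl[\sum_n n\,c_n t^{n-1}\bigr]\,dt\,\cdot(\text{star-chain forms})$. The bracket equals $\tfrac{1}{t}\Li_{k_1+l_1,\ora\bfk_{\hskip-2pt 2,r}}(t\eta_1,\bfy(\bfet))$ by the series definition of MPLs, and substituting the integral representation \eqref{MPL-II-1} for this MPL together with a $dt/t$ at the junction point reconstructs exactly the iterated integral $J(X)$ of the $Y$-poset on the LHS of \eqref{Int-Ser-AMPls}; the extra $l_1$ zero forms coming from $\Li_{k_1+l_1,\ldots}$ correspond precisely to the $l_1$ zeros sitting above the $(\bfk,\bfet)$-chain in the poset.

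On the RHS, the three structural pieces of \eqref{F2-MPLs} translate term by term. Using $\sum_n c_n=\Li_{k_1+l_1,\ora\bfk_{\hskip-2pt 2,r}}(\bfw(\bfet))$ and $\sum_n c_n\eps_2^{-n}=\Li_{k_1+l_1,\ora\bfk_{\hskip-2pt 2,r}}(\bfy(\eps_2,\bfet))$, the $(1-\eps_2^{-n})\,J(\circ\rightsquigarrow\cdots)$ term of \eqref{F2-MPLs} yields the first bracketed term of \eqref{Int-Ser-AMPls}. The middle $\ze_n^\star$-difference terms become differences of colored convoluted MZVs by the very definition \eqref{equ:KYMZVs} (extended to colored arguments), and the trailing $\ze_n^\star(\ora\bfl_{\hskip-2pt 2,s};\bfu(\ora\bfeps_{\hskip-2pt 2,s}))/\eps_2^n$ summed with weight $c_n$ produces the final convoluted MZV $\ze\bigl((\bfk;1,\bfy(\bfet))\circledast(\bfl;\bfu(\eta_1,\ora\bfeps_s))^\star\bigr)$. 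The main obstacle is purely bookkeeping: verifying coordinate-change identities such as $\bfu(\eta_1,\ora\bfeps_{\hskip-2pt 2,j})=(\eta_1/\eps_2,\bfu(\ora\bfeps_{\hskip-2pt 2,j}))$ and the analogous ones for $\bfv$ and $\bfy$, so that inserting an extra leading $\eta_1$ on each argument vector (from the $\eta_1^n$ in $c_n$) matches the argument tuples in \eqref{Int-Ser-AMPls}; once these identifications are confirmed, the remainder is a routine exchange of summation and integration.
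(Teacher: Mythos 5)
Your outline is essentially the paper's own proof: the paper likewise writes $J$ of the $Y$-shaped poset as $\sum_{n\ge 1}\eta_1^{n}\,\ze_{n-1}\big(\ora\bfk_{2,r};\bfy(\bfet)\big)\,n^{-k_1-l_1}\,J\big(\mathop{\circ}\limits^{n}\rightsquigarrow(\ora\bfl_{2,s},\ora\bfeps_{2,s})\big)$ via \eqref{equ:glInteratedInt} and then applies \eqref{F2-MPLs} (with $\eps_1$ there playing the role of $\eps_2$) and sums against the same weight $c_n$, which is exactly your computation read in the other direction. The only blemish is the factor-of-$n$ wobble in your bracket ($\sum_n n\,c_n t^{n-1}$ should be $\sum_n c_n t^{n-1}=t^{-1}\Li_{k_1+l_1,\ora\bfk_{2,r}}\big(t\eta_1,\bfy(\bfet)\big)$ if $\mathop{\circ}\limits^{n}$ denotes the $1$-form $t^{n-1}dt$), a harmless normalization ambiguity that is already present in the paper's own statement and use of \eqref{F2-MPLs}.
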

\begin{proof}
The proof of \eqref{thm-mpls} is similar as the proof of Theorem \eqref{thm-mmvs}. Using the iterated integral \eqref{equ:glInteratedInt}, we obtain
\begin{align*}
& J\left( \raisebox{-6pt}{\begin{xy}
{(12,-3) \ar @{{}.} (18,3)},
{(12,-3) \ar @{{*}} (12,-3)},
{(18,3) \ar @{{o}-} (21,6)},
{(21,6) \ar @{{o}.} (24,9)},
{(24,9) \ar @{{o}-{*}} (27,3)},
{(12,-3) \ar @/_0.6mm/ @{}_{(\bfk,\bfet)} (18,3)},
{(18,4) \ar @/^0.6mm/ @{-}^{l_1} (23,9)},
{(26.5,6.5) \ar @{}_{\rightsquigarrow\hskip-1pt(\ora\bfl_{\hskip-3pt 2,s},\ora\bfeps_{\hskip-2pt 2,s})} (26.5,6.5)},
\end{xy}}
\right)
=\su \frac{\ze_{n-1}\Big(\ora\bfk_{\hskip-2pt 2,r};\bfy({\bfet})\Big)}{n^{k_1+l_1}}\eta_1^n
J\bigg( \raisebox{4pt}{\begin{xy}
{(13,3) \ar @{}_{n}  (13,3)},
{(15,3) \ar @{} (15,2.5)},
{(15,1.5) \ar @{{o}-} (18,-3)},
{(18,-3) \ar @{{*}-} (18,-3)},
{(36,-0.7) \ar @{}_{\rightsquigarrow\hskip-1pt(\ora\bfl_{\hskip-3pt 2,s},\ora\bfeps_{\hskip-2pt 2,s})} (36,-0.7)},
\end{xy}}
\bigg).
\end{align*}
Thus, applying \eqref{F2-MPLs} with an elementary calculation, we obtain the formula \eqref{Int-Ser-AMPls}.
\end{proof}

If letting all $\eps_i=\eta_{j}=1$ in \eqref{Int-Ser-AMPls}, then we obtain the ``integral-series" relation of Kaneko--Yamamoto \cite{Yamamoto2014}. If letting all $\eps_i,\eta_{j}\in\{\pm 1\}$, then we also obtain many `integral-series" relations of alternating multiple zeta values. For example, setting $\bfk=(1,1)$ and $\bfl=(1,2)$ with $\eps_2=1$ and $(\eta_1,\eta_2)\in \{\pm 1\}^2$ in \eqref{Int-Ser-AMPls} gives
\begin{align*}
&2{\Li}_{3,1,1}(1,\eta_1,\eta_1\eta_2) +2{\Li}_{3,1,1}(\eta_1,\eta_1,\eta_2)+2{\Li}_{3,1,1}(\eta_1,\eta_1\eta_2,\eta_2)\nonumber\\&\quad+{\Li}_{2,2,1}(\eta_1,\eta_1,\eta_2)+{\Li}_{2,2,1}(\eta_1,\eta_1\eta_2,\eta_2)
+{\Li}_{2,1,2}(\eta_1,\eta_1\eta_2,\eta_2) \\
&=\Li_{2,1,2}(\eta_1,\eta_1\eta_2,1)+\Li_{2,2,1}(\eta_1,1,\eta_1\eta_2)+\Li_{2,3}(\eta_1,\eta_1\eta_2)+\Li_{4,1}(\eta_1,\eta_1\eta_2).
\end{align*}

\subsection{Multiple Integrals of MMVs Associated with $3$-Labeled Posets}

\begin{defn}
For an admissible $3$-poset $X$, we define the associated integral
\begin{equation}
I(X)=\int_{\Delta_X}\prod_{x\in X}\omega_{\delta_X(x)}(t_x), \quad
\Delta_X=\bigl\{(t_x)_x\in [0,1]^X \bigm| t_x<t_y \text{ if } x>y\bigr\}
\end{equation}
where
\begin{equation*}
\omega_{-1}(t)=\frac{2dt}{1-t^2},\quad \omega_0(t)=\frac{dt}{t}, \quad \omega_1(t)=\frac{2tdt}{1-t^2}.
\end{equation*}
For the empty 3-poset, denoted by $\emptyset$, we put $I(\emptyset):=1$.
\end{defn}

It is clear from the definition of MMVs \eqref{defn-mmvs} that
\begin{align}
I\Big(\raisebox{-1pt}{\begin{xy}
{(0,-2) \ar @{{*}.o} (4,3)},
{(0,-2) \ar @/^0.6mm/ @{-}^{(\bfk,\bfet)} (4,3)}
\end{xy}}\, \Big)=M\big(\bfk;\bfp(\bfet)\big).
\end{align}

The following result is obvious and is often used to compute the multiple integrals.
\begin{pro}\label{prop:shuffl3poset}
For non-comparable elements $a$ and $b$ of a $3$-poset $X$, $X^b_a$ denotes the $3$-poset that is obtained from $X$ by adjoining the relation $a<b$. If $X$ is an admissible $3$-poset, then the $3$-poset $X^b_a$ and $X^a_b$ are admissible and
\begin{equation}
I(X)=I(X^b_a)+I(X^a_b).
\end{equation}
\end{pro}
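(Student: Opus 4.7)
The plan is to follow the same shuffle-decomposition argument used to establish Proposition~\ref{prop:shufflrposet}, now adapted from the two-label setting $\{0,\varepsilon\}$ to the three-label setting $\{-1,0,1\}$. The proof really has only two ingredients: first, admissibility is preserved when one adjoins a new relation between non-comparable elements; second, the simplex $\Delta_X$ splits measure-theoretically as $\Delta_{X^b_a} \sqcup \Delta_{X^a_b}$. Combining these two facts with Fubini yields the claim immediately.

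For the admissibility step, I would observe that adjoining the relation $a<b$ cannot create new minimal or maximal elements: it can only remove $b$ from the set of minima (if $b$ was minimal in $X$) and remove $a$ from the set of maxima (if $a$ was maximal). Hence every minimal element of $X^b_a$ is already minimal in $X$, and likewise for maximal elements; since the label map $\delta$ is unchanged, the admissibility conditions $\delta(x)\neq \text{bad value at }0$ on minima and $\delta(x)\neq \text{bad value at }1$ on maxima carry over verbatim from $X$ to $X^b_a$. The same argument works for $X^a_b$.

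For the integral identity, non-comparability of $a$ and $b$ in $X$ means the defining inequalities of $\Delta_X$ impose no direct constraint between $t_a$ and $t_b$. Up to the Lebesgue-measure-zero diagonal $\{t_a = t_b\}$, the region $\Delta_X$ therefore decomposes as the disjoint union of the two subsets cut out by $t_a < t_b$ and $t_b < t_a$, which by definition of $\Delta_{(\cdot)}$ are exactly $\Delta_{X^b_a}$ and $\Delta_{X^a_b}$ (all other comparabilities of $X$ being inherited). Since the integrand $\prod_{x\in X}\omega_{\delta_X(x)}(t_x)$ is identical in all three regions, additivity of the integral gives $I(X) = I(X^b_a) + I(X^a_b)$.

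I do not anticipate any real obstacle: the only genuine analytic point is convergence of each of the two integrals on the right-hand side, which is automatic from the preserved admissibility since $\omega_{\pm 1}$ has only an integrable endpoint singularity at $t=1$ and $\omega_0$ only at $t=0$, with the boundary behavior controlled by the labels on minimal and maximal elements. The reasoning is formally identical to the $(r+1)$-poset proof of Proposition~\ref{prop:shufflrposet}; only the size of the label set has changed, and the shuffle decomposition of the simplex is insensitive to that.
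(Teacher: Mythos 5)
Your argument is correct and is precisely the standard simplex-decomposition proof that the paper leaves implicit (the proposition is stated there as obvious, exactly like its $(r+1)$-poset analogue, Proposition~\ref{prop:shufflrposet}): adjoining a relation between non-comparable elements only shrinks the sets of minimal and maximal elements, and $\Delta_X$ splits, up to the null set $\{t_a=t_b\}$, into $\Delta_{X^b_a}\sqcup\Delta_{X^a_b}$. One small slip in your closing remark: $\omega_{\pm 1}\sim dt/(1-t)$ is \emph{not} integrable at $t=1$ in isolation, so convergence of the two right-hand integrals is better justified by noting that the integrand is nonnegative and $\Delta_{X^b_a},\Delta_{X^a_b}\subset\Delta_X$, whence both integrals are bounded by $I(X)$; this does not affect the validity of your decomposition argument.
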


\begin{pro}
Let $n_0=n\in\N$, $\bfl=(l_1,\ldots,l_s)\in\N^s$ and $\bfeps:=(\varepsilon_1,\dotsc,\varepsilon_s)\in\{\pm 1\}^s$. Then
\begin{align}\label{F2-MMVs}
& I\bigg( \raisebox{4pt}{\begin{xy}
{(13,3) \ar @{}_{n}  (13,3)},
{(15,3) \ar @{} (15,2.5)},
{(15,1.5) \ar @{{o}-} (18,-3)},
{(18,-3) \ar @{{*}-} (18,-3)},
{(28,-1.2) \ar @{}_{\rightsquigarrow\hskip-1pt(\bfl,\bfeps)} (28,-1.2)},
\end{xy}}
\bigg)
=\sum_{n\geq n_1\geq \cdots\geq n_s\geq 1}   \prod_{i=1}^{s} \frac{\pi_i}{n_{i}^{l_{i}}}   \\
& + \sum_{j=0}^{s-1}\frac{\eps_{j+1}}{2}
\sum_{\eps=\pm1} \eps I\bigg(\hskip-2pt \raisebox{6pt}{\begin{xy}
{(18,-3) \ar @{{*}-} (18,-3)},
{(16,-5) \ar @{}_{\eps} (16,-5)},
{(18,-3) \ar @{{o}-} (21,0)},
{(21,0) \ar @{{o}.} (24,3)},
{(24,3) \ar @{{o}-{*}} (27,-2)},
{(19,-3) \ar @/_0.6mm/ @{-}_{\hskip-6pt l_{j+1}} (24,2)},
{(26.5,1.2) \ar @{}_{\rightsquigarrow\hskip-1pt(\ora\bfl_{\hskip-3pt j+2,s},\ora\bfeps_{\hskip-2pt j+2,s})} (26.5,1.2)},
\end{xy}}\hskip-2pt
\bigg) \sum_{n\geq n_1\geq \cdots \geq n_{j}\geq 1} \big(1-(-1)^{n_{j}}\big)  \prod_{i=1}^{j}  \frac{\pi_i}{n_i^{l_i}}, \label{equ:NeedModify}
\end{align}
where $\pi_i=1+\eps_i(-1)^{n_{i-1}+n_i}$ and the last sum in \eqref{equ:NeedModify} becomes $1-(-1)^n$ when $j=0$.
\end{pro}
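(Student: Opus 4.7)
The plan is to prove \eqref{F2-MMVs} by iterating the algebraic decomposition
\[
t^n w_{\varepsilon} \;=\; \tfrac{1-\varepsilon(-1)^n}{2}\,w_{-1} + \tfrac{1+\varepsilon(-1)^n}{2}\,w_{1} \;-\; \sum_{m=1}^{n}\bigl(1+\varepsilon(-1)^{n+m}\bigr)\,t^{m-1}dt,
\]
established in the proof of Theorem~\ref{thm-ItItmmv}. This is the MMV analogue of the MPL recurrence used in the proof of \eqref{F2-MMVs}'s cousin for $J$. Applied to the bottom pair $(\circ^n,\bullet_{\varepsilon_1})$ of the Hasse diagram, which together contribute $t^n w_{\varepsilon_1}(t)$ at the $\bullet_{\varepsilon_1}$-variable, the identity produces a boundary piece $I(\bullet_{\varepsilon_1(-1)^n}\rightsquigarrow(\bfl,\bfeps))$ (from the two $w_{\pm 1}$ terms) plus a polynomial sum indexed by $1\le n_1\le n$ carrying the weight $(1+\varepsilon_1(-1)^{n+n_1})\,t^{n_1-1}dt$ at the former $\bullet_{\varepsilon_1}$-site. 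Integrating this polynomial factor through the $l_1-1$ overlying $\omega_0$'s (each contributing a factor of $1/n_1$) yields an integral of exactly the same shape as the left-hand side of \eqref{F2-MMVs} but with parameters $(n,s,\bfl,\bfeps)\mapsto(n_1,s-1,\ora\bfl_{\hskip-3pt 2,s},\ora\bfeps_{\hskip-2pt 2,s})$ and overall prefactor $\pi_1/n_1^{l_1}$. This is the basic recurrence.

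Iterating the recurrence $s$ times, the fully-unwound polynomial branches accumulate into the main series $\sum_{n\ge n_1\ge\cdots\ge n_s\ge 1}\prod_i\pi_i/n_i^{l_i}$, while at each intermediate depth $j\in\{0,\dots,s-1\}$ the boundary piece contributes $\sum_{n\ge n_1\ge\cdots\ge n_j\ge 1}\prod_{i=1}^j\pi_i/n_i^{l_i}\cdot I(\bullet_{\varepsilon_{j+1}(-1)^{n_j}}\rightsquigarrow\cdots)$. To match the form stated in \eqref{F2-MMVs}, apply the elementary splitting
\[
I(\bullet_{\varepsilon_{j+1}(-1)^{n_j}}\rightsquigarrow\cdots) \;=\; I(\bullet_{\varepsilon_{j+1}}\rightsquigarrow\cdots) \;-\; \tfrac{\varepsilon_{j+1}}{2}\bigl(1-(-1)^{n_j}\bigr)\sum_{\varepsilon=\pm1}\varepsilon\,I(\bullet_{\varepsilon}\rightsquigarrow\cdots),
\]
which follows by case analysis on the parity of $n_j$ together with the identity $I(\bullet_{-\varepsilon_{j+1}}\cdots)=I(\bullet_{\varepsilon_{j+1}}\cdots)-\varepsilon_{j+1}\sum_{\varepsilon}\varepsilon\,I(\bullet_\varepsilon\cdots)$. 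The anti-symmetric half of this splitting supplies precisely the $j$-th correction term in \eqref{F2-MMVs}.

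The hard part will be the combinatorial bookkeeping in this last step: the alternating signs $(-1)^j$ generated by iterating the polynomial minus sign must combine with the ``unchanged-$\varepsilon_{j+1}$'' halves left over at each depth $j$ so that, upon summing over $j$ and including the terminal step, they cancel among themselves and leave only the positive main series $\sum_{n\ge n_1\ge\cdots\ge n_s}\prod_i\pi_i/n_i^{l_i}$. I expect this telescoping to go through by a straightforward induction on $s$, exactly paralleling the corresponding step in the proof of the $J$-version and exploiting the multiplicative parity structure of the weights $\pi_i=1+\varepsilon_i(-1)^{n_{i-1}+n_i}$; once it is in place, formula \eqref{F2-MMVs} follows.
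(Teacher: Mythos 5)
There is a genuine gap, and it sits at the very first step of your plan. In the poset on the left-hand side of \eqref{F2-MMVs} the vertex $\circ^n$ is grafted \emph{above} the first bullet $\bullet_{\eps_1}$ of the zigzag (this orientation is forced by the way the proposition is used in the proof of Theorem~\ref{thm-mmvsPoset}: the lower chain is integrated out there and its contribution $1-t^n$ must be re-attached over the next bullet). Hence integrating out the $\circ^n$-variable produces $\int_{t}^{1}u^{n-1}\,du=(1-t^{n})/n$ at the $\bullet_{\eps_1}$-variable, and the identity one must iterate is
\begin{equation*}
(1-t^n)\,w_{\eps}=\sum_{m=1}^{n}\bigl(1+\eps(-1)^{n+m}\bigr)t^{m-1}dt-\eps\,\frac{1-(-1)^{n}}{2}\,(w_{-1}-w_{1}),
\end{equation*}
which in one stroke gives a recurrence whose $s$-fold iteration is exactly \eqref{F2-MMVs}: the harmonic branch enters with a plus sign and the boundary term already carries the factor $1-(-1)^{n}$, so no cancellation is needed. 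Your plan instead feeds the pair $(\circ^n,\bullet_{\eps_1})$ into the identity for $t^{n}w_{\eps}$ taken from the proof of Theorem~\ref{thm-ItItmmv}; that identity is correct, but it is the reduction for the totally ordered chain of \eqref{equ:Intx-MMVs}, where $t^{n-1}dt$ carries the \emph{smallest} variable. Applied to the present poset it computes a different integral.

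Because of this, the step you defer as ``combinatorial bookkeeping'' cannot be closed. Iterating your recurrence reproduces the alternating chain-type expansion of Theorem~\ref{thm-ItItmmv} (signs $(-1)^{j}$, boundary pieces $I(\bullet_{\eps_{j+1}(-1)^{n_j}}\rightsquigarrow\cdots)$ present for every parity of $n_j$), and the hoped-for telescoping of the ``unchanged-$\eps_{j+1}$'' halves against the signs $(-1)^{j}$ is not bookkeeping: it amounts to a nontrivial $\ze$-versus-$\ze^{\star}$ identity, and under your reading of the diagram the claimed formula is simply false, so no rearrangement can rescue it. Concretely, take $s=1$, $l_1=2$, $\eps_1=1$ and $n$ even: with the $\circ^n$ placed below the bullet one gets (up to the same overall normalization by $n$ on both readings) $\frac12\bigl(\zeta(2)-\sum_{m\le n/2}m^{-2}\bigr)$, whereas the right-hand side of \eqref{F2-MMVs} equals $\frac12\sum_{m\le n/2}m^{-2}$; with the correct $1-t^{n}$ reduction the two sides agree. (The paper's conventions invite this confusion---the stated orientation of $\Delta_X$ and the suppressed factor $n$ attached to $\circ^n$ are easy to misread---but as proposed your argument does not establish the proposition.) Your ``elementary splitting'' of $I(\bullet_{\eps_{j+1}(-1)^{n_j}}\cdots)$ is a correct identity, yet it only repackages the discrepancy between the chain and zigzag evaluations; it does not remove it.
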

\begin{proof}
The proof is done straightforwardly by computing the multiple integral on the left-hand
side of \eqref{F2-MMVs} as a repeated integral from left to right". Noting that
\begin{equation*}
w_\varepsilon=\frac{dt}{1-t}-\varepsilon \frac{dt}{1+t}\quad (\varepsilon\in\{\pm 1\}),
\end{equation*}
by an elementary calculation, we see that
\begin{equation*}
(1-t^n)w_\varepsilon=\sum_{m=1}^n (1+\varepsilon (-1)^{n+m})t^{m-1}dt-\varepsilon \frac{1-(-1)^n}{2}(w_{-1}-w_1).
\end{equation*}
This quickly leads to the recurrence relation
\begin{multline*}
 I\bigg( \raisebox{4pt}{\begin{xy}
{(13,3) \ar @{}_{n}  (13,3)},
{(15,3) \ar @{} (15,2.5)},
{(15,1.5) \ar @{{o}-} (18,-3)},
{(18,-3) \ar @{{*}-} (18,-3)},
{(28,-1.2) \ar @{}_{\rightsquigarrow\hskip-1pt(\bfl,\bfeps)} (28,-1.2)},
\end{xy}}
\bigg)
=\sum_{n\geq n_1\geq 1} \frac{1+\eps_1(-1)^{n+n_1}}{n_1^{l_1}}
 I\bigg( \raisebox{4pt}{\begin{xy}
{(13,3) \ar @{}_{n_1}  (13,3)},
{(15,3) \ar @{} (15,2.5)},
{(15,1.5) \ar @{{o}-} (18,-3)},
{(18,-3) \ar @{{*}-} (18,-3)},
{(36,-0.7) \ar @{}_{\rightsquigarrow\hskip-1pt(\ora\bfl_{\hskip-3pt 2,s},\ora\bfeps_{\hskip-2pt 2,s})} (36,-0.7)},
\end{xy}}
\bigg)
\nonumber\\
-\eps_1\frac{1-(-1)^n}{2}\bigg\{ I\bigg( \raisebox{4pt}{\begin{xy}
{(18,-3) \ar @{{*}-} (18,-3)},
{(16,-5) \ar @{}_{-1} (16,-5)},
{(18,-3) \ar @{{o}-} (21,0)},
{(21,0) \ar @{{o}.} (24,3)},
{(24,3) \ar @{{o}-{*}} (27,-3)},
{(19,-3) \ar @/_0.6mm/ @{-}_{l_1} (24,2)},
{(26.8,0.2) \ar @{}_{\rightsquigarrow\hskip-1pt(\ora\bfl_{\hskip-3pt 2,s},\ora\bfeps_{\hskip-2pt 2,s})} (26.8,0.2)},
\end{xy}}
\bigg)-I\bigg( \raisebox{4pt}{\begin{xy}
{(18,-3) \ar @{{*}-} (18,-3)},
{(16,-5) \ar @{}_{1} (16,-5)},
{(18,-3) \ar @{{o}-} (21,0)},
{(21,0) \ar @{{o}.} (24,3)},
{(24,3) \ar @{{o}-{*}} (27,-3)},
{(19,-3) \ar @/_0.6mm/ @{-}_{l_1} (24,2)},
{(26.8,0.2) \ar @{}_{\rightsquigarrow\hskip-1pt(\ora\bfl_{\hskip-3pt 2,s},\ora\bfeps_{\hskip-2pt 2,s})} (26.8,0.2)},
\end{xy}}
\bigg)\bigg\}.
\end{multline*}
Repeating the above recurrence relation $r-1$ times, we may easily deduce the desired result.
\end{proof}

\begin{thm}\label{thm-mmvsPoset}
Let $\bfk=(k_1,\dotsc,k_r)\in\N^r$, $\bfl=(l_1,\dotsc,l_s)\in\N^s$, $\bfet:=(\eta_1,\dotsc,\eta_r)\in\{\pm1\}^r$ and $\bfeps:=(\varepsilon_2,\dotsc,\varepsilon_{s})\in\{\pm1\}^{s-1}$. Setting $\underline{\bfet}=\eta_1\cdots\eta_r$, we have
\begin{align}\label{Int-Ser-MMVs}
&I\left( \raisebox{-6pt}{\begin{xy}
{(12,-3) \ar @{{}.} (18,3)},
{(12,-3) \ar @{{*}} (12,-3)},
{(18,3) \ar @{{o}-} (21,6)},
{(21,6) \ar @{{o}.} (24,9)},
{(24,9) \ar @{{o}-{*}} (27,3)},
{(12,-3) \ar @/_0.6mm/ @{}_{(\bfk,\bfet)} (18,3)},
{(18,4) \ar @/^0.6mm/ @{-}^{l_1} (23,9)},
{(26.5,6.5) \ar @{}_{\rightsquigarrow\hskip-1pt(\ora\bfl_{\hskip-3pt 2,s},\ora\bfeps_{\hskip-2pt 2,s})} (26.5,6.5)},
\end{xy}}
\right)
=2M((\bfk;\bfp(\bfet))\circledast (\bfl;0,\underline{\bfet} \bfr(\bfeps))^\star)\nonumber\\
&\quad-\frac{\eps_2(1-\underline{\bfet})}{2}M\left(k_1+l_1,\ora\bfk_{\hskip-2pt 2,r};-1,\bfp(\ora\bfet_{\hskip-2pt 2,r})\right)
\sum_{\eps=\pm1} \eps I\bigg( \raisebox{4pt}{\begin{xy}
{(18,-3) \ar @{{*}-} (18,-3)},
{(16,-5) \ar @{}_{\eps} (16,-5)},
{(18,-3) \ar @{{o}-} (21,0)},
{(21,0) \ar @{{o}.} (24,3)},
{(24,3) \ar @{{o}-{*}} (27,-1)},
{(19,-3) \ar @/_0.6mm/ @{-}_{l_2} (24,2)},
{(26.8,2.2) \ar @{}_{\rightsquigarrow\hskip-1pt(\ora\bfl_{\hskip-3pt 3,s},\ora\bfeps_{\hskip-2pt 3,s})} (26.8,2.2)},
\end{xy}}
\bigg) \nonumber\\
&\quad-\sum_{j=2}^{s-1}\eps_{j+1}(1-\underline{\bfet} \eps_2\cdots\eps_{j})
M\left( \big (\bfk;\bfp(\bfet)\big )\circledast \bfgl_{j}^\star\right)
 \sum_{\eps=\pm1} \eps   I\bigg(\hskip-2pt \raisebox{6pt}{\begin{xy}
{(18,-3) \ar @{{*}-} (18,-3)},
{(16,-5) \ar @{}_{\eps} (16,-5)},
{(18,-3) \ar @{{o}-} (21,0)},
{(21,0) \ar @{{o}.} (24,3)},
{(24,3) \ar @{{o}-{*}} (27,-2)},
{(19,-3) \ar @/_0.6mm/ @{-}_{\hskip-6pt l_{j+1}} (24,2)},
{(26.5,1.2) \ar @{}_{\rightsquigarrow\hskip-1pt(\ora\bfl_{\hskip-3pt j+2,s},\ora\bfeps_{\hskip-2pt j+2,s})} (26.5,1.2)},
\end{xy}}\hskip-2pt
\bigg),
\end{align}
where $\bfgl_{j}=\big(\ora\bfl_{\hskip-3pt j};0,\underline{\bfet}\bfr(\ora\bfeps_{\hskip-3pt j-1}),-1\big)$.
\end{thm}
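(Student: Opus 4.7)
The proof parallels Theorem \ref{thm-mpls}, with the MPL iterated integral \eqref{equ:glInteratedInt} replaced by the MMV integral representation in \eqref{defn-mmvs}. First, I would peel off the bottom $(\bfk,\bfet)$ chain together with the $l_1$ tail, integrating from bottom to top. By the definition of MMV via iterated integrals, this expresses the left-hand side as
\begin{equation*}
\text{LHS of }\eqref{Int-Ser-MMVs}
= \sum_{n=1}^{\infty} \frac{M_{n-1}\bigl(\ora\bfk_{\hskip-2pt 2,r};\bfp(\ora\bfet_{\hskip-2pt 2,r})\bigr)\bigl(1+\underline{\bfet}(-1)^n\bigr)}{n^{k_1+l_1}}\, I_n,
\end{equation*}
where $I_n$ is the remaining integral over the sub-poset obtained by attaching the tail $\rightsquigarrow(\ora\bfl_{\hskip-3pt 2,s},\ora\bfeps_{\hskip-2pt 2,s})$ to a $\circ^{n}\bullet$ base (the index $l_1$ has been absorbed into the denominator $n^{k_1+l_1}$).

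Next, I would apply \eqref{F2-MMVs}, with $(\bfl,\bfeps)$ replaced by $(\ora\bfl_{\hskip-3pt 2,s},\ora\bfeps_{\hskip-2pt 2,s})$, to expand $I_n$ as a multiple $t$-star-type harmonic sum plus $s-1$ correction terms, each of the form $\tfrac{\eps_{j+1}}{2}\sum_{\eps=\pm 1}\eps\, I(\cdot)$ multiplied by a partial MMV-harmonic sum in $n$. After substituting and interchanging summations, the principal (series) term combines with the prefactor $(1+\underline{\bfet}(-1)^n)$ to yield, through the definition \eqref{equ:convMMVs}, the leading term $2M\bigl((\bfk;\bfp(\bfet))\circledast(\bfl;0,\underline{\bfet}\bfr(\bfeps))^\star\bigr)$. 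The $j=0$ correction produces the $\eps_2$ term involving $M(k_1+l_1,\ora\bfk_{\hskip-2pt 2,r};-1,\bfp(\ora\bfet_{\hskip-2pt 2,r}))$, since peeling off the single node $\circ^n\bullet$ then summing in $n$ gives an MMV whose first argument is $k_1+l_1$ with sign $-1$. Each $j\ge 1$ correction becomes $M\bigl((\bfk;\bfp(\bfet))\circledast\bfgl_j^\star\bigr)$ times a signed integral over the remaining tail, producing the final summation in \eqref{Int-Ser-MMVs}.

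The main obstacle is bookkeeping the parity factors. For each $j$, the prefactor $(1+\underline{\bfet}(-1)^n)\bigl(1-(-1)^{n_j}\bigr)$ must combine with $\prod_{i=1}^{j}(1+\eps_i(-1)^{n_{i-1}+n_i})$ to match the convoluted-MMV parity factor corresponding to the sign vector $\bigl(\ora\bfl_{\hskip-3pt j};0,\underline{\bfet}\bfr(\ora\bfeps_{\hskip-3pt j-1}),-1\bigr)$. This is handled exactly by the telescoping identity already invoked in the proof of Corollary \ref{cor-mmvs}, namely
\begin{equation*}
\prod_{i=1}^{j}\bigl(1+\eps_i(-1)^{n_{i-1}+n_i}\bigr)=\prod_{i=1}^{j}\bigl(1+\eps_1\cdots\eps_i(-1)^{n+n_i}\bigr),
\end{equation*}
after appropriate relabeling. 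Pairing this with $(1+\underline{\bfet}(-1)^n)$ and summing the two $\eps=\pm 1$ contributions, one verifies that the residual combinatorial factor is precisely $\bigl(1-\underline{\bfet}\eps_2\cdots\eps_j\bigr)$, which matches the sign pattern stated in \eqref{Int-Ser-MMVs} and completes the identification with the right-hand side.
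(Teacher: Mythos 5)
Your proposal follows exactly the paper's own route: expand the poset integral as $\sum_{n\ge1} n^{-k_1-l_1}M_{n-1}\big(\ora\bfk_{\hskip-2pt 2,r};\bfp(\ora\bfet_{\hskip-2pt 2,r})\big)\big(1+\underline{\bfet}(-1)^n\big)$ times the integral with base $\circ^n\bullet$ and tail $(\ora\bfl_{\hskip-3pt 2,s},\ora\bfeps_{\hskip-2pt 2,s})$, apply \eqref{F2-MMVs}, and use the telescoping parity identity to convert the factors $1+\eps_i(-1)^{n_{i-1}+n_i}$ into the sign vectors appearing in the convoluted MMVs. Your bookkeeping of the parity factors (including the emergence of $1-\underline{\bfet}\eps_2\cdots\eps_j$ and the identification of the $j=0$ term with $M(k_1+l_1,\ora\bfk_{\hskip-2pt 2,r};-1,\bfp(\ora\bfet_{\hskip-2pt 2,r}))$) matches the paper's "elementary calculation," so the proof is correct and essentially identical in approach.
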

\begin{proof} According to the iterated integral expressions \eqref{defn-mmvs} of $M$-MPL, we can find that
\begin{align*}
&I\left( \raisebox{-6pt}{\begin{xy}
{(12,-3) \ar @{{}.} (18,3)},
{(12,-3) \ar @{{*}} (12,-3)},
{(18,3) \ar @{{o}-} (21,6)},
{(21,6) \ar @{{o}.} (24,9)},
{(24,9) \ar @{{o}-{*}} (27,3)},
{(12,-3) \ar @/_0.6mm/ @{}_{(\bfk,\bfet)} (18,3)},
{(18,4) \ar @/^0.6mm/ @{-}^{l_1} (23,9)},
{(26.5,6.5) \ar @{}_{\rightsquigarrow\hskip-1pt(\ora\bfl_{\hskip-3pt 2,s},\ora\bfeps_{\hskip-2pt 2,s})} (26.5,6.5)},
\end{xy}}
\right)
=\su \frac{M_{n-1}\Big(\ora\bfk_{\hskip-2pt 2,r};\bfp(\ora\bfet_{\hskip-2pt 2,r})\Big)}{n^{k_1+l_1}}(1+\underline{\bfet} (-1)^n)
I\bigg( \raisebox{4pt}{\begin{xy}
{(13,3) \ar @{}_{n}  (13,3)},
{(15,3) \ar @{} (15,2.5)},
{(15,1.5) \ar @{{o}-} (18,-3)},
{(18,-3) \ar @{{*}-} (18,-3)},
{(36,-0.7) \ar @{}_{\rightsquigarrow\hskip-1pt(\ora\bfl_{\hskip-3pt 2,s},\ora\bfeps_{\hskip-2pt 2,s})} (36,-0.7)},
\end{xy}}
\bigg).
\end{align*}
Then applying \eqref{F2-MMVs} and noting the fact that
\begin{align*}
&(1+\eta (-1)^n)(1+\eps_1(-1)^{n+n_1})(1+\eps_2(-1)^{n_1+n_2})\cdots (1+\eps_r(-1)^{n_{r-1}+n_r})\\
&=(1+\eta (-1)^n)(1+\eta\eps_1(-1)^{n_1})(1+\eta\eps_1\eps_2(-1)^{n_2})\cdots (1+\eta\eps_1\eps_2\cdots\eps_r(-1)^{n_r}),
\end{align*}
with an elementary calculation, we obtain the desired evaluation.
\end{proof}

Let $s=2$ and $\eps_2=\eps$ in Theorem~\ref{thm-mmvsPoset}.  We see that
\begin{align} \label{Int-Ser-MMVs2}
&I\left( \raisebox{-6pt}{\begin{xy}
{(12,-3) \ar @{{*}.} (12,-3)},
{(12,-3) \ar @{{o}.} (18,3)},
{(18,3) \ar @{{o}-} (21,6)},
{(21,6) \ar @{{o}.} (24,9)},
{(24,9) \ar @{{o}-} (27,3)},
{(27,3) \ar @{{*}-} (30,6)},
{(25,3) \ar @{}_{\varepsilon} (25,3)},
{(30,6) \ar @{{o}.} (33,9)},
{(33,9) \ar @{{o}.} (33,9)},
{(12,-3) \ar @/_0.6mm/ @{}_{(\bfk,\bfet)} (18,3)},
{(18,4) \ar @/^1mm/ @{-}^{l_1} (23,9)},
{(28,3) \ar @/_0.6mm/ @{-}_{l_{2}} (33,8)},
\end{xy}}
\right)=2M((\bfk;\bfp(\bfet))\circledast(l_1,l_2;0,\eps\underline{\bfet})^\star)\nonumber\\
&\qquad\qquad\quad\quad\quad-\frac{\eps(1-\underline{\bfet})}{2}M\Big(k_1+l_1,\ora\bfk_{\hskip-2pt 2,r};-1,\bfp(\ora\bfet_{\hskip-2pt 2,r})\Big)
\bigg\{ I\bigg( \raisebox{4pt}{\begin{xy}
{(18,-3) \ar @{{*}-} (18,-3)},
{(16,-5) \ar @{}_{-1} (16,-5)},
{(18,-3) \ar @{{o}-} (21,0)},
{(21,0) \ar @{{o}.} (24,3)},
{(24,3) \ar @{{o}-} (24,3)},
{(19,-3) \ar @/_0.6mm/ @{-}_{l_2} (24,2)},
\end{xy}}
\bigg)-I\bigg( \raisebox{4pt}{\begin{xy}
{(18,-3) \ar @{{*}-} (18,-3)},
{(16,-5) \ar @{}_{1} (16,-5)},
{(18,-3) \ar @{{o}-} (21,0)},
{(21,0) \ar @{{o}.} (24,3)},
{(24,3) \ar @{{o}-} (24,3)},
{(19,-3) \ar @/_0.6mm/ @{-}_{l_2} (24,2)},
\end{xy}}
\bigg)\bigg\}\nonumber\\
&=2M((\bfk;\bfp(\bfet))\circledast(l_1,l_2;0,\eps\underline{\bfet})^\star)
+\eps(1-\underline{\bfet})M\Big(k_1+l_1,\ora\bfk_{\hskip-2pt 2,r};-1,\bfp(\ora\bfet_{\hskip-2pt 2,r})\Big)\ze({\bar l}_2),
\end{align}
where we used the fact that
\begin{align*}
\ze({\bar l}_2)=I\bigg( \raisebox{4pt}{\begin{xy}
{(18,-3) \ar @{{*}-} (18,-3)},
{(16,-5) \ar @{}_{-1} (16,-5)},
{(18,-3) \ar @{{o}-} (21,0)},
{(21,0) \ar @{{o}.} (24,3)},
{(24,3) \ar @{{o}-} (24,3)},
{(19,-3) \ar @/_0.6mm/ @{-}_{l_2} (24,2)},
\end{xy}}
\bigg)-I\bigg( \raisebox{4pt}{\begin{xy}
{(18,-3) \ar @{{*}-} (18,-3)},
{(16,-5) \ar @{}_{1} (16,-5)},
{(18,-3) \ar @{{o}-} (21,0)},
{(21,0) \ar @{{o}.} (24,3)},
{(24,3) \ar @{{o}-} (24,3)},
{(19,-3) \ar @/_0.6mm/ @{-}_{l_2} (24,2)},
\end{xy}}\bigg).
\end{align*}

As an example, setting $\bfk=(2)$ and $(l_1,l_2)=(1,2)$ in \eqref{Int-Ser-MMVs2} further yields
\begin{align*}
&2M(3,2;\eps\eta,\eta)+3M(4,1;\eps\eta,\eta)+3M(4,1;\eta\eps,\eps)+M(3,2;\eta\eps,\eps)\\
&=M(3,2;\eta,\eps\eta)+(1+\eps)M(5;\eta)+\eps(1-\eta)M(3;-1)\ze(\bar 2).
\end{align*}
If setting $\eta_1=\cdots=\eta_r=\eps=-1$ in \eqref{Int-Ser-MMVs2}, we obtain the well-known ``integral-series" identity involving Kaneko--Tsumura's multiple $T$-values \cite[Theorem 4.5]{XuZhao2020a}. For the detailed definition and introduction of multiple $T$-values, please see \cite{KTA2019}.

\bigskip
{\bf Acknowledgments.}  Ce Xu is supported by the National Natural Science Foundation of China (Grant No. 12101008), the Natural Science Foundation of Anhui Province (Grant No. 2108085QA01) and the University Natural Science Research Project of Anhui Province (Grant No. KJ2020A0057). Jianqiang Zhao is supported by the Jacobs Prize from The Bishop's School.

\end{document}